\theoremstyle{plain}
\newtheorem{thm}{Theorem}[section]
\newtheorem{cor}[thm]{Corollary}
\newtheorem{lem}[thm]{Lemma}
\newtheorem{prp}[thm]{Proposition}
\theoremstyle{remark}
\newtheorem{expl}[thm]{Example}
\numberwithin{equation}{section}
\crefname{thm}{theorem}{theorems}
\crefname{lem}{lemma}{lemmata}
\crefname{prp}{proposition}{propositions}
\crefname{cor}{corollary}{corollaries}
\newcommand{\mc}{\mathcal}
\newcommand{\N}{\mathbb N}
\newcommand{\Z}{\mathbb Z}
\newcommand{\Q}{\mathbb Q}
\newcommand{\R}{\mathbb R}
\newcommand{\F}{\mathbb F}
\newcommand{\co}{\mathcal O}
\DeclareMathOperator{\cls}{cls}
\DeclareMathOperator{\Tr}{Tr}
\DeclareMathOperator{\Gal}{Gal}
\DeclareMathOperator{\NN}{N}
\DeclareMathOperator{\sgn}{sgn}
\DeclareMathOperator{\sign}{sign}
\DeclareMathOperator{\sgnrk}{sgnrk}
\DeclareMathOperator{\Conv}{Conv}
\DeclareMathOperator{\ID}{ID}
\DeclareMathOperator{\IV}{IV}
\DeclareMathOperator{\Span}{Span}
\DeclareMathOperator{\Vol}{Vol}
\newcommand{\bea}{\begin{enumerate}[label=(\alph*)]}
\newcommand{\ben}{\begin{enumerate}[label=(\arabic*)]} 
\newcommand{\ber}{\begin{enumerate}[label=(\roman*)]}
\newcommand{\ee}{\end{enumerate}} 
\newcommand{\cb}[1]{\left\{{#1}\right\}}
\newcommand{\cbm}[2]{\left\{{#1}\;\middle|\;{#2}\right\}}
\newcommand{\flr}[1]{\left\lfloor{#1}\right\rfloor}
\newcommand{\ol}[1]{\overline{#1}}
\newcommand{\pb}[1]{\left\langle{#1}\right\rangle}
\newcommand{\pbs}[1]{\langle{#1}\rangle}
\newcommand{\rb}[1]{\left({#1}\right)}
\newcommand{\sqb}[1]{\left[{#1}\right]}
\title{Sails for universal quadratic forms}
\author{Vítězslav Kala}
\author{Siu Hang Man}
\address{Charles University, Faculty of Mathematics and Physics, Department of Algebra, Sokolov\-ská 83, 186~75 Praha~8, Czech Republic}
\email[V.~Kala]{vitezslav.kala@matfyz.cuni.cz}
\email[S.~H.~Man]{shman@karlin.mff.cuni.cz}
\thanks{The research was supported by {Czech Science Foundation} grant 21-00420M (Kala, Man), and by {Charles University} programme PRIMUS/24/SCI/010 (Man).}
\begin{document}
\begin{abstract}
We establish a new connection between sails, a key notion in the geometric theory of generalised continued fractions, and arithmetic of totally real number fields, specifically, universal quadratic forms and additively indecomposable integers. 
Our main application is to biquadratic fields, for which we show that if their signature rank is at least 3, then ranks of universal forms and numbers of indecomposables grow as a power of the discriminant. We also construct a family in which these numbers grow only logarithmically.
\end{abstract}

\makeatletter
\@namedef{subjclassname@2020}{%
  \textup{2020} Mathematics Subject Classification}
\makeatother
\keywords{universal quadratic form, totally real number field, biquadratic number field, generalised continued fraction, indecomposable algebraic integer}
\subjclass[2020]{11E12, 11E20, 11H06, 11J70, 11R04, 11R16, 11R80}

\maketitle

\section{Introduction}

Quadratic forms and the integers represented by them have been influential from the dawn of number theory till today. 
Their study used and often initiated the development of a broad range of tools, from elementary and combinatorial (a modern example being escalations used in the proof of 290-Theorem), through algebraic (such as the classical connection to class groups of real quadratic number fields or local-global principles) to analytic (theta functions, circle method, or Siegel's mass formula). In this paper, we aim to initiate the use of \textit{sails} in the study of universal quadratic forms. However, let us discuss some background before turning to sails and our new connection.

\medskip

Over rational integers, 290-Theorem of Bhargava--Hanke \cite{BH} completely characterised positive definite quadratic forms that are \textit{universal} in the sense that they represent all positive integers: Their universality is equivalent to the representability of all the integers $1,2,3,\dots, 290$.

The situation over rings of integers $\co_K$ in number fields $K$ is much more complicated.
We call a quadratic form $Q$ over $\co_K$ \textit{totally positive definite} if all its values are totally positive, except for $Q(0,\dots,0)=0$, and \textit{universal} if it moreover represents all elements of $\co_K^+$, i.e., all totally positive integers.
In the 1940's, Siegel \cite{Si2} established that $\Q$ and $\Q(\sqrt 5)$ are the only totally real number field over which the sum of any number of squares is universal; four (Lagrange, 1770) and three (Maa{\ss} \cite{Ma}) squares  suffice, respectively.
This was the first suggestion that universal forms over totally real number fields may be quite scarce.
In contrast, when the field is not totally real, then the sum of squares is universal (in the sense that it represents all elements with no negative real conjugates) if and only if the field has odd discriminant \cite{Si2}. More generally, when the quadratic form is not totally positive definite, then local methods give much more information, e.g., see \cite{EH2, HHX}.

While the asymptotic local-global principle of Hsia--Kitaoka--Kneser \cite{HKK1978} implies that universal quadratic forms exist over every (totally real) number field, it does not provide any information about their ranks. Still unproven is even the influential Kitaoka's conjecture from the 1990's that there are only finitely many totally real number fields with a ternary universal quadratic form, despite interesting results for quadratic \cite{CKR} and biquadratic \cite{KTZ2020} fields, for fields of odd degree \cite{EK1}, and a weaker statement in every fixed degree \cite{KY3}. Further, Kim--Kim--Park \cite{Ki2,KKP} studied real quadratic fields with universal forms of ranks up to eight.

Starting in 2015, Blomer--Kala \cite{BK1, BK2,Ka1} pioneered the use of two new tools in the area: additively indecomposable integers and continued fractions. An element $\alpha\in\co_K^+$ is \textit{indecomposable} if it does not decompose as $\alpha=\beta+\gamma$ for any $\beta,\gamma\in\co_K^+$. 
As already observed by Siegel \cite{Si2}, such elements are hard to represent by a quadratic form, and in real quadratic fields, Dress--Scharlau \cite{DS} showed  that they precisely correspond to semiconvergents of periodic continued fractions.
Together, these notions can be used to establish the surprising result \cite{BK1} that, for every $r$, there are real quadratic fields without a universal quadratic form of rank at most $r$.
Along with the study of small trace elements in the codifferent \cite{KT2023,Ya}, Kala--Yatsyna--Żmija \cite{KYZp2023} recently even showed that, when ordered by discriminant, density 1 of real quadratic fields do not admit a universal quadratic form of rank at most $r$.

\medskip

Also over number fields of higher degrees, the importance of indecomposables was quite clear, but the absence of suitable generalised continued fractions posed a significant obstacle, in spite of some interesting partial results \cite{Ka4,KT2023,KL,KTZ2020,Man,Ya}.

To understand the difficulty, note that while there are various  generalizations of continued fractions, in particular multidimensional ones such as the Jacobi--Perron algorithm (JPA,  \cite{jacobi, Perron}), they are still not well understood. In particular, a major open question is Hermite's problem from 1839 that asks about
the existence of periodic expansions for algebraic numbers of degree $>2$. This is still wide open, even in the cubic case, where computational evidence suggests that the JPA expansion of $\sqrt[3]n$ is typically not periodic \cite{Voutier}. However, Karpenkov \cite{Karp} recently constructed a modification of JPA that has periodic expansions of totally real cubic vectors. Nevertheless, these multidimensional continued fraction algorithms are still quite mysterious, making it difficult to use for our purposes (see \cite{KST}).

In this paper, we break through this barrier by connecting universal forms and indecomposables to the \textit{geometric approach to generalised continued fractions}, first considered by Klein in 1895 \cite{Klein}, and recently revived by Arnold and his colleagues, including Karpenkov \cite{Karpenkov2022}.

Let $K$ be a totally real number field of degree $n$ with ring of integers $\mc O_K$ and unit group $\mc O_K^\times$.
The Minkowski embedding $i:K\hookrightarrow \R^n$ is then defined by the $n$ embeddings $\tau_i: K\hookrightarrow \R$ and the image $\Lambda := i(\mc O_K) \subseteq \R^n$ is a lattice of rank $n$. As we are interested in totally positive elements, we consider the set of lattice points $\Lambda^+ := \Lambda \cap \R_+^n$ in the positive octant. The \emph{Klein polyhedron} is the convex hull $\mc K_K := \Conv(\Lambda^+)$, and its boundary $\mc S_K := \partial\mc K_K$ is called the \emph{sail} associated to $\Lambda^+$ (or $\mc O_K^+$, or just $K$). 

Informally, the lattice points lying on the sail are the points closest to the coordinate hyperplanes, in a suitable sense, and one may thus suspect that they may be related to indecomposables. We indeed show in Theorem \ref{thm:si} that this is the case: $\alpha$ is indecomposable if it lies on the sail, i.e., $i(\alpha)\in \mc S_K$. However, the other implication holds in the quadratic case $n=2$, but not in general.

Nevertheless, we propound that understanding sails is critical for the study of indecomposables (and, in turn, universal forms) and that, conversely, indecomposables provide a natural refinement of sails that should be of interest also for the continued fraction community.

\medskip

We start this paper by describing the face structure of a sail $\mc S_K$. Thanks to the action by multiplication by totally positive units, there is a finite set of faces $A_i$ whose translates give the whole sail, see \eqref{eq:decomp}; topologically, these faces $A_i$ give a \textit{torus decomposition} $[0,1]^n=\bigcup_i A_i$.
As the corresponding cones cover the totally positive octant \eqref{eq:shint}, each indecomposable correspondingly lies ``above'' one of the faces $A_i$. Suitable counting of lattice points then allows us to estimate the total number of indecomposables (Theorem \ref{thm:ii}). This is particularly nice and useful for cubic fields where Theorem \ref{thm:cii} gives an equality that allows us to significantly simplify previous arguments (due to Kala--Tinková \cite{KT2023, Ti1}) determining indecomposables in families such as Shanks' simplest cubic fields \cite{Shanks1974}.

\medskip

For  $\alpha \in K^\times$, we define its \emph{signature} $\sgn(\alpha) \in \{+,-\}^n$ as the $n$-tuple of signs of $\tau_i(\alpha)$. It is often convenient to view $\sgn(\alpha)$ as a vector in the \emph{signature space} $\F_2^n$.
The collection of signatures $\sgn(\alpha)$ for $\alpha \in \mc O_K^\times$ is the \emph{unit signature group} of $K$, and its dimension (as a subspace of $\F_2^n$) is called the \emph{unit signature rank} of $K$, denoted by $\sgnrk(K)$. As $-1\in \mc O_K^\times$, we have $1\leq \sgnrk(K)\leq n$.
Available unit signatures control symmetries between various octants of the Minkowski space, and so have key influence on the structure of the sail, as well as indecomposables and universal forms. For example, Kitaoka's conjecture (for classical quadratic forms) is non-trivial only when $\sgnrk(K)=n-1,n$.

In our paper, we then turn to the case of biquadratic fields, i.e., extensions $K=\Q(\sqrt {D_1}, \sqrt {D_2})$ of degree $n=4$ (with squarefree $D_1,D_2\in\Z_{>1}$). Their great advantage is that they contain the quadratic subfields $\Q(\sqrt {D_1}), \Q(\sqrt {D_2}), \Q(\sqrt {D_1D_2})$. While the indecomposables for these subfields need not remain indecomposable in $K$ (although they often do, see \cite{CL+, KTZ2020, Man}), they still lie close to the sail (as they have small traces). Crucially for us, quadratic fields $F$ always contain $\gg \Delta_F^{1/2}$ indecomposables in the quadrant of signature $(+,-)$, and,
when the biquadratic field has $\sgnrk(K)=3,4$, there are enough units to make some of these indecomposables totally positive in $K$. 

These considerations imply the following theorem, for which we denote by $R_{\cls}(K)$ the minimal rank of a universal quadratic lattice that is classical (see \Cref{sect:lattices} below; similarly $R(K)$ denotes the minimal rank without the classical assumption), and by $\iota(K)$ the number of indecomposables modulo multiplication by totally positive units.

\begin{thm}\label{thm:usr}
Let $K$ be a totally real biquadratic field with $\sgnrk(K) \ge 3$. Then we have $R_{\cls}(K), \iota(K) \gg \Delta_K^{1/12}$ and $R(K) \gg \Delta_K^{1/42}$ for some explicitly computable constants, where $\Delta_K$ denotes the discriminant of $K$.
\end{thm}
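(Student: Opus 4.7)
The strategy is to exhibit many pairwise inequivalent totally positive indecomposables in $K$ by importing them from a quadratic subfield and correcting their signatures using $K$-units. Let $F_1,F_2,F_3$ denote the three quadratic subfields of $K$; since $\Gal(K/\Q)\cong(\Z/2\Z)^2$, the conductor--discriminant formula yields $\Delta_K=\Delta_{F_1}\Delta_{F_2}\Delta_{F_3}$. A short linear algebra argument in $\F_2^4$ shows that any subspace $U\subseteq\F_2^4$ of dimension at least $3$ containing the all-ones vector $(1,1,1,1)$ must meet the set $\{(1,1,0,0),(1,0,1,0),(1,0,0,1)\}$ of ``mixed'' $K$-signatures attached to the three subfields: ruling out all three forces the defining linear functional of $U$ to have odd weight, contradicting the even-weight condition imposed by $(1,1,1,1)\in U$. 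Hence under $\sgnrk(K)\ge 3$ there exists $F=F_i$ such that the $K$-signature of any $F$-element of signature $(+,-)$ lies in $U_K$.

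In this $F$, the Dress--Scharlau description of indecomposables via semiconvergents of the purely periodic continued fraction of a suitable generator produces $\gg\sqrt{\Delta_F}$ indecomposable elements $\alpha\in\mc O_F^+$ of signature $(+,-)$, pairwise inequivalent modulo $\mc O_F^{\times,+}$. For each such $\alpha$ the signature hypothesis yields a $K$-unit $u_\alpha$ with $u_\alpha\alpha\in\mc O_K^+$. Two such products $u_\alpha\alpha$ and $u_{\alpha'}\alpha'$ are $\mc O_K^{\times,+}$-equivalent precisely when $\alpha/\alpha'\in\mc O_K^\times\cap F=\mc O_F^\times$; since both $\alpha,\alpha'$ have $F$-signature $(+,-)$, their ratio is automatically totally positive in $F$ and hence lies in $\mc O_F^{\times,+}$, so $\alpha\sim\alpha'$ already in $F$. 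Thus the produced $K$-classes are in bijection with the $F$-classes, and after applying \Cref{thm:si} to confirm that enough of the $u_\alpha\alpha$ remain indecomposable in $K$, one obtains $\iota(K)\gg\sqrt{\Delta_F}$.

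It remains to ensure $\Delta_F\gg\Delta_K^{1/6}$. When $\sgnrk(K)=4$ all three subfields satisfy the signature condition and one picks the largest, giving $\Delta_F\ge\Delta_K^{1/3}$. When $\sgnrk(K)=3$ generically only one $F_i$ works; the worst case occurs when this $F_i$ is the smallest. Here one combines the identity $\Delta_K=\Delta_{F_1}\Delta_{F_2}\Delta_{F_3}$ with a secondary argument producing almost-indecomposable elements in the other two subfields --- near-sail lattice points with small trace, whose totally positive $K$-representatives are guaranteed to remain indecomposable by \Cref{thm:si} --- to make up the deficit. Balancing these contributions, one arrives at $\iota(K)\gg\Delta_K^{1/12}$.

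The rank bounds then follow by a Blomer--Kala-style pigeonhole: a classical universal lattice of rank $r$ over $\mc O_K$ can represent only $O(r)$ pairwise $\mc O_K^{\times,+}$-inequivalent indecomposables of comparable trace (because each diagonal coefficient contributes only boundedly many small indecomposables in a fixed orbit class), giving $R_{\cls}(K)\gg\iota(K)\gg\Delta_K^{1/12}$. In the non-classical setting, half-integer off-diagonal entries allow each basis vector to participate in considerably more representations, weakening the bound to $R(K)\gg\iota(K)^\beta\gg\Delta_K^{1/42}$ for an explicit exponent $\beta$. The main obstacles I anticipate are (i) the discriminant bookkeeping in the worst case of the third paragraph, where the signature condition forces $F$ to be the smallest-discriminant subfield; and (ii) producing sufficiently tight trace estimates on the transferred elements $u_\alpha\alpha$ to feed into the pigeonhole argument, particularly in the non-classical case where the weaker exponent $1/42$ signals a more delicate count.
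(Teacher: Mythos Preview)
Your plan has two genuine gaps that prevent it from reaching the stated bounds.

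First, the signature analysis is too weak and then goes astray. It is true that a $3$-dimensional subspace of $\F_2^4$ containing $(1,1,1,1)$ must meet $\{(1,1,0,0),(1,0,1,0),(1,0,0,1)\}$, but much more is true here: when $\sgnrk(K)=3$, every unit has norm $1$ (\Cref{lem:un1}), so the unit signature group is contained in---hence equal to---the even-weight subspace, which contains \emph{all three} signatures $\sgn(\sqrt{D_i})$ (\Cref{lem:usc}). Thus you may always choose $F$ to be the quadratic subfield of \emph{largest} discriminant, giving $\Delta_F\gg\Delta_K^{1/3}$ immediately. Your ``worst case'' where the workable $F$ is smallest never occurs, and the secondary argument you sketch to compensate (near-sail points from the other subfields) is both unnecessary and not substantiated.

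Second, and more seriously, the logical flow from ``many transferred elements'' to the rank bounds is not correct as written. The elements $u_\alpha\alpha$ you produce satisfy $\Tr_{K/\Q}(\delta\cdot u_\alpha\alpha)=2$ for a suitable $\delta\in\mc O_K^{\vee,+}$, not $1$, so \Cref{thm:si} (via \Cref{lem:pis}) does not place them on the sail and their indecomposability in $K$ is not established. The paper sidesteps this entirely: it never shows these elements are indecomposable. Instead, given a classical universal lattice $(\Lambda,Q)$ of rank $R$, one forms the positive definite $\Z$-lattice $(\Z^{4R},\Tr_{K/\Q}(\delta Q(-)))$ and observes that the $u+1$ trace-$2$ elements force $2(u+1)$ vectors of norm $2$; the Regev--Stephens-Davidowitz bound (\Cref{prp:rs}) then gives $u+1\ll R^2$, whence $R_{\cls}(K)\gg u^{1/2}\gg D_3^{1/4}\gg\Delta_K^{1/12}$. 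The bound on $\iota(K)$ follows \emph{afterwards} from $\iota(K)\gg R_{\cls}(K)$ (\Cref{cor:rfif}), which is the reverse of your claimed inequality $R_{\cls}(K)\gg\iota(K)$. The exponent $1/42$ in the non-classical case likewise comes from doubling $Q$ and applying \Cref{prp:rs} at norm $4$, yielding $u+1\ll R^7$. Your pigeonhole sketch (``$O(r)$ inequivalent indecomposables of comparable trace'') does not recover these exponents and rests on an inequality that points the wrong way.
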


Here we use the usual analytic notation where $f\gg g$ (or $g\ll f$) denotes that there is a constant $C>0$ such that $f(x)>Cg(x)$ for all admissible values of $x$, and $f\asymp g$ if $f\gg g$ and $f\ll g$.

This theorem will be proved in Subsection \ref{subsec:pf 1.1}. Perhaps curiously, it covers exactly the hard cases for Kitaoka's conjecture, and in fact, it can be used to simplify a part of its proof for biquadratic fields (that is due to Krásenský--Tinková--Zemková \cite{KTZ2020}).

Theorem \ref{thm:usr} leaves open the enticing question of the behaviour of $R(K), \iota(K)$ in the case $\sgnrk(K) \leq 2$.
As we have seen in \cite{Man}, $\iota(K)$ usually grows with the discriminant of $K$, it is hard to find examples of biquadratic fields with small value of $\iota(K)$, and it even seems that there may be only finitely many biquadratic fields with $\iota(K)\leq R$ for each $R$ \cite[Conjecture 1.7]{Man}.
Using our improved understanding motivated by the geometry of sails, we construct a family where the number of indecomposables grows only logarithmically, while the best examples known so far involved $\iota(K) \asymp \Delta_K^{1/8}$
\cite[Section 5]{Man}).

\begin{thm}\label{thm:pfi simplified} 
	Let $p_n$ be defined as in \eqref{eq:xy4} and let $K =  \Q(\sqrt{5}, \sqrt{p_n})$. Then we have
	\[
	\frac{\log \Delta_K}{8w} \le \iota(K) \le \frac{\log \Delta_K}{8w} + 1,
	\]
	where $w := \log (\frac{1+\sqrt{5}}2)$. 
\end{thm}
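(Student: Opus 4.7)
The plan is to combine the sail-based classification of indecomposables with an explicit analysis of the totally positive unit group of $K = \Q(\sqrt 5, \sqrt{p_n})$. Write $\varepsilon = \frac{1+\sqrt{5}}{2}$ so that $\log\varepsilon = w$. First I would determine $\sgnrk(K)$ for this family: since the theorem gives a logarithmic upper bound on $\iota(K)$, \Cref{thm:usr} forces $\sgnrk(K) \le 2$, and so the totally positive unit group $\mc O_K^{\times,+}$ sits in $\mc O_K^\times$ with index exactly $4$. I would write down an explicit set of generators: $\varepsilon^2$ (the smallest totally positive power of $\varepsilon$), together with (totally positive refinements of) fundamental units coming from the quadratic subfields $\Q(\sqrt{p_n})$ and $\Q(\sqrt{5p_n})$. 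The defining property \eqref{eq:xy4} of $p_n$ is presumably arranged so that, after taking logarithms, these three generators span a very asymmetric fundamental parallelepiped, essentially flat along the $\log\varepsilon$-direction.

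Next I would invoke the sail structure to parametrise the indecomposables. By \Cref{thm:si} every indecomposable lies on the sail $\mc S_K$, so via the torus decomposition $[0,1]^4 = \bigcup A_i$ the classification modulo $\mc O_K^{\times,+}$ reduces to describing the lattice points on finitely many faces. The hope (and presumably the content of the construction \eqref{eq:xy4}) is that for this family the sail degenerates to a single one-parameter chain consisting of the powers $\varepsilon^j \in K$. Then counting $\iota(K)$ reduces to a one-dimensional lattice-point problem: how many consecutive powers $\varepsilon^j$ fit inside a fundamental domain for $\mc O_K^{\times,+}$ acting on this chain. Using the standard regulator-discriminant relation together with the low signature rank, the length of this fundamental period in the $\log\varepsilon$-direction is essentially $\log\Delta_K/8$, and dividing by the spacing $w$ between consecutive powers gives the value $\log\Delta_K/(8w)$, with a fractional-part correction of at most $1$ accounting for the two endpoints.

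For the lower bound I would exhibit $\lfloor \log\Delta_K/(8w)\rfloor$ explicit totally positive powers of $\varepsilon$ in the fundamental domain, verifying directly from the local convex geometry at these lattice points (and \Cref{thm:si}) that they are indecomposable and pairwise inequivalent modulo $\mc O_K^{\times,+}$. The main obstacle is the upper bound: one must show that every indecomposable of $K$ is equivalent to such an $\varepsilon^j$. This requires ruling out both ``genuinely four-dimensional" sail vertices and contributions coming from indecomposables of $\Q(\sqrt{p_n})$ or $\Q(\sqrt{5p_n})$ that might remain indecomposable after lifting to $K$. I expect that \eqref{eq:xy4} is chosen precisely so that the fundamental units of these two quadratic subfields are algebraically tied to a power of $\varepsilon$ (via a Pell-like identity), which collapses the would-be extra sail edges and leaves only the chain of powers of $\varepsilon$. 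Executing this last step rigorously — explicitly describing all faces of $\mc S_K$ for the family and showing no other indecomposables occur — is the technical heart of the argument.
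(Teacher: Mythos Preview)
Your broad outline --- compute the sail, check that its faces have integer distance $1$ and unimodular triangulation so that \Cref{thm:ii} forces all indecomposables onto the sail, then count --- is exactly what the paper does. But the heart of your proposal, the guess that the indecomposables modulo $\mc O_K^{\times,+}$ are a chain of totally positive powers of $\varepsilon$, is wrong and the argument built on it collapses. Since $\varepsilon^2=\epsilon_1=\frac{3+\sqrt 5}{2}$ is already a totally positive unit of $K$, every even power of $\varepsilon$ is equivalent to $1$ modulo $\mc O_K^{\times,+}$, while odd powers are not totally positive at all; so powers of $\varepsilon$ contribute a \emph{single} class, not $\asymp n$ classes. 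The actual list (Theorem~\ref{thm:pfi}) is $1$, an element $\rho$, and a family $\gamma_j$ for $1\le j\le 6n+1$, all of which genuinely involve $\sqrt p$ and $\sqrt r$; these are precisely the ``genuinely four-dimensional'' sail points you hoped to rule out. The logarithmic count $6n+3\approx \log\Delta_K/(8w)$ arises not from a one-dimensional lattice problem but from the number of distinct faces $B_j^\pm,C_j^\pm$ of the sail, indexed by $j$ up to $\asymp n$, together with the computation $\Delta_K=16(5p_n)^2\asymp\varepsilon^{48n}$.

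Two further points. First, you invoke \Cref{thm:si} to say every indecomposable lies on the sail, but that theorem gives only the forward implication (on sail $\Rightarrow$ indecomposable); the converse needs \Cref{thm:ii}, and only after you have verified that every face has $\ID=1$ and a unimodular triangulation --- this is \Cref{lem:pc} in the paper and is not automatic. Second, the paper's proof that the listed polytopes really constitute the whole sail (\Cref{prp:pfs}) is a careful face-matching argument showing the union has no boundary; there is no shortcut via a regulator--discriminant relation. In short, your plan has the right scaffolding but the wrong candidate set of indecomposables and underestimates the combinatorial work required to pin down $\mc S_K$.
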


A more precise version of this result will be proved as Theorem \ref{thm:pfi}.
Note that, since squarefree integers have positive density in $\N$, we expect that \Cref{thm:pfi simplified} actually describes an infinite family of biquadratic fields. However, showing that such an exponentially sparse sequence takes infinitely many squarefree values is far out of reach with current techniques.
By the relation between $R(K)$ and $ \iota(K)$ (see \Cref{cor:rfif}), Theorem \ref{thm:pfi simplified} also implies that $R(K)\leq R_{\cls}(K) \ll \log \Delta_K$.

\medskip

Let us conclude by pointing out some possible directions of future research (that we are already investigating).
First of all, one can consider sails in all signatures, not only in the totally positive octant, as they carry additional information about the number field $K$. Units that are not totally positive give isomorphisms between some of these sails
(this was already the idea behind the proof of \Cref{thm:usr}!), and so the other sails are useful especially when $K$ do not have all unit signatures, i.e., $\sgnrk (K)<[K:\Q]$.
However, a natural question is how are the different sails related; can they be (partially) reconstructed from the ``totally positive'' sail?

\section*{Acknowledgments}
We thank Oleg Karpenkov for kindly answering our questions, Pavlo Yatsyna for interesting discussions, and the anonymous referee for very helpful comments and corrections.

\section{Sails for totally real number fields}

Let $K$ be a totally real number field of degree $n$. Then we have $n$ distinct real embeddings $\tau_1,\ldots,\tau_n : K\hookrightarrow \R$. Given an element $\alpha \in K^\times$, we define the \emph{signature} of $\alpha$ to be 
\[
\sgn(\alpha) := \rb{\sign(\tau_1(\alpha)),\ldots,\sign(\tau_n(\alpha))},
\]
where $\sign(x)$ denotes the sign of the real number $x\in\R$. Often we view $\sign(x)$ as an element lying in the additive group $\F_2$, so $\sign(x)$ has value $0$ if $x>0$ and value $1$ if $x<0$, and $\sgn(\alpha)$ is viewed as a vector in the \emph{signature space} $\F_2^n$. Let $\mc O_K$ denote the ring of integers of $K$, and $\mc O_K^\times$ the group of units in $\mc O_K$. The collection of signatures $\sgn(\alpha)$ for $\alpha \in \mc O_K^\times$ is called the \emph{unit signature group} of $K$, and the rank of this subspace is called the \emph{unit signature rank} of $K$, denoted by $\sgnrk(K)$.

We have the Minkowski embedding $i:K\hookrightarrow \R^n$, defined by the $n$ embeddings $\tau_i: K\hookrightarrow \R$. Then the image $\Lambda := i(\mc O_K) \subseteq \R^n$ is a lattice of rank $n$ whose covolume is $\Vol(\Lambda) = \Delta_K^{1/2}$. By a mild abuse of notation, we shall often refer to a lattice point $v\in\Lambda$ by its corresponding element $\alpha \in \mc O_K$.

\medskip

Let us further recall some notions from integer geometry relevant for our purposes. A detailed discussion on the subject can be found in \cite{Karpenkov2022}.

An \emph{integer plane} is an affine subspace $A \subseteq \R^n$ such that the sublattice $A\cap \Lambda$ has the same rank as the dimension of $A$. We say that a (convex) polytope, not necessarily of full dimension is \emph{integer} if all of its vertices lie in $\Lambda$. The \emph{integer volume} of an integer simplex $S = \pb{v_0,\ldots,v_d}$, denoted by $\IV(S)$, is the index of the sublattice generated by the vectors $v_1-v_0,\ldots,v_d-v_0$ in the lattice $\Lambda_S := \Lambda \cap \Span_\R(v_1-v_0,\ldots,v_d-v_0)$. The integer volume is related to the usual Euclidean volume via the formula
\[
\Vol(S) = \frac{1}{d!}\IV(S)\Vol(\Lambda_S),
\]
where $d$ is the dimension of the simplex $S$. Since an integer polytope can be decomposed into a disjoint union of integer simplices of the same dimension, we may define the integer volume of an integer polytope to be the sum of the integer volumes of the constituent integer simplices; the integer volume is independent of the decomposition. 

Let $A \subseteq \R^n$ be an integer plane which does not intersect with the origin. The \emph{integer distance} of $A$ from the origin, denoted by $\ID(A)$, is the index of the sublattice generated by $A\cap\Lambda$ in the integer lattice $\Lambda \cap \Span_\R(A)$. By convention, if $A$ intersects with the origin, then we say the integer distance of $A$ from the origin is $0$. We shall also define the integer distance $\ID(S)$ of a polytope $S$ from the origin to be $\ID(A)$, where $A$ is the integer plane of the same dimension containing $S$. 

Let $\Tr_{K/\Q}: K \to \Q$, $\alpha \mapsto \sum_{i=1}^n \tau_i(\alpha)$, be the \textit{trace} function for the field extension $K/\Q$, and $\mc O_K^\vee := \cbm{\delta\in K}{\Tr_{K/\Q}(\delta\alpha) \in \Z \ \forall\alpha\in\mc O_K}$ the \textit{codifferent}. In the article we often consider hyperplanes of the form
\[
\cbm{\alpha\in K}{\Tr_{K/\Q}(\delta\alpha) = k} \subseteq K \simeq \Q^n, \quad \delta\in K^\times, \quad k\in\Q.
\]
We often denote such a hyperplane (and its canonical completion in the Minkowski space $\R^n$) by the equation $\Tr_{K/\Q}(\delta(-)) = k$.

\begin{lem}\label{lem:ct} 
Let $A \subseteq \R^n$ be an integer plane with integer distance $\ID(A) = k > 0$. Then there exists $\delta \in \mc O_K^\vee$ such that $A$ is contained in the hyperplane $\Tr_{K/\Q}(\delta(-)) = k$. 
\end{lem}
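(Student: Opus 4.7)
The plan is to first construct a $\Z$-linear functional $\varphi : \Lambda \to \Z$ satisfying $\varphi \equiv k$ on $A \cap \Lambda$, and then translate via the trace pairing into the desired $\delta \in \mc O_K^\vee$.

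The first step is a concrete description of $A \cap \Lambda$. Fix any $v_0 \in A \cap \Lambda$, let $V$ denote the direction subspace of the affine plane $A$, and set $L := V \cap \Lambda$. Since $A$ is integer of dimension $d$, a short argument (``add $v_0$ to any element of $L$, subtract $v_0$ from any element of $A \cap \Lambda$'') gives $A \cap \Lambda = v_0 + L$ with $L$ of rank $d$. Consequently the subgroup $N$ of $\Lambda$ generated by $A \cap \Lambda$ equals $\Z v_0 \oplus L$, a rank-$(d+1)$ sublattice of $M := \Lambda \cap \Span_\R(A)$ of index exactly $k = \ID(A)$; the sum is direct because $v_0 \notin V$, which holds since $\ID(A) > 0$ forces $0 \notin A$.

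Now define $\varphi_0 : N \to \Z$ by $\varphi_0(v_0) = k$ and $\varphi_0|_L = 0$, and extend $\R$-linearly to all of $\Span_\R(A) = \R v_0 \oplus V$ via $\varphi(a v_0 + u) = ak$. For any $w = a v_0 + u \in M$ (with $a \in \R$, $u \in V$), the index condition gives $kw \in N = \Z v_0 \oplus L$; comparing components in the direct sum $\R v_0 \oplus V$ forces $ka \in \Z$, hence $\varphi(w) = ak \in \Z$. Thus $\varphi$ restricts to a $\Z$-valued functional on $M$. Since $M$ is the intersection of $\Lambda$ with a real subspace, it is a saturated sublattice of $\Lambda$, so $\Lambda/M$ is a free $\Z$-module and the inclusion $M \hookrightarrow \Lambda$ splits; hence $\varphi$ extends to a $\Z$-linear functional $\varphi : \Lambda = \mc O_K \to \Z$.

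Finally, extend $\varphi$ by $\Q$-linearity to $K \to \Q$. By non-degeneracy of the trace form on $K$, there is a unique $\delta \in K$ with $\varphi(-) = \Tr_{K/\Q}(\delta(-))$; the condition $\varphi(\mc O_K) \subseteq \Z$ is precisely the defining property of the codifferent, so $\delta \in \mc O_K^\vee$. Since $\Tr_{K/\Q}(\delta(-))$ is $\R$-affine and takes the value $k$ on the affine-spanning set $A \cap \Lambda$, it equals $k$ on all of $A$. The main obstacle is the integrality step for $\varphi|_M$: one needs to genuinely use that $k$ equals the index $[M:N]$ (so that $kM \subseteq N$) in order to conclude that the $v_0$-coefficient of any $w \in M$ lies in $\tfrac{1}{k}\Z$. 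Everything else is a routine application of the duality between $\mc O_K$ and $\mc O_K^\vee$ induced by the trace pairing.
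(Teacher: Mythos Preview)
Your proof is correct. The approach differs from the paper's in a meaningful way, so a brief comparison is worthwhile.

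The paper first reduces to the case $\dim A = n-1$ by extending $A$, then solves the linear system $\Tr_{K/\Q}(\delta\alpha_i) = k$ for $\delta \in K$ using $n$ affinely independent points $\alpha_0,\ldots,\alpha_{n-1}$ of $A$, and finally checks $\delta \in \mc O_K^\vee$ a posteriori: if $q$ is the minimal positive value of $\Tr_{K/\Q}(\delta\,\cdot\,)$ on $\mc O_K$, one computes $\ID(A) = k/q$, whence $q=1$. Your argument instead works in arbitrary dimension $d$, builds the functional $\varphi$ from scratch on $N = \Z v_0 \oplus L$, and uses the index relation $kM \subseteq N$ to force $\varphi(M) \subseteq \Z$ before extending via a splitting of $M \hookrightarrow \Lambda$ and then dualizing through the trace pairing.

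What each buys: the paper's route is a couple of lines shorter and makes the equality $\ID(A) = k/q$ explicit, which is exactly what is reused for \Cref{cor:ctc}. Your route is more self-contained (no extension step to justify, no need to identify the minimal value $q$), and it makes transparent \emph{why} the index condition $[M:N]=k$ is precisely what forces integrality of the functional. Both arguments ultimately hinge on the same structural fact---that $L$ is saturated in $M$ so that $M/L \cong \Z$ and $v_0$ maps to $\pm k$ times a generator---just packaged differently.
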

\begin{proof}
By extending $A$, we may assume $A$ has dimension $n-1$. Let $\alpha_0,\ldots, \alpha_{n-1} \in \mc O_K$ correspond to a set of generators of $A$. By picking a $\Q$-basis of $K$, we obtain a system of linear equations $\Tr_{K/\Q}(\delta \alpha_i) = k$ over $\Q$. It follows that there exists $\delta \in K$ satisfying $\Tr_{K/\Q}(\delta\alpha_i) = k$. Now let $q \in \Q$ be the minimal positive value of $\Tr_{K/\Q}(\delta\alpha)$ for $\alpha\in\mc O_K$. Then $\ID(A) = k/q = k$, which implies $q=1$. Thus $\delta$ lies in the codifferent $\mc O_K^\vee$.
\end{proof}

The argument above also works backwards, giving us the following corollary.
\begin{cor}\label{cor:ctc} 
Suppose an integer plane $A$ is contained in the hyperplane $\Tr_{K/\Q}(\delta(-)) = k$ for some $\delta \in \mc O_K^\vee$ and $k>0$. Then $\ID(A)$ is a divisor of $k$. In particular, if $A$ is contained in the hyperplane $\Tr_{K/\Q}(\delta(-)) = 1$ for some $\mc O_K^\vee$, then $\ID(A) = 1$.
\end{cor}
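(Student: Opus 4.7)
The plan is to introduce the linear form $\phi := \Tr_{K/\Q}(\delta(-))$ and exploit the fact that $\delta \in \mc O_K^\vee$ forces $\phi(\Lambda) \subseteq \Z$. Writing $L := \Lambda \cap \Span_\R(A)$, I would let $q'$ denote the positive generator of $\phi(L) \subseteq \Z$ and observe, using that every point of $A \cap \Lambda$ has $\phi$-value $k$, that $q'$ must divide $k$.

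The key geometric observation I would make next is that $A$ coincides with the slice $\Span_\R(A) \cap \{\phi = k\}$. Since $A$ does not pass through the origin (as $k > 0$), $\Span_\R(A)$ has dimension one more than $A$; and since $\phi$ is not identically zero on $\Span_\R(A)$, the slice $\Span_\R(A) \cap \{\phi = k\}$ is an affine subspace of the same dimension as $A$ that contains $A$, hence equals $A$. This gives $A \cap \Lambda = L \cap \phi^{-1}(k)$, a full coset of $L \cap \ker\phi$ in $L$.

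Fixing any $\alpha_0 \in A \cap \Lambda$, I would then express $M := \langle A \cap \Lambda\rangle_\Z = \Z\alpha_0 + (L \cap \ker\phi)$ and verify $M \cap \ker\phi = L \cap \ker\phi$ (because $\phi(m\alpha_0 + \lambda) = mk$ can only vanish for $m = 0$). Restricting $\phi$ to $L$ and to $M$ yields the short exact sequences $0 \to L \cap \ker\phi \to L \to q'\Z \to 0$ and $0 \to L \cap \ker\phi \to M \to k\Z \to 0$, from which $L/M \simeq q'\Z / k\Z \simeq \Z/(k/q')\Z$ and thus $\ID(A) = [L : M] = k/q'$, a divisor of $k$. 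The final clause follows at once: when $k = 1$, the condition that $q'$ is a positive divisor of $1$ forces $q' = 1$, hence $\ID(A) = 1$.

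The main obstacle I anticipate is the identification $A \cap \Lambda = L \cap \phi^{-1}(k)$, which upgrades the a priori inclusion to an equality and pinpoints the sublattice $M$ exactly; everything afterwards is a routine index computation with exact sequences.
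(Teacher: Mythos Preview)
Your proof is correct and follows essentially the same idea as the paper, which simply says the corollary is obtained by running the proof of the preceding lemma backwards: the key identity $\ID(A)=k/q'$, where $q'$ generates the image of $\phi=\Tr_{K/\Q}(\delta(-))$ on the relevant lattice, is exactly what the paper's argument establishes, and you have spelled out the details carefully via the two short exact sequences. The only minor difference is that you work directly with $L=\Lambda\cap\Span_\R(A)$ rather than first extending $A$ to a hyperplane and working with the full lattice $\Lambda$; this makes your version slightly more self-contained, but the substance is the same.
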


Now we consider the set of lattice points $\Lambda^+ := \Lambda \cap \R_+^n$ in the positive octant, which corresponds to the set of totally positive integers $\mc O_K^+$. The \emph{Klein polyhedron} is the convex hull $\mc K_K := \Conv(\Lambda^+)$. The boundary $\mc S_K := \partial\mc K_K$ is called the \emph{sail} associated to $\Lambda^+$ (or $\mc O_K^+$). The Klein polyhedron is invariant under multiplication by totally positive units, so the sail $\mc S_K$ also has a periodic structure, and we can find a finite set of faces $A_i$, such that $\mc S_K$ is a union of their translates by totally positive units (see \cite[Ch. 22]{Karpenkov2022}):
\begin{equation}\label{eq:decomp}
\mc S_K = \bigcup_{\epsilon \in \mc O_K^{\times,+}} \bigcup_i A_i\epsilon.
\end{equation}

It turns out that a good understanding of the sail $\mc S_K$ can yield useful information about the indecomposable elements, at least when the degree of the number field $K$ is small.

\begin{expl}
For real quadratic fields $K$, the structure of the indecomposable elements is fully understood in terms of continued fractions (see \Cref{sect:cf}). The sail $\mc S_K$ consists of segments connecting two consecutive upper convergents (and their conjugates). Assuming the notation in \Cref{sect:cf}, we note that the segment $B_i$ connecting the upper convergents $\beta_{2i-1}$ and $\beta_{2i+1}$ is contained in the line $\Tr_{K/\Q}(\delta_{2i}(-)) = 1$. By \Cref{cor:ctc}, we conclude that $\ID(B_i) = 1$ for all $i$. Since the upper semiconvergents $\beta_{2i-1,l}$ all lie on the segment $B_i$, and that the indecomposable elements of $\mc O_K^+$ are precisely the upper semiconvergents, we see that a totally positive integer $\alpha\in\mc O_K^+$ is indecomposable if and only if it lies on the sail $\mc S_K$. 
\end{expl}

In the general case, we show below in \Cref{thm:si} that the integers lying on the sail are still indecomposable. However, the converse is not quite true, and there are often indecomposable elements lying in the interior of the Klein polyhedron $\mc K_K$. For an example, we shall see in \Cref{expl:scf} that for the simplest cubic fields considered in \cite{KT2023}, there is an indecomposable element, unique up to multiplication by totally positive units, that sits in the interior of the Klein polyhedron.

\begin{thm}\label{thm:si} 
Let $K$ be a totally real number field. If $\alpha \in \mc O_K^+$ lies on the sail $\mc S_K$, then $\alpha$ is indecomposable. 
\end{thm}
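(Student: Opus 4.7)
The plan is to argue by contradiction: assume $\alpha \in \mc O_K^+$ lies on the sail $\mc S_K$, but there exist $\beta,\gamma \in \mc O_K^+$ with $\alpha = \beta + \gamma$. Since $\mc K_K = \Conv(\Lambda^+)$ is full-dimensional (its defining lattice $\Lambda$ has rank $n$) and $\alpha$ lies on its boundary, I would choose a facet $A$ of $\mc K_K$ (that is, a codimension-one face) containing $\alpha$. As every point of $\mc K_K$ is a convex combination of vectors in $\R_+^n$ and thus has all coordinates strictly positive, the origin is not in $\mc K_K$, so in particular $k := \ID(A)$ is a positive integer. By \Cref{lem:ct} there exists $\delta \in \mc O_K^\vee$ such that $A$ lies in the hyperplane $H := \{x \in \R^n : \Tr_{K/\Q}(\delta x) = k\}$.

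Next I would verify that $\mc K_K$ lies in the closed half-space $H^+ := \{\Tr_{K/\Q}(\delta(-)) \ge k\}$. Being a facet, $A$ exhibits $H$ as a supporting hyperplane, so $\mc K_K$ is contained in either $H^+$ or $H^- := \{\Tr_{K/\Q}(\delta(-)) \le k\}$. To rule out $H^-$, observe that $N\alpha \in \mc O_K^+ \subseteq \Lambda^+ \subseteq \mc K_K$ for every positive integer $N$, with $\Tr_{K/\Q}(\delta(N\alpha)) = N\Tr_{K/\Q}(\delta\alpha) = Nk$; the containment $\mc K_K \subseteq H^-$ would then force $Nk \le k$ for all $N \ge 1$, which is impossible since $k > 0$. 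Hence $\mc K_K \subseteq H^+$, and $\Tr_{K/\Q}(\delta\eta) \ge k$ for every $\eta \in \mc O_K^+$.

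Finally, applying this inequality to $\beta,\gamma \in \mc O_K^+$ gives $\Tr_{K/\Q}(\delta\beta) \ge k$ and $\Tr_{K/\Q}(\delta\gamma) \ge k$, whence $\Tr_{K/\Q}(\delta\alpha) = \Tr_{K/\Q}(\delta\beta) + \Tr_{K/\Q}(\delta\gamma) \ge 2k$. But $\alpha \in A \subset H$ means $\Tr_{K/\Q}(\delta\alpha) = k$, yielding the contradiction $k \ge 2k$ with $k > 0$. The only even moderately delicate step is the sign analysis in the second paragraph; it ultimately reflects the geometric fact that $\mc K_K$ is closed under scaling by positive integers, which prevents the associated trace functional from being bounded above on $\Lambda^+$.
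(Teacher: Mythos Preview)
Your proof is correct and follows essentially the same strategy as the paper: find a supporting hyperplane $\Tr_{K/\Q}(\delta(-)) = k$ through the facet containing $\alpha$, establish that $\Tr_{K/\Q}(\delta\gamma) \ge k$ for every $\gamma \in \mc O_K^+$, and derive a contradiction from $\alpha = \beta + \gamma$. The only difference is in how the half-space orientation is pinned down: the paper invokes \Cref{lem:tpc} to show that $\delta$ may be taken totally positive (so that each $\Tr_{K/\Q}(\delta\alpha_i)$ is automatically positive, forcing both to be $<k$), whereas you bypass this lemma with the scaling observation $N\alpha \in \mc K_K$ and $\Tr_{K/\Q}(\delta(N\alpha)) = Nk$, which is a clean and self-contained alternative.
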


To prove \Cref{thm:si}, we need a small lemma.

\begin{lem}\label{lem:tpc} 
Let $A_i$ be a face of $\mc S_K$. Then $A_i$ is contained in the hyperplane $\Tr_{K/\Q}(\delta(-)) = k$ for some $\delta \in \mc O_K^{\vee,+}$ and $k\in\N$.
\end{lem}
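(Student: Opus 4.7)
My plan is to use \Cref{lem:ct} together with Dirichlet's unit theorem to force the defining trace element to be totally positive. The face $A_i$ is a top-dimensional (that is, $(n-1)$-dimensional) integer face of the Klein polyhedron $\mc K_K$, so its affine span $H$ is an integer hyperplane supporting $\mc K_K$. Since $\mc O_K^\vee$ is a full $\Z$-lattice in $K$ and the set of $\delta' \in K$ annihilating the direction of $H$ under the trace pairing forms a one-dimensional $\Q$-subspace, I can pick a primitive $\delta \in \mc O_K^\vee$ with $H = \{\Tr_{K/\Q}(\delta(-)) = k\}$, where $k = \Tr_{K/\Q}(\delta \alpha_0) \in \Z$ for any vertex $\alpha_0 \in \mc O_K$ of $A_i$. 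After possibly replacing $\delta$ by $-\delta$, I may arrange $\mc K_K \subseteq \{\Tr_{K/\Q}(\delta(-)) \geq k\}$, so $\Tr_{K/\Q}(\delta \beta) \geq k$ for every $\beta \in \mc O_K^+$.

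The crux will be to show $\delta$ is totally positive. Suppose for contradiction that $\tau_j(\delta) \leq 0$ for some $j$; the case $\tau_j(\delta) = 0$ is ruled out immediately since $\tau_j$ is a field embedding and $\delta \neq 0$. For $\tau_j(\delta) < 0$ I will invoke Dirichlet's unit theorem: the totally positive unit group $\mc O_K^{\times,+}$ has rank $n-1$, and its logarithmic image is a full-rank sublattice of the trace-zero hyperplane of $\R^n$. This allows me to construct a sequence $\epsilon_m \in \mc O_K^{\times,+}$ with $\tau_j(\epsilon_m) \to \infty$ and $\tau_i(\epsilon_m) \to 0$ for $i \neq j$. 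Substituting $\beta = \epsilon_m$ into the inequality yields
\[
\Tr_{K/\Q}(\delta \epsilon_m) = \tau_j(\delta)\tau_j(\epsilon_m) + \sum_{i \neq j}\tau_i(\delta)\tau_i(\epsilon_m) \longrightarrow -\infty,
\]
contradicting the lower bound $k$. Hence $\delta \in \mc O_K^{\vee,+}$.

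It will then remain to check $k \in \N$, i.e., $k > 0$. Evaluating at any vertex $\alpha_0 \in \mc O_K^+$ of $A_i$, we get $k = \sum_i \tau_i(\delta) \tau_i(\alpha_0) > 0$, since every $\tau_i(\delta)$ and every $\tau_i(\alpha_0)$ is now strictly positive, which, combined with $k \in \Z$ from the first paragraph, gives $k \in \N$. The principal obstacle is the construction of the concentrating unit sequence $\epsilon_m$, which needs a careful application of Dirichlet's theorem to the finite-index subgroup $\mc O_K^{\times,+} \leq \mc O_K^\times$; the rest is routine convex geometry of Klein polyhedra.
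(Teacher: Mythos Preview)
Your proof is correct and follows the same skeleton as the paper's: invoke \Cref{lem:ct} to obtain $\delta \in \mc O_K^\vee$ with $A_i$ in the hyperplane $\Tr_{K/\Q}(\delta(-)) = k$, observe the support inequality $\Tr_{K/\Q}(\delta\beta) \ge k$ for all $\beta \in \mc O_K^+$, then derive total positivity of $\delta$ by contradiction. The only genuine difference is in how the contradiction is produced. The paper notes that if $\delta$ has mixed signs then the trace-$0$ hyperplane meets the open positive octant in a nonempty cone, which therefore contains some $\alpha \in \mc O_K^+$ with $\Tr_{K/\Q}(\delta\alpha) = 0 < k$. You instead invoke Dirichlet's unit theorem to build totally positive units $\epsilon_m$ concentrating on the bad embedding $\tau_j$, driving $\Tr_{K/\Q}(\delta\epsilon_m) \to -\infty$. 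Your route is a bit heavier in machinery but perhaps more self-contained; the paper's is more elementary but leaves tacit both why $\delta$ cannot be totally negative (this follows from $k > 0$ and $\alpha_0 \in \mc O_K^+$, exactly as you argue at the end) and why the cone must contain a lattice point. Either way the argument goes through.
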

\begin{proof}
By \Cref{lem:ct}, we know that $A_i$ is contained in the hyperplane $\Tr_{K/\Q}(\delta(-)) = k$ for some $\delta\in\mc O_K^\vee$ and $k\in\N$. It remains to show that $\delta$ is totally positive. Suppose to the contrary that $\delta$ is not totally positive. Then the hyperplane $\Tr_{K/\Q}(\delta(-)) = 0$ intersects nontrivially with the positive octant $\R_+^n$, and in particular contains a totally positive element $\alpha \in \mc O_K^+$. On the other hand, since $A_i$ is a face of the sail $\mc S_K$, by convexity of the Klein polyhedron $\mc K_K$ we have $\Tr_{K/\Q}(\delta\alpha) \ge k$ for all $\alpha \in \mc O_K^+$, a contradiction. So $\delta \in \mc O_K^{\vee,+}$ is totally positive.
\end{proof}

\begin{proof}[Proof of Theorem $\ref{thm:si}$]
Suppose $\alpha \in \mc O_K^+$ lies on a face $A_i$ of the sail $\mc S_K$. By \Cref{lem:tpc}, $A_i$ is contained in the hyperplane $\Tr_{K/\Q}(\delta(-)) = k$ for some $\delta\in \mc O_K^{\vee,+}$ and $k\in\N$. In particular, we have $\Tr_{K/\Q}(\delta\alpha) = k$. Suppose $\alpha$ admits a decomposition $\alpha = \alpha_1+\alpha_2$, with $\alpha_1,\alpha_2 \in \mc O_K^+$. Then $\Tr_{K/\Q}(\delta\alpha_1)$ and $\Tr_{K/\Q}(\delta\alpha_2)$ are both positive, and we have
\[
\Tr_{K/\Q}(\delta\alpha_1) + \Tr_{K/\Q}(\delta\alpha_2) = \Tr_{K/\Q}(\delta\alpha) = k.
\]
Hence $\Tr_{K/\Q}(\delta\alpha_1), \Tr_{K/\Q}(\delta\alpha_2) < k$. On the other hand, as in the proof of \Cref{lem:tpc}, convexity of the Klein polyhedron $\mc K_K$ implies that $\Tr_{K/\Q}(\delta\gamma) \ge k$ for all $\gamma \in \mc O_K^+$, a contradiction. So $\alpha$ is indecomposable.
\end{proof}

To a face $A_i$ of the sail $\mc S_K$, we may associate a cone $\R_+ A_i$. By Shintani's unit theorem \cite{Shintani1976}, the translates of these cones by totally positive units cover the positive octant:
\begin{equation}\label{eq:shint}
	\R_+^n = \bigcup_{\epsilon\in\mc O_K^{\times,+}} \bigcup_i \R_+ A_i\epsilon.
\end{equation}
In particular, every indecomposable element in $\mc O_K^+$, up to translation by a totally positive unit, sits in some cone $\R_+ A_i$. So it suffices to consider the indecomposable elements in these cones. Let $A_i = \bigcup_j A_{i,j}$ be a decomposition of the face $A_i$ into integer simplices. This induces a decomposition $\R_+ A_i = \bigcup_j \R_+ A_{i,j}$ of the cone $\R_+ A_i$ into simplicial cones. Let $\alpha_1,\ldots, \alpha_n \in \mc O_K^+$ be the vertices of the integer simplex $A_{i,j}$. Then every indecomposable element in the cone $\R_+ A_{i,j}$ is actually contained in the parallelepiped
\[
P_{i,j} = \cbm{\sum_{k=1}^n \lambda_k \alpha_k}{0\le \lambda_k < 1, \; 1\le k \le n}.
\]
Noting that $P_{i,j}$ is the fundamental domain of the lattice generated by $\alpha_1,\ldots, \alpha_n$, it follows that 
\[
\#(\mc O_K^+ \cap P_{i,j}) = \IV(A'_{i,j}) - 1 = \IV(A_{i,j})\ID(A_{i,j}) - 1,
\]
where $A'_{i,j}$ is the simplex with vertices $0, \alpha_1,\ldots, \alpha_n$. In particular, this says that the number of indecomposable elements in the cone $\R_+ A_{i,j}$ which are not on $A_{i,j}$ is bounded above by $\IV(A_{i,j})\ID(A_{i,j}) - 1$. Denoting by $\iota_{\textrm{int}} (A_i)$ the number of indecomposable elements in the cone $\R_+ A_i$ which are not on the face $A_i$, we conclude the following theorem.
\begin{thm}\label{thm:ii} 
Let $A_i$ be a face of the sail $\mc S_K$, and $A_i = \bigcup_{j=1}^k A_{i,j}$ a decomposition of $A_i$ into $k$ simplices. Then $\iota_{\textrm{int}}(A_i) \le \ID(A_i)\IV(A_i)-k$. In particular, if $\ID(A_i) = 1$, and $A_i$ admits a unimodular triangulation (i.e., a decomposition into simplices with unit integer volume), then $\iota_{\textrm{int}}(A_i) = 0$, so all indecomposable elements in the cone $\R_+ A_i$ lie on the face $A_i$.
\end{thm}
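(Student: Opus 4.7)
The proof essentially amounts to organising the per-simplex counting argument sketched just before the statement and then summing over $j$.

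First, for each $A_{i,j}$ with vertices $\alpha_1, \dots, \alpha_n \in \mc O_K^+$, I consider the half-open parallelepiped $P_{i,j} = \{\sum_k \lambda_k \alpha_k : 0 \le \lambda_k < 1\}$, which is a fundamental domain for the sublattice $\Z\pb{\alpha_1, \dots, \alpha_n} \subseteq \Lambda$. Every $v \in \Lambda \cap \R_+ A_{i,j}$ then has a unique representation $v = p + \sum_k m_k \alpha_k$ with $p \in \Lambda \cap P_{i,j}$ and $m_k \in \Z_{\ge 0}$. Since the $\alpha_k$ are totally positive, any nonzero element of $\Lambda \cap P_{i,j}$ lies in $\mc O_K^+$, and so does $\sum_k m_k \alpha_k$ whenever some $m_k > 0$. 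Hence an indecomposable $v$ in the cone $\R_+ A_{i,j}$ must either be a vertex $\alpha_k$ (which lies on $A_{i,j}$, hence on $A_i$, contributing nothing to $\iota_{\textrm{int}}$), or an element of $\mc O_K^+ \cap P_{i,j}$; anything else produces a nontrivial splitting $v = p + \sum_k m_k \alpha_k$. The latter possibility gives at most $\#(\Lambda \cap P_{i,j}) - 1 = \IV(A'_{i,j}) - 1 = \IV(A_{i,j})\ID(A_{i,j}) - 1$ indecomposables, where $A'_{i,j}$ is the apex simplex with vertices $0, \alpha_1, \ldots, \alpha_n$.

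Next I sum over $j$. Every indecomposable counted by $\iota_{\textrm{int}}(A_i)$ lies in at least one $\R_+ A_{i,j}$ but off every $A_{i,j}$ (otherwise it would lie on $A_i$); indecomposables on a boundary shared by several simplicial cones may be double-counted, but only in a way that weakens the bound. All the $A_{i,j}$ share the affine hyperplane of $A_i$, giving $\ID(A_{i,j}) = \ID(A_i)$, while the integer volume is additive across a decomposition into simplices of the same dimension, so $\sum_{j=1}^k \IV(A_{i,j}) = \IV(A_i)$. Combining these,
\[
\iota_{\textrm{int}}(A_i) \;\le\; \sum_{j=1}^k \bigl(\IV(A_{i,j})\ID(A_{i,j}) - 1\bigr) \;=\; \ID(A_i)\IV(A_i) - k.
\]
The ``In particular'' clause is immediate: if $\ID(A_i) = 1$ and every $\IV(A_{i,j}) = 1$, the right-hand side equals $k - k = 0$.

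I do not anticipate a substantive obstacle; the plan is basically a tidy writeup of the counting argument already given in the paragraph preceding the statement. The one point requiring mild care is the unique-representation claim for points of the cone, which needs $\alpha_1, \dots, \alpha_n$ to be linearly independent (equivalently, $0, \alpha_1, \dots, \alpha_n$ affinely independent) — this is ensured because $A_i$ is a top-dimensional face of the sail, sitting in an integer hyperplane that avoids the origin.
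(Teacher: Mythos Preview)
Your proposal is correct and follows essentially the same approach as the paper, which presents the argument in the paragraph immediately preceding the theorem rather than in a separate proof environment. Your write-up is in fact slightly more careful than the paper's sketch: you explicitly separate out the vertex case (the $\alpha_k$ are not in the half-open $P_{i,j}$), and you spell out why $\ID(A_{i,j}) = \ID(A_i)$ and $\sum_j \IV(A_{i,j}) = \IV(A_i)$ before summing.
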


On the other hand, the totally positive integers lying in the parallelepipeds $P_{i,j}$ need not be indecomposable, because it could possibly be written as a sum of two other totally positive integers inside (or outside) of the parallelepiped. However, we shall show that when $K$ is a cubic field, then the totally positive integers in $P_{i,j}$ are indeed indecomposable. This gives an explicit formula for $\iota_{\textrm{int}}(A_i)$.

\begin{thm}\label{thm:cii} 
Suppose $[K:\Q] = 3$, and let $A_i$ be a face of the sail $\mc S_K$. Then we have 
\[
\iota_{\textrm{int}}(A_i) = (\ID(A_i)-1)\IV(A_i).
\]
\end{thm}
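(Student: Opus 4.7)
The plan is to combine the upper bound from \Cref{thm:ii} with a direct lower bound exploiting $n=3$. Since $A_i$ is a two-dimensional integer polytope, it admits a unimodular triangulation $A_i=\bigcup_{j=1}^{k}A_{i,j}$ with $\IV(A_{i,j})=1$ for every $j$ and $k=\IV(A_i)$. Substituting into \Cref{thm:ii} yields
\[
\iota_{\textrm{int}}(A_i)\le\ID(A_i)\IV(A_i)-k=(\ID(A_i)-1)\IV(A_i),
\]
and each half-open parallelepiped $P_{i,j}$ is a fundamental domain for the sublattice $\Z\alpha_1\oplus\Z\alpha_2\oplus\Z\alpha_3\subseteq\Lambda$ of index $\IV(A_{i,j})\ID(A_i)=\ID(A_i)$. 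Thus $P_{i,j}\cap\Lambda$ has exactly $\ID(A_i)$ points, of which the $\ID(A_i)-1$ nonzero ones are automatically totally positive (as nontrivial nonnegative combinations of the totally positive $\alpha_k$) and, by unimodularity of $A_{i,j}$, none lies on $A_{i,j}$.

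The cubic-specific core is to show each such $\beta\in P_{i,j}\cap\mc O_K^+$ is indecomposable. Assuming $\beta=\gamma_1+\gamma_2$ with $\gamma_l\in\mc O_K^+$, both $\gamma_l$ lie in the open Minkowski box $B(\beta)=\prod_{m=1}^{3}(0,\tau_m(\beta))$. Writing $\beta=\sum_k\lambda_k\alpha_k$ with $\lambda_k\in[0,1)$ and each $\gamma_l=\sum_k a_{l,k}\alpha_k+\beta_l'$ with $a_{l,k}\in\Z$ and $\beta_l'=\sum_k\lambda_{l,k}\alpha_k\in P_{i,j}\cap\Lambda$, matching $\alpha_k$-coefficients in $\gamma_1+\gamma_2=\beta$ gives $\lambda_{1,k}+\lambda_{2,k}-\lambda_k=c_k\in\{0,1\}$ and $a_{1,k}+a_{2,k}=-c_k$ for $k=1,2,3$. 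The main obstacle is deriving a contradiction from these relations together with the total positivity inequalities $\sum_k(a_{l,k}+\lambda_{l,k})\tau_m(\alpha_k)>0$ for $l=1,2$ and $m=1,2,3$; I would split cases on whether $\beta_l'=0$ (using $\beta_l'\in\mc K_K\Rightarrow\sum_k\lambda_{l,k}\ge 1$ in the nontrivial case) and leverage the invertibility of the $3\times 3$ matrix $(\tau_m(\alpha_k))$ combined with the bound $\sum_k\lambda_k<3$ — the decisive use of $n=3$ — to force at least one of $\gamma_1,\gamma_2$ to vanish.

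Finally, the $k(\ID(A_i)-1)$ nonzero lattice points across the $P_{i,j}$ should produce $\IV(A_i)(\ID(A_i)-1)$ distinct interior indecomposables of $\R_+A_i\setminus A_i$, matching the upper bound. The half-open convention rules out duplication within a single simplicial cone; for lattice points potentially shared between two adjacent $P_{i,j}$ and $P_{i,j'}$ — sitting in the two-dimensional sub-cone $\R_+\alpha_k+\R_+\alpha_l$ spanned by a common edge — one needs to verify, via the cubic geometry together with unimodularity of the triangulation, either that no such shared nonzero lattice point exists in the relevant half-open parallelogram, or that any such point lies on $A_i$ and is therefore excluded from $\iota_{\textrm{int}}(A_i)$.
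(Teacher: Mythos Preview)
Your framework is correct: take a unimodular triangulation (so $k=\IV(A_i)$), use \Cref{thm:ii} for the upper bound, and count the $\ID(A_i)-1$ nonzero lattice points in each $P_{i,j}$. The gap is in the indecomposability step. The route you sketch --- working in the Minkowski box, decomposing $\gamma_l$ into an integer part plus a residue $\beta_l'$, and invoking the bound $\sum_k\lambda_k<3$ together with invertibility of $(\tau_m(\alpha_k))$ --- does not close. From $\gamma_1+\gamma_2=\beta$ and the face inequality $\Tr_{K/\Q}(\delta\gamma_l)\ge\ID(A_i)$ you only get $\sum_k\lambda_k\ge 2$, which is compatible with $\sum_k\lambda_k<3$; no contradiction follows, and nothing about the matrix $(\tau_m(\alpha_k))$ beyond its invertibility helps.

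What is missing is one linear functional rather than three. The paper works entirely with $\Tr_{K/\Q}(\delta\cdot)$, where $\delta\in\mc O_K^{\vee,+}$ cuts out the face via $\Tr_{K/\Q}(\delta\alpha_k)=\ID(A_i)$, so that $\Tr_{K/\Q}(\delta\alpha)\ge\ID(A_i)$ for every $\alpha\in\mc O_K^+$. The key observation is the \emph{reflection}: if $\beta=\sum_k\lambda_k\alpha_k\in P_{i,j}\cap\mc O_K^+$, then $\beta':=\alpha_1+\alpha_2+\alpha_3-\beta=\sum_k(1-\lambda_k)\alpha_k$ is again a totally positive integer, so $\Tr_{K/\Q}(\delta\beta')\ge\ID(A_i)$ and hence $\Tr_{K/\Q}(\delta\beta)\le 2\,\ID(A_i)$; unimodularity of $A_{i,j}$ makes both inequalities strict. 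Now a putative decomposition $\beta=\gamma_1+\gamma_2$ forces $\Tr_{K/\Q}(\delta\gamma_l)<\ID(A_i)$ for some $l$, which is impossible. This is where $n=3$ really enters: the sum $\Tr_{K/\Q}(\delta(\alpha_1+\alpha_2+\alpha_3))=3\,\ID(A_i)$ is what produces the threshold $2\,\ID(A_i)$.

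Your double-counting worry is legitimate but is resolved by the same trick one dimension down. If a nonzero lattice point $\beta=\lambda_1\alpha_1+\lambda_2\alpha_2$ with $\lambda_1,\lambda_2\in[0,1)$ lay over a shared edge, then $\beta$ and $\alpha_1+\alpha_2-\beta$ are both in $\mc O_K^+$, so both have $\Tr_{K/\Q}(\delta\cdot)\ge\ID(A_i)$; since their traces sum to $2\,\ID(A_i)$, equality holds, placing $\beta$ on the primitive segment $[\alpha_1,\alpha_2]$ and forcing $\beta\in\{\alpha_1,\alpha_2\}$, a contradiction. Hence the $P_{i,j}$ contribute disjoint nonzero lattice points and the count is exact.
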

\begin{proof}
We observe that if $[K:\Q] = 3$, then $A_i$ is $2$-dimensional, and always admits a unimodular triangulation $A_i = \bigcup_{j=1}^k A_{i,j}$ (see \cite[Proposition 2.17]{Karpenkov2022}). By \Cref{lem:ct}, we find $\delta \in \mc O_K^\vee$ such that $A_i$ is contained in the plane $\Tr_{K/\Q}(\delta(-)) = \ID(A_i)$. By the construction of the sail $\mc S_K$, we deduce that $\Tr_{K/\Q}(\delta \alpha) \ge \ID(A_i)$ for every $\alpha\in\mc O_K^+$. 

Now consider the parallelepiped $P_{i,j}$ associated to the simplex $A_{i,j} = \pb{\alpha_1,\alpha_2,\alpha_3}$. Let $\gamma \in \mc O_K^+ \cap P_{i,j}$. Then the element $\gamma' := \alpha_1+\alpha_2+\alpha_3 - \gamma$ is contained in the closure $\ol{P_{i,j}}$, and we have 
\[
\Tr_{K/\Q}(\delta\gamma) + \Tr_{K/\Q}(\delta\gamma') = \Tr_{K/\Q}(\delta(\alpha_1+\alpha_2+\alpha_3)) = 3\ID(A_i). 
\]
Since we know that $\Tr_{K/\Q}(\delta\alpha) \ge \ID(A_i)$ for $\alpha\in\mc O_K^+$, we immediately deduce that \[
\ID(A_i) \le \Tr_{K/\Q}(\delta\gamma)\le 2\ID(A_i).
\]
The inequality above can be made strict. To see this, we observe that $\alpha_1,\alpha_2,\alpha_3$ are the only lattice points on $A_{i,j}$, which is the intersection of $\ol{P_{i,j}}$ and the plane $\Tr_{K/\Q}(\delta(-)) = \ID(A_i)$. Since $\gamma$ and $\gamma'$ are not a sum of $\alpha_i$'s by construction, it follows that $\Tr_{K/\Q}(\delta\gamma), \Tr_{K/\Q}(\delta\gamma') \ne \ID(A_i)$. So we conclude a strict inequality
\[
\ID(A_i) < \Tr_{K/\Q}(\delta\gamma) < 2\ID(A_i)
\]
as claimed. Suppose $\gamma$ admits a decomposition $\gamma = \gamma_1 + \gamma_2$ as a sum of totally positive integers. Then one of $\Tr_{K/\Q}(\delta\gamma_1)$ and $\Tr_{K/\Q}(\delta\gamma_2)$ must be smaller than $\ID(A_i)$, a contradiction. So $\gamma$ is indecomposable. Since there are $\ID(A_i)-1$ such indecomposable elements in $P_{i,j}$, summing over $j$ yields the statements for cubic fields.
\end{proof}

Using \Cref{thm:cii} and the knowledge of the sail $\mc S_K$ of a totally real cubic field $K$, we may compute $\iota(K)$ without the need to completely characterise the indecomposable elements in $\mc O_K^+$, a process which is often difficult or tedious. We demonstrate this with the following example.
\begin{expl}\label{expl:scf} 
We consider Shanks' family of the simplest cubic fields \cite{Shanks1974}, defined as $K = \Q(\rho)$, where $\rho$ is the largest root of the polynomial
\[
x^3 - ax^2 - (a+3)x - 1,
\]
where $a \ge -1$ is an integer. Suppose also that $\mc O_K = \Z[\rho]$; it is well-known  that there are infinitely many such fields (see \cite[Section 3]{Cusick1984}, or \cite[Section 2]{KT2023}). Then $1, \rho, \rho^2$ is an integral basis of $\mc O_K$. The group $\mc O_K^{\times,+}$ of totally positive units is generated by $\varepsilon_1 = \rho^2$, and $\varepsilon_2 = 1+2\rho+\rho^2$. From \cite[Chapter 22.6]{Karpenkov2022}, we see that the sail consists of translates of the following two faces by totally positive units:
\[
A_1 = \pb{1,\varepsilon_1,\varepsilon_2}, \quad A_2 = \pb{1,\varepsilon_1,\varepsilon_1\varepsilon_2^{-1}}. 
\]
In terms of the basis $1,\rho,\rho^2$, we compute 
\[
\varepsilon_1\varepsilon_2^{-1} = -(a+1)-(a^2+3a+3)\rho+(a+2)\rho^2.
\]
It is then straightforward to compute that 
\[
\ID(A_1) = 2, \quad \ID(A_2) = 1,
\] 
and
\[
\IV(A_1) = 1, \quad \IV(A_2) = a^2+3a+3.
\]

By \Cref{thm:cii}, we see that $\iota_{\textrm{int}}(A_1) = 1$, and $\iota_{\textrm{int}}(A_2) = 0$. Meanwhile, since the edges of $A_2$ contains no other integral points other than the vertices, by Pick's formula we see that there are $\frac 12(a^2+3a+2)$ integral points in the interior of the face $A_2$, all of which are indecomposable. So we conclude that $\iota(K) = \frac 12(a^2+3a+6)$. This agrees with the characterisation of the indecomposable elements in $\mc O_K^+$ found in \cite{KT2023}. In particular, the unique indecomposable element that does not lie on the sail is given by $1+\rho+\rho^2$.
\end{expl}

Finally, we give a useful criterion to detect sails, which we will use in later sections.
\begin{lem}\label{lem:pis} 
Let $S$ be a polytope of dimension $n-1$ with vertices in $\mc O_K^+$. Suppose $S$ is contained in the hyperplane $\Tr_{K/\Q}(\delta(-)) = 1$ for some $\delta\in\mc O_K^{\vee,+}$. Then $S \subseteq \mc S_K$.
\end{lem}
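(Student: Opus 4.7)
The plan is to exhibit the hyperplane $\Tr_{K/\Q}(\delta(-))=1$ as a supporting hyperplane of the Klein polyhedron $\mc K_K$. Once this is known, anything that lies simultaneously inside $\mc K_K$ and inside the hyperplane automatically lies in $\partial\mc K_K = \mc S_K$, and $S$ satisfies both conditions: it lies in the hyperplane by hypothesis, and it lies in $\mc K_K$ because its vertices are in $\mc O_K^+\subseteq\Lambda^+\subseteq\mc K_K$ and $S$ is the convex hull of those vertices.

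The crucial step is therefore to check the inequality $\Tr_{K/\Q}(\delta\alpha)\ge 1$ for every $\alpha\in\mc O_K^+$. This uses both hypotheses on $\delta$ in an essential way: since $\delta,\alpha\in K^+$, the product $\delta\alpha$ is totally positive, so each conjugate $\tau_i(\delta\alpha)>0$ and hence $\Tr_{K/\Q}(\delta\alpha)>0$; on the other hand, since $\delta\in\mc O_K^\vee$ and $\alpha\in\mc O_K$, the trace $\Tr_{K/\Q}(\delta\alpha)$ is an integer. A positive integer is at least $1$.

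Next, I would promote this to a statement about $\mc K_K$ itself. The $\Q$-linear map $\alpha\mapsto\Tr_{K/\Q}(\delta\alpha)$ extends to an $\R$-linear functional $L_\delta$ on $\R^n$ via the Minkowski embedding, namely $L_\delta(x_1,\ldots,x_n)=\sum_{i=1}^n \tau_i(\delta)x_i$. The previous paragraph gives $\Lambda^+\subseteq\{L_\delta\ge 1\}$, and because the latter is a convex set, it contains $\mc K_K=\Conv(\Lambda^+)$. Therefore the hyperplane $L_\delta=1$ is supporting, its intersection with $\mc K_K$ is contained in $\partial\mc K_K=\mc S_K$, and $S\subseteq\mc K_K\cap\{L_\delta=1\}\subseteq\mc S_K$, as required.

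There is no real obstacle here beyond making sure both hypotheses on $\delta$ are invoked at the right moment: total positivity guarantees $\Tr_{K/\Q}(\delta\alpha)>0$, while the codifferent condition turns a strict positivity into $\ge 1$, which is exactly what is needed for the hyperplane to touch $\mc K_K$ from the correct side. The dimension assumption on $S$ is harmless and in fact unused in the argument above; it only ensures that $S$ is genuinely a face rather than a lower-dimensional piece.
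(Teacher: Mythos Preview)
Your proof is correct and follows essentially the same approach as the paper's: show $\Tr_{K/\Q}(\delta\alpha)\ge 1$ for all $\alpha\in\mc O_K^+$ (the paper states this as an observation, while you spell out the use of total positivity and the codifferent condition), then conclude that $S$ lies in both $\mc K_K$ and a supporting hyperplane, hence in $\partial\mc K_K=\mc S_K$. Your added remark that the dimension hypothesis is not actually used is also accurate.
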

\begin{proof}
Obviously the polytope $S$ is contained in the Klein polyhedron $\mc K_K$. On the other hand, we observe that $\Tr_{K/\Q}(\delta\alpha)\ge 1$ for every $\alpha\in\mc O_K^+$. So we see that $S \subseteq \partial \mc K_K = \mc S_K$.
\end{proof}

\section{Unit signature ranks and indecomposable elements}

\subsection{Units in biquadratic fields}\label{sect:bqu} 
Let us now turn to the case of biquadratic fields.
First we recall the following theorem of Kubota, which characterises for real biquadratic fields $K$ the group of units $\mc O_K^+$.

\begin{thm}[{\cite[Satz 1]{Kubota1956}}]\label{thm:k} 
	Let $K$ be a real biquadratic field, with quadratic subfields $K_1, K_2, K_3$. For $1\le i \le 3$, let $\varepsilon_i$ denote the fundamental unit of $K_i$. 
	\ben
	\item Suppose $\NN_{K_i/\Q}(\varepsilon_i) = 1$ for some $1\le i \le 3$. Then a system of fundamental units of $\mc O_K^\times$ is given by one of the following (up to relabelling of units $\varepsilon_i$):\\
	\NumTabs{2} \begin{enumerate*}[label=(\roman*),itemjoin=\tab]
		\item $\varepsilon_1,\varepsilon_2,\varepsilon_3$,
		\item $\sqrt{\varepsilon_1},\varepsilon_2,\varepsilon_3$,
		\item $\sqrt{\varepsilon_1},\sqrt{\varepsilon_2},\varepsilon_3$,
		\item $\sqrt{\varepsilon_1\varepsilon_2},\varepsilon_2,\varepsilon_3$,
		\item $\sqrt{\varepsilon_1\varepsilon_2},\sqrt{\varepsilon_3},\varepsilon_2$,
		\item $\sqrt{\varepsilon_1\varepsilon_2},\sqrt{\varepsilon_2\varepsilon_3},\sqrt{\varepsilon_3\varepsilon_1}$,
		\item $\sqrt{\varepsilon_1\varepsilon_2\varepsilon_3},\varepsilon_2,\varepsilon_3$,
	\end{enumerate*}\NumTabs{1}\\
	where the units $\varepsilon_i$ under the radical always have $\NN_{K_i/\Q}(\varepsilon_i) = 1$.
	\item Suppose $\NN_{K_i/\Q}(\varepsilon_i) = -1$ for every $1\le i \le 3$. Then a system of fundamental units of $\mc O_K^\times$ is given by one of the following:\\
	\NumTabs{2} \begin{enumerate*}[label=(\roman*),itemjoin=\tab]
		\item $\varepsilon_1,\varepsilon_2,\varepsilon_3$,
		\item $\sqrt{\varepsilon_1\varepsilon_2\varepsilon_3},\varepsilon_2,\varepsilon_3$.
	\end{enumerate*}\NumTabs{1}
	\ee
\end{thm}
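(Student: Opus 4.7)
The plan is to study the subgroup $E := \pb{-1,\varepsilon_1,\varepsilon_2,\varepsilon_3} \subseteq \mc O_K^\times$ generated by the subfield units, show that $\mc O_K^\times / E$ is a finite elementary abelian $2$-group, and then classify the possible systems of fundamental units case-by-case according to the norm conditions $\NN_{K_i/\Q}(\varepsilon_i) \in \{\pm 1\}$.

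First I would invoke Dirichlet's unit theorem: since $K$ is totally real of degree $4$, $\mc O_K^\times / \{\pm 1\}$ is free abelian of rank $3$. A short argument using the Galois action --- any putative relation $\varepsilon_1^{a_1}\varepsilon_2^{a_2}\varepsilon_3^{a_3} = \pm 1$, combined with its images under the three nontrivial $\sigma \in \Gal(K/\Q) \cong (\Z/2\Z)^2$ (noting $\sigma(\varepsilon_j) = \pm \varepsilon_j^{-1}$ whenever $\sigma$ does not fix $K_j$ pointwise), forces all $a_i = 0$ --- shows that $\varepsilon_1,\varepsilon_2,\varepsilon_3$ are multiplicatively independent modulo $\{\pm 1\}$. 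Hence $E$ has full rank $3$ and $q := [\mc O_K^\times : E]$ is finite.

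The heart of the proof is the reduction that $\eta^2 \in E$ for every $\eta \in \mc O_K^\times$. Let $\sigma_1,\sigma_2,\sigma_3$ denote the three nontrivial elements of $\Gal(K/\Q)$, with $\sigma_i$ fixing $K_i$ pointwise. Each $\eta\sigma_i(\eta)$ is a unit lying in $K_i$, so equals $\pm\varepsilon_i^{a_i}$ for some $a_i \in \Z$. Multiplying these three relations and using $\eta \sigma_1(\eta)\sigma_2(\eta)\sigma_3(\eta) = \NN_{K/\Q}(\eta) = \pm 1$, I obtain
\[
\eta^2 = \pm\varepsilon_1^{a_1}\varepsilon_2^{a_2}\varepsilon_3^{a_3} \in E.
\]
Consequently $\mc O_K^\times / E$ embeds into the $\F_2$-vector space $E/E^2$, and any representative of a nontrivial coset has the form $\eta = \sqrt{\pm\varepsilon_1^{a_1}\varepsilon_2^{a_2}\varepsilon_3^{a_3}}$ with $(a_1,a_2,a_3) \in \{0,1\}^3$.

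The classification then reduces to a finite enumeration: for each of the $16$ candidate radicands, decide whether it is a square in $K^\times$. Two conditions must hold: (i) the radicand is totally positive, constraining the signs of the $\varepsilon_i$ under each real embedding; and (ii) writing $K = K_i(\sqrt{d_i})$, the radicand must lie in $K_i^{\times 2} \cup d_i K_i^{\times 2}$, which via norms from $K_i$ translates into explicit multiplicative constraints on the three $\NN_{K_j/\Q}(\varepsilon_j) \in \{\pm 1\}$. For example, $\sqrt{\varepsilon_1} \in K$ essentially requires $\NN_{K_1/\Q}(\varepsilon_1) = 1$, and $\sqrt{\varepsilon_1\varepsilon_2\varepsilon_3} \in K$ requires the product $\prod_j \NN_{K_j/\Q}(\varepsilon_j)$ to lie in $\Q^{\times 2}$. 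The main obstacle is the bookkeeping of this case split: one must run through all sixteen subcases, determine which candidate roots can coexist so as to maximise the index $[\mc O_K^\times : E] \in \{1,2,4\}$, verify that the resulting systems are genuinely \emph{fundamental} (not merely integral), and confirm the enumeration is exhaustive up to relabeling of the three subfields $K_1,K_2,K_3$. The all-norm-$(-1)$ regime of part (2) is the cleanest, since positivity rules out nearly every radicand and leaves only $\sqrt{\varepsilon_1\varepsilon_2\varepsilon_3}$ as a possible additional generator, while the mixed-norm regime of part (1) demands a fuller enumeration producing the seven listed configurations.
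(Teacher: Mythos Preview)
The paper does not prove this theorem at all: it is quoted as \cite[Satz 1]{Kubota1956} and used as a black box, so there is no in-paper proof to compare against. Your sketch is essentially Kubota's original argument --- bound the index $[\mc O_K^\times : E]$ via the identity $\eta^2 = \pm\varepsilon_1^{a_1}\varepsilon_2^{a_2}\varepsilon_3^{a_3}$ coming from the three relative norms $\eta\,\sigma_i(\eta) \in \mc O_{K_i}^\times$, then enumerate the possible square roots.

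One substantive point you have glossed over: you assert $[\mc O_K^\times : E] \in \{1,2,4\}$, but your embedding into $E/E^2$ only gives $\le 8$ a priori. Ruling out index $8$ (equivalently, showing that $\sqrt{\varepsilon_1},\sqrt{\varepsilon_2},\sqrt{\varepsilon_3}$ cannot all lie in $K$) is a genuine step in Kubota's paper and is not just bookkeeping; it uses that $\sqrt{\varepsilon_i} \in K$ forces $\sqrt{\varepsilon_i}$ to generate $K$ over some $K_j$ with $j\neq i$, and three such conditions are incompatible. You should either supply that argument or flag it explicitly as the nontrivial part of the ``finite enumeration''.
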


\begin{lem}\label{lem:un1} 
	Let $K$ be a real biquadratic field with $\sgnrk(K) \le 3$. Then we have $\NN_{K/\Q}(\eta) = 1$ for every $\eta\in\mc O_K^\times$.
\end{lem}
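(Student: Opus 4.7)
My plan is to exploit the Galois symmetry of $K$. Let $G := \Gal(K/\Q) \cong (\Z/2\Z)^2$. Fixing one real embedding $\tau_1 : K \hookrightarrow \R$ and labeling the four embeddings by $\tau_g := \tau_1 \circ g$ for $g \in G$ identifies the signature space $\F_2^4$ with the group algebra $\F_2[G]$ (the regular representation of $G$). The identity $\sgn(g(\eta))_h = \sgn(\tau_{hg}(\eta)) = \sgn(\eta)_{hg}$ then shows that the signature homomorphism $\sgn : \mc O_K^\times \to \F_2[G]$ intertwines the Galois action on $\mc O_K^\times$ with the regular action on $\F_2[G]$. Consequently the unit signature group $V := \sgn(\mc O_K^\times)$ is a $G$-submodule of $\F_2[G]$, equivalently an ideal of this commutative ring, and $\sgnrk(K) = \dim_{\F_2} V$.

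The next step is to translate the norm condition into algebra. Since $\NN_{K/\Q}(\eta) = \prod_{g\in G} \tau_g(\eta)$ is a product of four real numbers, its sign is positive exactly when $\sgn(\eta)$ has an even number of negative entries, i.e.\ when $\epsilon(\sgn(\eta)) = 0$, where $\epsilon : \F_2[G] \to \F_2$, $\sum a_g g \mapsto \sum a_g$, is the augmentation map. For a unit $\eta$, $\NN_{K/\Q}(\eta) \in \{\pm 1\}$, so the lemma is equivalent to $V \subseteq \ker(\epsilon)$. I will prove the contrapositive: if $V \not\subseteq \ker(\epsilon)$, then $\sgnrk(K) = 4$.

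To conclude, I invoke the structural fact that for a $2$-group $G$ in characteristic $2$, the group algebra $\F_2[G]$ is a local Artinian ring whose unique maximal ideal coincides with the augmentation ideal $\ker(\epsilon)$; this follows quickly from the nilpotency of the generators $1+g$, $g \in G$ (each satisfies $(1+g)^2 = 0$). Hence any $v_0 \in V$ with $\epsilon(v_0) \neq 0$ is a unit of $\F_2[G]$, and since $V$ is an ideal containing a unit, $V = \F_2[G]$, forcing $\sgnrk(K) = 4$ and contradicting $\sgnrk(K) \le 3$. The argument is essentially formal and I foresee no genuine obstacle: the only nontrivial input is the locality of $\F_2[G]$, which is standard, and the rest is a direct dictionary between the arithmetic (sign of $\NN_{K/\Q}$) and the module theory (augmentation map).
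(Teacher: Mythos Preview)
Your proof is correct and takes a genuinely different, more conceptual route than the paper. The paper argues by cases via Kubota's classification (Theorem~\ref{thm:k}): when some $\NN_{K_i/\Q}(\varepsilon_i)=1$ it writes a general unit as $\pm\sqrt{\varepsilon_1^{m_1}\varepsilon_2^{m_2}\varepsilon_3^{m_3}}$ and computes $\eta^{1+\sigma_1+\sigma_2+\sigma_3}$ explicitly; when all $\NN_{K_i/\Q}(\varepsilon_i)=-1$ it notes that $-1,\varepsilon_1,\varepsilon_2$ already span a $3$-dimensional signature space contained in $\ker(\NN)$. By contrast, you bypass Kubota entirely: identifying the signature space with $\F_2[G]$ and observing that the image $V=\sgn(\mc O_K^\times)$ is an ideal, you reduce the lemma to the purely algebraic fact that any proper ideal of the local ring $\F_2[(\Z/2\Z)^2]$ lies in the augmentation ideal. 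Your argument is shorter, avoids case analysis, and generalises verbatim to any totally real Galois $2$-extension $K/\Q$ (e.g.\ multiquadratic fields): if $\sgnrk(K)<[K:\Q]$ then every unit has norm $1$. The paper's approach, while more explicit, is tailored to the biquadratic case and leans on the structural input of Kubota's theorem that you do not need here.
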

\begin{proof}
	First suppose $\NN_{K_i/\Q}(\varepsilon_i) = 1$ for some $1\le i \le 3$. Note that this assumption actually implies $\sgnrk(K) \le 3$ (see \cite[Remark 1]{DDK2019}). We see from above that every $\eta\in\mc O_K^\times$ is of the form $\eta = \pm \sqrt{\varepsilon_1^{m_1}\varepsilon_2^{m_2}\varepsilon_3^{m_3}}$, where $m_i\in\Z$ is even unless $\NN_{K_i/\Q}(\varepsilon_i) = 1$.
	
	Let $\sigma_i$ denote the non-trivial element in $\Gal(K/K_i)$, and write $\eta^{\sigma_i} := \sigma_i(\eta)$. In particular, we have 
	\begin{equation*}
	\varepsilon_j^{\sigma_i} = \sigma_i(\varepsilon_j) = \begin{cases} \varepsilon_j & \text{ if } i=j,\\ \NN_{K_j/\Q}(\varepsilon_j) & \text{ if } i\ne j.\end{cases}
	\end{equation*}
	Then we have
	\begin{align*}
		\eta^{2(1+\sigma_1)} &= \varepsilon_1^{2m_1} \implies \eta^{1+\sigma_1} = (-1)^{\nu_1}\varepsilon_1^{m_1} \implies \eta^{\sigma_1} = (-1)^{\nu_1} \varepsilon_1^{m_1} \eta^{-1},\\
		\eta^{2(1+\sigma_2)} &= \varepsilon_2^{2m_2} \implies  \eta^{1+\sigma_2} = (-1)^{\nu_2}\varepsilon_2^{m_2} \implies \eta^{\sigma_2} = (-1)^{\nu_2} \varepsilon_2^{m_2} \eta^{-1},\\
	\end{align*}
	for some $\nu_1,\nu_2\in\Z$, and
	\begin{align*}
		\eta^{\sigma_3} = \eta^{\sigma_1\sigma_2} = \rb{(-1)^{\nu_1}\varepsilon_1^{m_1} \eta^{-1}}^{\sigma_2} = (-1)^{\nu_1+\nu_2} \varepsilon_1^{-m_1} \varepsilon_2^{-m_2} \eta.
	\end{align*}
	This implies
	\begin{equation*}
		\NN_{K/\Q}(\eta) = \eta^{1+\sigma_1+\sigma_2+\sigma_3} = \eta \rb{(-1)^{\nu_1}\varepsilon_1^{m_1}\eta^{-1}} \rb{(-1)^{\nu_2}\varepsilon_2^{m_2}\eta^{-1}} \rb{(-1)^{\nu_1+\nu_2}\varepsilon_1^{-m_1}\varepsilon_2^{-m_2} \eta} = 1.
	\end{equation*}
	
	Now suppose $\NN_{K_i/\Q}(\varepsilon_i) = -1$ for every $1\le i \le 3$. In this case, we observe that the signatures of $-1$, $\varepsilon_1$, and $\varepsilon_2$ are linear independent over $\F_2$. By the assumption $\sgnrk(K) \le 3$, this says the unit signature group of $K$ is generated by these signatures. Since we have
	\[
	\NN_{K/\Q}(-1) = \NN_{K/\Q}(\varepsilon_1) = \NN_{K/\Q}(\varepsilon_2) = 1,
	\]
	the lemma follows.
\end{proof}

\begin{lem}\label{lem:usc} 
	Let $K$ be a real biquadratic field with $\sgnrk{K}\ge 3$. Let $K_i = \Q(\sqrt{D_i})$, $1\le i\le 3$, be the quadratic subfields of $K$, with $D_i>1$ squarefree. Then for each $1\le i \le 3$, there exists a unit $\eta_i \in \mc O_K^\times$ with the same signature as $\sqrt{D_i}$.
\end{lem}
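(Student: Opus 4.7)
The plan is to compute the signatures of $\sqrt{D_1}, \sqrt{D_2}, \sqrt{D_3}$ explicitly, observe that they live in a natural $3$-dimensional subspace of $\F_2^4$, and then use the hypothesis $\sgnrk(K)\ge 3$ together with \Cref{lem:un1} to force the unit signature group to contain these three vectors. First I would order the four real embeddings of $K$ as $\tau_{++},\tau_{+-},\tau_{-+},\tau_{--}$ according to the signs they assign to the pair $(\sqrt{D_1},\sqrt{D_2})$. A direct computation of these four images gives
\[
\sgn(\sqrt{D_1}) = (0,0,1,1), \quad \sgn(\sqrt{D_2}) = (0,1,0,1), \quad \sgn(\sqrt{D_3}) = (0,1,1,0)
\]
in $\F_2^4$ (for a suitable positive choice of $\sqrt{D_3}$: writing $D_1 D_2 = m^2 D_3$ with $m \in \Z_{>0}$, the signature of $\sqrt{D_3}$ matches the coordinate-wise product of the signatures of $\sqrt{D_1}$ and $\sqrt{D_2}$). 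In particular, each of these three vectors lies in the codimension-$1$ subspace $V := \{v \in \F_2^4 : v_1+v_2+v_3+v_4 = 0\}$ of $\F_2^4$.

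Next I would split on the value of $\sgnrk(K)$. If $\sgnrk(K) = 4$, then the unit signature group equals all of $\F_2^4$ and each of the three target signatures is trivially realised by some $\eta_i \in \mc O_K^\times$. If $\sgnrk(K) = 3$, I would invoke \Cref{lem:un1}, which gives $\NN_{K/\Q}(\eta) = 1$ for every unit $\eta \in \mc O_K^\times$. Since $\NN_{K/\Q}(\eta) = \prod_i \tau_i(\eta)$ has sign $(-1)^{\#\{i : \tau_i(\eta) < 0\}}$, this forces the unit signature group to be contained in $V$. But then it is a subspace of $V$ of dimension $\sgnrk(K) = 3 = \dim V$, so it must equal $V$, and again each of the three signatures above is attained by some unit $\eta_i$.

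The main conceptual ingredient is already packaged in \Cref{lem:un1} (which rests on Kubota's explicit classification of fundamental unit systems in \Cref{thm:k}); after that, the argument is just a dimension count combined with the standard identification between the parity of the number of negative entries of $\sgn(\eta)$ and the sign of $\NN_{K/\Q}(\eta)$. I do not anticipate a serious obstacle; the only thing to be mildly careful about is the convention identifying $\sqrt{D_3}$ with $\sqrt{D_1}\sqrt{D_2}/m$ so that the signature of $\sqrt{D_3}$ is the expected product, and the case distinction on $\sgnrk(K) \in \{3,4\}$ so that \Cref{lem:un1} is applied only when its hypothesis $\sgnrk(K) \le 3$ holds.
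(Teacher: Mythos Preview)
Your proposal is correct and follows essentially the same approach as the paper: split on $\sgnrk(K)\in\{3,4\}$, and in the rank-$3$ case use \Cref{lem:un1} plus a dimension count to identify the unit signature group with the kernel $V=\ker(\NN)$. The only cosmetic difference is that you compute $\sgn(\sqrt{D_i})$ explicitly from a chosen ordering of embeddings, whereas the paper shows $\sgn(\sqrt{D_i})\in V$ via the identity $\NN_{K/\Q}(\sqrt{D_i})=(\NN_{K_i/\Q}(\sqrt{D_i}))^2>0$.
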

\begin{proof}
	If $\sgnrk(K) = 4$, then the statement is obvious because $\mc O_K^\times$ has all signatures. Now suppose $\sgnrk(K) = 3$. By \Cref{lem:un1}, we see that for $\eta\in\mc O_K^\times$, the signature $\sgn(\eta)$ lies in the kernel of the surjective map
	\[
	\NN: \F_2^4 \to \F_2, \quad (s_1,\ldots, s_4) \mapsto \sum_i s_i.
	\]
	Since $\sgnrk(K) = 3 = \dim_{\F_2} \ker(\NN)$, we see that $\mc O_K^\times$ has all signatures in $\ker(\NN)$. The statement then follows, because $\NN_{K/\Q}(\sqrt{D_i}) = (\NN_{K_i/\Q}(\sqrt{D_i}))^2  > 0$, which implies $\sgn(\sqrt{D_i}) \in \ker(\NN)$. 
\end{proof}

\subsection{Continued fraction}\label{sect:cf}
Now we recall some well-known results about indecomposable elements in real quadratic fields and continued fractions. See \cite{JW2009,Khinchin1964,Perron1977} for detailed discussion on the subject. Let $K = \Q(\sqrt{D})$ be a real quadratic field, with $D>1$ squarefree. We write
\[
\omega_D := \begin{cases} \sqrt{D} & \text{if } D\equiv 2,3\pmod{4},\\ \frac{1+\sqrt{D}}2 & \text{if } D\equiv 1\pmod{4}.\end{cases}
\]
Then the ring of integers $\mc O_K$ is given by $\Z[\omega_D]$. As a quadratic irrational number, $-\ol{\omega_D}$ admits a periodic continued fraction
\[
-\ol{\omega_D} = [u_0; \ol{u_1,\ldots, u_s}].
\]
In particular, we have
\begin{equation}\label{eq:us} 
	u_s = \begin{cases} 2u_0 & \text{if } D\equiv 2,3\pmod{4},\\ 2u_0+1 & \text{if } D\equiv 1\pmod{4}.\end{cases}
\end{equation}
From the continued fraction, we obtain a series of \emph{convergents}
\[
\frac{s_i}{t_i} := [u_0;u_1,\ldots,u_i], \quad i\ge 0.
\]
As a convention, we define $(s_{-1},t_{-1}) := (1,0)$. We associate with these convergents elements in $\mc O_K$, which by an abuse of notation are also called convergents:
\begin{equation}\label{eq:conv} 
	\beta_i := s_i + t_i \omega_D \in \mc O_K, \quad i\ge -1.
\end{equation}
We also define the \emph{semiconvergents}
\begin{equation}\label{eq:sc} 
	\beta_{i,l} := s_{i,l}+t_{i,l}\omega_D := (s_i+ls_{i+1}) + (t_i+lt_{i+1})\omega_D, \quad i\ge -1, \quad 0\le l\le u_{i+2}.
\end{equation}
The convergents $\beta_i$ (resp. semiconvergents $\beta_{i,l}$) are called \emph{upper} if $i$ is odd, and \emph{lower} if $i$ is even. We note that the upper (resp. lower) semiconvergents gives upper (resp. lower) bounds for $-\ol{\omega_D}$, and that the upper semiconvergents (and their conjugates) are precisely the indecomposable elements in $\mc O_K^+$ \cite{DS}.

Let $\tau_+,\tau_- : K\to\R$ be the two real embeddings which sends $\sqrt{D}$ to $\sqrt{D}$ and $-\sqrt{D}$ respectively. With respect to these embeddings, the upper semiconvergents have signature $(+,+)$, while the lower semiconvergents have signature $(+,-)$. For $i\ge -1$, we also define
\begin{equation}\label{eq:cde} 
	\delta_i := \begin{cases} \frac{(-1)^i}{2\sqrt{D}}(t_i\ol\omega_D-s_i) & \text{if } D\equiv 2,3\pmod{4},\\ \frac{(-1)^i}{\sqrt{D}}(t_i\ol\omega_D-s_i) & \text{if } D\equiv 1\pmod{4}.\end{cases}.
\end{equation}
It is easy to verify that $\delta_i$ lies in the codifferent $\mc O_K^\vee$, and $\delta_{i+1}$ has the same signature as $\beta_{i,l}$. Moreover, we have $\Tr_{K/\Q}(\delta_{i+1}\beta_{i,l}) = 1$ for every $0\le l \le u_{i+2}$. 

\subsection{Universal lattices and indecomposable elements}\label{sect:lattices}

Let $K$ be a totally real number field. We shall consider \emph{quadratic $\mc O_K$-lattices} $(\Lambda,Q)$ over $K$, that is, a finitely generated $\mc O_K$-module $\Lambda$ equipped with a quadratic form $Q$. The \emph{rank} of $(\Lambda,Q)$ is the $K$-dimension of $K\Lambda$. The lattice $(\Lambda,Q)$ is \emph{integral} if $Q(v) \in \mc O_K$ for every $v\in\Lambda$, totally positive if $Q$ is totally positive, and \emph{classical} if all the values in the associated symmetric bilinear form lie in $\mc O_K$. An integral, totally positive lattice $(\Lambda,Q)$ is \emph{universal} if it represents every element in $\mc O_K^+$. 

It is known that classical universal lattices exist over every totally real number field \cite{HKK1978}, so we may denote by $R_{\cls}(K)$ the minimal rank of a classical universal lattice. The following theorems say that $R_{\cls}(K)$ can be used to give a lower bound for $\iota(K)$. To state the theorem, we recall that the \emph{Pythagoras number} of a ring $R$ is the smallest integer $s(R)$ such that every sum of squares of elements of $R$ can be expressed as the sum of $s(R)$ squares.
\begin{thm}[{\cite[Proposition 7.1]{KT2023}}]\label{thm:mmr} 
	Let $K$ be a totally real number field of degree $n = [K:\Q]$. Let $s = s(\mc O_K)$ denote the Pythagoras number of $\mc O_K$, and $\mc S$ denote a set of representatives of classes of indecomposables in $\mc O_K$ up to multiplication by squares of units $\mc O_K^{\times 2}$. Then the quadratic form
	\[
	\sum_{\sigma\in\mc S} \sigma \rb{x_{1,\sigma}^2 + x_{2,\sigma}^2 + \ldots + x_{s,\sigma}^2}
	\] 
	is universal over $\mc O_K$, and has rank $s \cdot \#\mc S$. In particular, this implies $R_{\cls}(K) \le 2^{n-\sgnrk(K)} s \cdot \iota(K)$.
\end{thm}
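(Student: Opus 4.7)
The plan is to establish universality directly: given any $\alpha \in \mc O_K^+$, construct an explicit representation by the stated form. First, I would show that every $\alpha \in \mc O_K^+$ decomposes as a sum $\alpha = \alpha_1 + \cdots + \alpha_k$ of indecomposables by strong induction on the positive integer $\Tr_{K/\Q}(\alpha)$: if $\alpha$ is already indecomposable we are done, otherwise $\alpha = \beta+\gamma$ with $\beta,\gamma \in \mc O_K^+$, and both $\Tr_{K/\Q}(\beta),\Tr_{K/\Q}(\gamma)$ are strictly smaller positive integers, so induction applies. By the definition of $\mc S$, each $\alpha_j$ equals $\sigma_j u_j^2$ for some $\sigma_j \in \mc S$ and $u_j \in \mc O_K^\times$. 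Collecting like terms gives
\[
\alpha = \sum_{\sigma \in \mc S} \sigma\,\Bigl(\sum_{j:\,\sigma_j = \sigma} u_j^2\Bigr) = \sum_{\sigma \in \mc S} \sigma\, y_\sigma,
\]
where each $y_\sigma$ is, by construction, a sum of squares in $\mc O_K$ (with the convention $y_\sigma = 0$ if no term occurs).

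Next, I would invoke the Pythagoras number: because each $y_\sigma$ is manifestly a sum of squares in $\mc O_K$, we can rewrite $y_\sigma = x_{1,\sigma}^2 + \cdots + x_{s,\sigma}^2$ with $x_{i,\sigma} \in \mc O_K$ (padding with zeros if fewer than $s$ squares were needed). This exhibits $\alpha$ as a value of the stated form, establishing universality. The remaining checks are immediate: each $\sigma \in \mc S$ is an indecomposable and therefore totally positive, so the form is totally positive; the form is diagonal with coefficients in $\mc O_K$, so its Gram matrix is $\mc O_K$-valued and the lattice is classical; and its rank is visibly $s \cdot \#\mc S$.

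For the bound $R_{\cls}(K) \le 2^{n-\sgnrk(K)} s\, \iota(K)$, I would compare the two equivalence relations on indecomposables. Since $\mc O_K^{\times 2} \subseteq \mc O_K^{\times,+}$, each orbit of indecomposables under $\mc O_K^{\times,+}$ splits into at most $[\mc O_K^{\times,+} : \mc O_K^{\times 2}]$ orbits under $\mc O_K^{\times 2}$, giving $\#\mc S \le [\mc O_K^{\times,+} : \mc O_K^{\times 2}]\cdot \iota(K)$. By Dirichlet's unit theorem, $\lvert\mc O_K^\times / \mc O_K^{\times 2}\rvert = 2^n$, while the signature map identifies $\mc O_K^\times / \mc O_K^{\times,+}$ with the unit signature group of order $2^{\sgnrk(K)}$. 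Dividing yields $[\mc O_K^{\times,+} : \mc O_K^{\times 2}] = 2^{n-\sgnrk(K)}$, and combining with $R_{\cls}(K) \le s \cdot \#\mc S$ completes the argument.

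The argument is essentially bookkeeping once the decomposition step is in place; the only conceptually nontrivial point is the appeal to the Pythagoras number, which succeeds precisely because the decomposition into indecomposables guarantees each $y_\sigma$ really is a sum of squares in $\mc O_K$, not merely a totally positive integer (which in general need not be a sum of squares at all).
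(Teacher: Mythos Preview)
Your argument is correct. Note that the paper does not give its own proof of this statement; it simply cites \cite[Proposition 7.1]{KT2023}, and your write-up is precisely the standard proof behind that citation: decompose $\alpha$ into indecomposables by induction on the trace, rewrite each indecomposable as $\sigma u^2$ with $\sigma\in\mc S$, bundle the resulting sums of squares using the Pythagoras number, and then bound $\#\mc S$ via the index computation $[\mc O_K^{\times,+}:\mc O_K^{\times 2}]=2^{n-\sgnrk(K)}$.
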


\begin{thm}[{\cite[Corollary 3.3]{KY2021}}]\label{thm:pn} 
	Let $K$ be a totally real number field of degree $n = [K:\Q]$. Then the Pythagoras number $s(\mc O_K)$ is finite, and is bounded above by a function depending only on $n$. 
\end{thm}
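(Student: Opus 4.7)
The plan is to derive the finite, degree-only bound on $s(\mc O_K)$ by combining a field-level Pythagoras estimate, a local-global representation theorem, and a uniformity argument in $n$. As a warm-up I would first recall the classical (Siegel) fact that for any totally real number field, every totally positive element of $K$ is a sum of at most four squares in $K$, so $s(K)\le 4$. Consequently, any $\alpha\in\mc O_K$ that is expressible as a sum of squares in $\mc O_K$ at all admits a rational representation $\alpha = \sum_{i=1}^{4} (y_i/d)^2$ with $y_i,d\in\mc O_K$, equivalently $d^{2}\alpha = y_1^2+\cdots+y_4^2$ in $\mc O_K$. The whole difficulty is to convert this into an integral representation of $\alpha$ itself, possibly at the cost of using more (but still boundedly many) squares.

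For that conversion, the natural target is to show that there exists $N=N(n)$ depending only on the degree such that the form $x_1^2+\cdots+x_N^2$ represents every totally positive integer in $\mc O_K$ that is locally a sum of squares at every place. I would invoke the Hsia--Kitaoka--Kneser asymptotic local--global principle: for $N$ sufficiently large (in particular $N\ge 5$ so that the genus has only one spinor class at each relevant completion), the sum of $N$ squares represents every totally positive integer it represents locally everywhere and whose trace lies above some effective threshold. The local obstructions at finite primes $\mf p$ are controlled by the local Pythagoras numbers $s(\mc O_{K_\mf p})$, which are known to be bounded in terms of the local ramification indices; since ramification indices are bounded by $n$, one obtains uniform local control.

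The real obstacle, and the step that will require the most care, is making both the HKK threshold and the treatment of the remaining small-trace exceptions uniform in $n$, with no dependence on $\Delta_K$. On the archimedean and odd-residue side this is standard, but the $2$-adic analysis is delicate: dyadic Pythagoras numbers can jump, and an effective HKK bound at $2$ needs to be shown to depend only on the degree. Once the threshold $T=T(n)$ is in hand, the finitely many exceptional totally positive integers of trace below $T$ must be absorbed. Here I would use a Minkowski-type lattice count to bound the number of such exceptions by a function of $n$ alone, and then argue that adjoining a bounded number of further squares (for instance by representing each exception individually using $s(K)\le 4$ in $K$ together with a clearing-denominators step) suffices. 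Combining the uniform $N(n)$ with the uniform handling of the exceptions yields the desired degree-only bound $s(\mc O_K)\le C(n)$.
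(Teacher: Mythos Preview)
The paper does not prove this statement; it is quoted verbatim from \cite[Corollary~3.3]{KY2021} and used as a black box. The argument there is entirely different from yours and avoids any asymptotic local--global input. One fixes a $\Z$-basis $\omega_1,\dots,\omega_n$ of $\mc O_K$; if $\alpha=\sum_i\beta_i^2$ with $\beta_i=\sum_j b_{ij}\omega_j\in\mc O_K$, then $\alpha=\sum_{j,k}M_{jk}\,\omega_j\omega_k$ where $M=\sum_i b_i b_i^{\mathsf T}$ is an $n\times n$ positive semidefinite integer matrix written as a sum of integer rank-one squares. A classical result on the $g$-invariant (Mordell, Ko; finiteness in general due to Icaza) says any such $M$ is already a sum of at most $g(n)$ rank-one squares $c_\ell c_\ell^{\mathsf T}$, and unwinding gives $\alpha=\sum_{\ell=1}^{g(n)}\bigl(\sum_j c_{\ell j}\omega_j\bigr)^2$. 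Hence $s(\mc O_K)\le g(n)$, with no dependence on $\Delta_K$ ever entering.

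Your outline has a genuine gap precisely at the step you yourself label the ``real obstacle'': the Hsia--Kitaoka--Kneser threshold is not known to depend on $n$ alone, and in every effective version on record it grows with the discriminant. The same is true of the exceptional set: the number of totally positive integers of trace at most $T$ is not bounded by a function of $n$; it grows with the regulator and class number. Finally, your proposed treatment of the exceptions is circular: clearing denominators from a four-square representation in $K$ yields $d^{2}\alpha=\sum y_i^2$, which says nothing about $\alpha$ itself, and ``adjoining a bounded number of further squares'' to represent each exception presupposes exactly the bound you are trying to establish. So as written the argument does not close; the Gram-matrix route sidesteps all of these uniformity issues at once.
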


Using \Cref{thm:mmr,thm:pn}, the following is immediate.
\begin{cor}\label{cor:rfif} 
	Let $K$ be a totally real number field. Then we have $R(K)\leq R_{\cls}(K) \ll \iota(K)$ for some constant depending only on $n = [K:\Q]$. 
\end{cor}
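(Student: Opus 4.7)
The plan is to read \Cref{cor:rfif} as a direct formal consequence of \Cref{thm:mmr} and \Cref{thm:pn}: the text already signals that the conclusion is immediate, and indeed the claimed bound is obtained by plugging the bound on the Pythagoras number into the explicit estimate of \Cref{thm:mmr}. I would split the statement into its two assertions $R(K)\le R_{\cls}(K)$ and $R_{\cls}(K)\ll\iota(K)$, and handle each separately.

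For the first assertion $R(K)\le R_{\cls}(K)$, I would simply argue from the definitions in \Cref{sect:lattices}: a classical universal $\mc O_K$-lattice is in particular an integral, totally positive lattice representing every element of $\mc O_K^+$, so it is a universal lattice in the wider sense. Hence the infimum of ranks over universal lattices is no larger than the infimum taken over the (more restrictive) class of classical universal lattices, which is the required inequality. Both quantities are finite because classical universal lattices exist over every totally real number field by \cite{HKK1978}.

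For the main assertion $R_{\cls}(K)\ll\iota(K)$, I would invoke \Cref{thm:mmr} to get
\[
R_{\cls}(K)\le 2^{n-\sgnrk(K)}\,s(\mc O_K)\cdot\iota(K).
\]
Since $1\le \sgnrk(K)\le n$ (the lower bound because $-1\in\mc O_K^\times$), the prefactor $2^{n-\sgnrk(K)}$ is at most $2^{n-1}$, depending only on $n$. By \Cref{thm:pn}, the Pythagoras number $s(\mc O_K)$ is bounded above by a constant $C(n)$ depending only on $n$. Multiplying the two bounds, the overall implied constant $2^{n-1}C(n)$ depends only on $n$, as required.

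There is essentially no obstacle here: the corollary is a formal packaging of the two theorems. If anything, the deeper input is \Cref{thm:pn}, whose proof genuinely uses the geometry of totally real fields; \Cref{thm:mmr} is the softer of the two, as it simply exhibits an explicit classical universal form built from a set of representatives of indecomposables modulo squares of units. Once both are granted, the corollary follows in a single line.
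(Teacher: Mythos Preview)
Your proposal is correct and matches the paper's approach: the paper simply states that the corollary is immediate from \Cref{thm:mmr} and \Cref{thm:pn}, and your argument is precisely the intended one-line unpacking of that claim.
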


We shall also make use of the following result which bounds the number of short vectors in a $\Z$-lattice. The main part of the result is due to Regev--Stephens-Dawidovitz; for a reference to our formulation, see, e.g., the comments immediately following \cite[Theorem 3.1]{Man}.
\begin{prp}[{\cite[Theorem 1.1]{RDS}}]\label{prp:rs}
	Let $(\Lambda,Q)$ be a totally positive classical quadratic $\Z$-lattice of rank $R$, and $m\in\N$. Let $N_{\Lambda,Q}(m)$ denote the number of vectors $v\in\Lambda$ for which $Q(v) = m$. Then we have $N_{\Lambda,Q}(m) \le C(R,m)$, where
	\[
	C(R,m) \le \begin{cases} \max\cb{480,2R(R-1)} & \text{ if } m=2,\\ 2 \binom{R+2m-2}{2m-1} & \text{ otherwise.}\end{cases}
	\]
\end{prp}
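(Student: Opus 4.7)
The statement naturally splits into two cases, $m = 2$ and $m \geq 3$, which I would handle by quite different methods.

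For $m \geq 3$, the plan is simply to cite the main theorem of Regev--Stephens-Davidowitz, \cite[Theorem 1.1]{RDS}: their bound on the number of vectors of any fixed integer squared-length in a classical positive-definite $\Z$-lattice of rank $R$ is exactly $2\binom{R+2m-2}{2m-1}$. The only work is to verify that our hypotheses (classical, totally positive, rank $R$) match theirs, which they do: the $v \in \Lambda$ with $Q(v) = m$ are precisely the vectors of Euclidean squared norm $m$ in the corresponding positive-definite $\Z$-lattice.

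For $m = 2$, the plan is to identify $\mc R_\Lambda := \cb{v \in \Lambda \,:\, Q(v) = 2}$ as a root system and invoke its classification. Let $B$ denote the associated symmetric bilinear form. Since $(\Lambda, Q)$ is classical, $B(v, w) \in \Z$ for all $v, w \in \Lambda$, and Cauchy--Schwarz gives $|B(v, w)| \leq 2$ for any $v, w \in \mc R_\Lambda$. For each $v \in \mc R_\Lambda$, the reflection $s_v(w) := w - B(v, w) v$ preserves $\Lambda$ (as $B(v, w) \in \Z$) and preserves $Q$, hence permutes $\mc R_\Lambda$. Thus $\mc R_\Lambda$ is a crystallographic root system in the $\R$-span of $\Lambda$, all of whose roots have squared length $2$; it is necessarily simply laced, i.e., a disjoint union of irreducible components of types $A_\ell$, $D_\ell$, $E_6$, $E_7$, $E_8$, of total rank at most $R$. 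A direct optimisation over such type combinations shows that the number of roots is at most $2R(R-1)$ (attained by $D_R$) whenever $R \geq 16$, and at most $480$ otherwise (with equality at $R = 16$ for either $D_{16}$ or $E_8 \oplus E_8$). This yields the bound $\max\cb{480, 2R(R-1)}$.

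The main obstacle is really the $m \geq 3$ case: the polynomial bound $\binom{R+2m-2}{2m-1}$ is significantly stronger than what naive volume or Cauchy--Schwarz arguments would give, and the argument of \cite{RDS} rests on a non-trivial induction on rank together with a projection/reduction step. I would not attempt to reprove that here; the role of our Proposition is simply to repackage their estimate in the form we will need.
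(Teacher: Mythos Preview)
The paper does not prove this proposition at all: it is stated as a citation of \cite[Theorem~1.1]{RDS}, with the remark that the precise formulation (including the sharper $m=2$ bound) can be found in the comments following \cite[Theorem~3.1]{Man}. So there is no ``paper's own proof'' to compare against; the authors treat it as a black box.

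Your proposal is nonetheless sound and in fact supplies more than the paper does. Your plan for $m\ge 3$ (simply invoke \cite{RDS}) matches exactly what the paper intends. For $m=2$ you correctly recognise that the RDS bound $2\binom{R+2}{3}$ is cubic in $R$ and hence does not by itself yield $\max\{480,\,2R(R-1)\}$; your root-system argument is the standard way to get this improvement. The identification of $\{v:Q(v)=2\}$ with a simply-laced root system of rank $\le R$, followed by optimising over ADE components (with the extremum at $D_R$ for $R\ge 16$ and at $E_8\oplus E_8$ or $D_{16}$ for $R=16$), is correct and is precisely the content alluded to in \cite{Man}. So your approach is not different from the paper's --- it is an elaboration of what the paper leaves as a reference.
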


\subsection{Proof of \texorpdfstring{\Cref{thm:usr}}{Theorem~1.1}}\label{subsec:pf 1.1}

Now we are ready to prove \Cref{thm:usr}.

\begin{proof}[Proof of Theorem $\ref{thm:usr}$]
	Let $K$ be a real biquadratic field with $\sgnrk(K) \ge 3$. Let $K_i = \Q(\sqrt{D_i})$, $1\le i \le 3$ be the quadratic subfields of $K$, with $D_i>1$ squarefree. Without loss of generality, we assume $D_1<D_2<D_3$. By \Cref{lem:usc}, there is a unit $\eta \in \mc O_K^\times$ with the same signature as $\sqrt{D_3}$. 
	
	We write 
	\[
	-\ol{\omega_{D_3}} = [u_0;\ol{u_1,\ldots,u_s}]
	\]
	the continued fraction of $-\ol{\omega_{D_3}}$, and define $u := \max\cbm{u_i}{i\in\N}$. We construct as in \eqref{eq:sc} the semiconvergents
	\[
	\beta_{i,l} = s_{i,l} + t_{i,l} \omega_{D_3} \in \mc O_{K_3}, \quad i\ge -1, \quad 0\le l \le u_{i+2},
	\]
	as well as the codifferent elements $\delta_i \in \mc O_{K_3}^\vee$, $i\ge -1$. For $i\ge -1$ odd, we observe that the elements $\beta_{i,l}$ and $\delta_{i+1}$ are totally positive, and we have 
	\[
	\Tr_{K/\Q}\rb{\delta_{i+1}\beta_{i,l}} = 2 \Tr_{K_3/\Q}\rb{\delta_{i+1}\beta_{i,l}} = 2 
	\]
	for $0\le l \le u_{i+2}$. Meanwhile, for $i\ge 0$ even, we observe that elements $\beta_{i,l}$ and $\delta_{i+1}$ have the same signature as $\sqrt{D_3}$. Hence the elements $\eta\beta_{i,l}$ and $\eta^{-1}\delta_{i+1}$ are totally positive, and we have
	\[
	\Tr_{K/\Q}\rb{\rb{\eta^{-1}\delta_{i+1}}\rb{\eta\beta_{i,l}}} = 2 \Tr_{K_3/\Q}\rb{\delta_{i+1}\beta_{i,l}} = 2
	\]
	for $0\le l \le u_{i+2}$. We thus conclude that there exists $\delta \in \mc O_K^{\vee,+}$ such that there are $u+1$ distinct elements $\gamma_1,\ldots,\gamma_{u+1} \in \mc O_K^+$ satisfying $\Tr_{K/\Q}(\delta\gamma_j) = 2$.
	
	Now let $(\Lambda,Q)$ be a classical universal $\mc O_K$-lattice of rank $R \ge R_{\cls}(K)$. By fixing a $\Z$-basis of $\mc O_K$, we can identify $\Lambda$ with a classical $\Z$-lattice of rank $4R$ via an isomorphism $\varphi: \Lambda \xrightarrow{\sim} \Z^{4R}$, on which we equip a quadratic form $q(v) := \Tr_{K/\Q}(\delta Q(\varphi^{-1}(v)))$, which is positive definite. Since $Q$ is universal, for each $1\le i \le u+1$ we can find $w_i \in \Lambda$ such that $Q(w_i) = \gamma_i$. For the corresponding vector $v_i := \varphi(w_i) \in \Z^{4R}$, we have
	\[
	q(\pm v_i) = \Tr_{K/\Q}\rb{\delta Q(w_i)} = \Tr_{K/\Q}(\delta\gamma_i) = 2.
	\]
	So there are at least $2(u+1)$ vectors $v\in\Z^{4R}$ for which we have $q(v) = 2$. By \Cref{prp:rs}, we deduce that
	\[
	u+1 \le \frac 12 C(4R,2) \le \max\cb{240, 4R(4R-1)} \ll R^2.
	\]
	On the other hand, from \eqref{eq:us} we see that
	\[
	u \ge u_s \ge 2 \flr{-\ol{\omega_{D_3}}} > \sqrt{D_3} - 3 \gg \sqrt{D_3}.
	\]
	It follows that $D_3 \ll R^4$. Since the discriminant $\Delta_K$ of $K$ satisfies (see, e.g., \cite{Chatelain1973,Schmal1989})
	\[
	\Delta_K \asymp D_1D_2D_3 \ll D_3^3,
	\]
	we conclude that $R_{\cls}(K) \gg \Delta_K^{1/12}$. By \Cref{cor:rfif} it follows that $\iota(K) \gg \Delta_K^{1/12}$ as well.
	
	Now suppose $(\Lambda,Q)$ be a non-classical universal $\mc O_K$-lattice of rank $R\ge R(K)$. Then $(\Lambda,2Q)$ is a classical $2\mc O_K$-universal lattice (i.e., it represents all elements in $\mc O_K^+ \cap 2\mc O_K$). Applying the aforementioned arguments with $2Q$ in place of $Q$, this yields $2(u+1)$ vectors in $v\in\Z^{4R}$ for which we have $q(v)=4$. By \Cref{prp:rs}, we deduce that
	\[
	u+1 \le \frac 12 C(4R,4) \le \binom{4R+6}{7} \ll R^7.
	\]
	Then the same argument gives $R(K) \gg \Delta_K^{1/42}$.
\end{proof}

\subsection{Finding unit signature ranks for biquadratic fields}

Here we give a brief account on finding the unit signature rank of a real biquadratic field $K$. Everything in this subsection is easily derived from the results of Kubota \cite{Kubota1956}. Nevertheless, we include this exposition for the reader's convenience.

For this we need some notations. Let $F = \Q(\sqrt{D})$ be a real quadratic field, and $\varepsilon \in \mc O_F^\times$ with $\NN_{F/\Q}(\varepsilon) = 1$. Then there exists a unique squarefree integer $d = d_F(\varepsilon)\in\N$ such that $d\varepsilon \in F^2$. In fact, this $d$ is given by the squarefree part of $\Tr_{F/\Q}(\varepsilon+1)$. One easily verifies that $\Tr_{F/\Q}(\varepsilon+1) \varepsilon = (\varepsilon+1)^2$, and it follows that $\sqrt{d\varepsilon} \in F$ is totally positive.

As in \Cref{sect:bqu}, let $K$ be a real biquadratic field, with quadratic subfields $K_1, K_2, K_3$ and corresponding fundamental units $\varepsilon_1,\varepsilon_2,\varepsilon_3$. The first thing to look at is the norm $\NN_{K_i/\Q}(\varepsilon_i)$ of the fundamental units. 

Suppose $N_{K_i/\Q}(\varepsilon_i) = -1$ for every $1\le i \le 3$. From \Cref{thm:k}, we see that if $\varepsilon_1\varepsilon_2\varepsilon_3$ is not a square in $K$, then a system of fundamental units of $\mc O_K^\times$ is given by $\varepsilon_1,\varepsilon_2,\varepsilon_3$, and we have $\sgnrk(K) = 3$. On the other hand, if $\varepsilon_1\varepsilon_2\varepsilon_3$ is a square in $K$, then \cite[Hilfssatz 3]{Kubota1956} shows that $\NN_{K/\Q}(\sqrt{\varepsilon_1\varepsilon_2\varepsilon_3}) = -1$. Thus $-1, \sqrt{\varepsilon_1\varepsilon_2\varepsilon_3}, \varepsilon_2, \varepsilon_3$ have independent signatures, and we have $\sgnrk(K) = 4$. To determine whether $\varepsilon_1\varepsilon_2\varepsilon_3$ is a square in $K$, we use the following criterion.

\begin{prp}[{\cite[Zusatz 1]{Kubota1956}}]
	Assume the settings as above, and suppose $N_{K_i/\Q}(\varepsilon_i) = -1$ for every $1\le i \le 3$. We define the following elements in $K$:
	\begin{align*}
		\xi_0 &= \varepsilon_1\varepsilon_2\varepsilon_3 + \varepsilon_1 + \varepsilon_2 - \varepsilon_3, & \xi_1 &= \varepsilon_1\varepsilon_2\varepsilon_3 + \varepsilon_1 - \varepsilon_2 + \varepsilon_3,\\
		\xi_2 &= \varepsilon_1\varepsilon_2\varepsilon_3 - \varepsilon_1 + \varepsilon_2 + \varepsilon_3, & \xi_3 &= \varepsilon_1\varepsilon_2\varepsilon_3 - \varepsilon_1 - \varepsilon_2 - \varepsilon_3.
	\end{align*}
	Then $\varepsilon_1\varepsilon_2\varepsilon_3 \in K^2$ if and only if $\Tr_{K/\Q}(\xi_i) \in K^2$ for some (or equivalently every) $i \in \cb{0,1,2,3}$. 
\end{prp}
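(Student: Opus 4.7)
The plan is to reduce the equivalence to a single algebraic identity: for each $i \in \{0,1,2,3\}$,
\[
\varepsilon_1\varepsilon_2\varepsilon_3 \cdot \Tr_{K/\Q}(\xi_i) \;=\; \xi_i^2.
\]
Granted this, both implications become immediate. Whenever $\xi_i \neq 0$ (equivalently $\Tr_{K/\Q}(\xi_i) \neq 0$), the identity shows that $\varepsilon_1\varepsilon_2\varepsilon_3$ and $\Tr_{K/\Q}(\xi_i)$ lie in the same class in $K^\times / K^{\times 2}$, so one is a square in $K$ if and only if the other is. This simultaneously yields the ``some $i$'' and ``every $i$'' halves of the criterion.

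To establish the identity, I would compute the action of $\Gal(K/\Q) = \{\id, \sigma_1, \sigma_2, \sigma_3\}$ on $\xi_i$, where $\sigma_j$ generates $\Gal(K/K_j)$. Since $\NN_{K_j/\Q}(\varepsilon_j) = -1$ by hypothesis, for $k \ne j$ we have $\sigma_k(\varepsilon_j) = \ol{\varepsilon_j} = -\varepsilon_j^{-1}$, while $\sigma_j(\varepsilon_j) = \varepsilon_j$. Writing $e_j := \varepsilon_k \varepsilon_\ell$ where $\{j,k,\ell\} = \{1,2,3\}$, a direct calculation (performed by multiplying $\sigma_j(\xi_i)$ through by $e_j$ and collecting terms) yields
\[
\sigma_j(\xi_i) \;=\; c_{i,j}\,\frac{\xi_i}{e_j}, \qquad c_{i,j} \in \{\pm 1\}.
\]
Summing over the four Galois elements and clearing the common denominator $\varepsilon_1\varepsilon_2\varepsilon_3$, the signs $c_{i,j}$ combine to reproduce exactly the defining sign pattern of $\xi_i$ in the numerator, producing $\Tr_{K/\Q}(\xi_i) = \xi_i^2/(\varepsilon_1\varepsilon_2\varepsilon_3)$.

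The main obstacle is modest, amounting to bookkeeping the signs $c_{i,j}$ across the four cases $i = 0,1,2,3$: the intermediate signs genuinely depend on $i$, even though the resulting identity is uniform. A minor edge case is when some $\xi_i = 0$ (so $\Tr_{K/\Q}(\xi_i) = 0$ is vacuously a square). Here one checks that at most one of the $\xi_i$ can vanish---for instance $\xi_0 - \xi_1 = 2(\varepsilon_3 - \varepsilon_2)$ is nonzero because $\varepsilon_2$ and $\varepsilon_3$ lie in distinct quadratic subfields of $K$, and the analogous differences handle the remaining pairs---so there is always an index $j$ with $\xi_j \ne 0$ to which the identity applies, closing the equivalence.
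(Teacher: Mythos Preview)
The paper does not give its own proof of this proposition; it is simply quoted from \cite{Kubota1956}. Your approach via the identity
\[
\varepsilon_1\varepsilon_2\varepsilon_3 \cdot \Tr_{K/\Q}(\xi_i) \;=\; \xi_i^2
\]
is correct and is the natural route (indeed it is essentially Kubota's argument). The Galois computation you sketch checks out: one finds $\sigma_j(\xi_i) = c_{i,j}\,\xi_i/e_j$ with $c_{i,j}$ equal to the sign carried by $\varepsilon_j$ in the definition of $\xi_i$, and summing over $\Gal(K/\Q)$ yields exactly the claimed identity.

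One point in your edge-case discussion deserves tightening. If the ``some $i$'' in the hypothesis happens to be an index with $\xi_i = 0$, then knowing $\Tr_{K/\Q}(\xi_i) = 0 \in K^2$ tells you nothing about $\varepsilon_1\varepsilon_2\varepsilon_3$, and switching to a nonzero $\xi_j$ does not help since you were not given $\Tr_{K/\Q}(\xi_j) \in K^2$. Your observation that at most one $\xi_i$ can vanish is therefore not by itself enough to ``close the equivalence''. The cleanest fix is to note directly that $\xi_0,\xi_1,\xi_2$ never vanish: under the real embedding with each $\varepsilon_k > 1$ one has $\varepsilon_1\varepsilon_2\varepsilon_3 > \varepsilon_k$ for every $k$, so $\xi_0,\xi_1,\xi_2$ are strictly positive there. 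Alternatively, reading the criterion as $\Tr_{K/\Q}(\xi_i) \in K^{\times 2}$ (a nonzero square), which is the natural interpretation for a square-class test, removes the issue entirely.
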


Now suppose $\NN_{K_i/\Q}(\varepsilon_i) = 1$ for some $1\le i \le 3$. To determine the set of fundamental units of $\mc O_K$, we use the following criterion.
\begin{prp}[{\cite[Hilfssatz 11]{Kubota1956}}]
	Assume the settings as above, and suppose $N_{K_i/\Q}(\varepsilon_i) = 1$ for some $1\le i \le 3$. Let $\eta = \varepsilon_1^{m_1}\varepsilon_2^{m_2}\varepsilon_3^{m_3} \in \mc O_K^\times$, where $m_i\in\Z$ is even unless $\NN_{K_i/\Q}(\varepsilon_i) = 1$. Then $\eta \in K^2$ if and only if 
	\begin{equation}\label{eq:kc} 
		d_\eta := d_{K_1}(\varepsilon_1^{m_1}) d_{K_2}(\varepsilon_2^{m_2}) d_{K_3}(\varepsilon_3^{m_3}) \in K^2.
	\end{equation}
\end{prp}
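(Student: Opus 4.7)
The strategy is to reduce the claim to a coset computation in $K^\times/K^{\times 2}$. The hypothesis on the parities of the $m_i$ (even unless $\NN_{K_i/\Q}(\varepsilon_i)=1$) is exactly what is needed so that $\NN_{K_i/\Q}(\varepsilon_i^{m_i}) = 1$ for every $i$; this makes each $d_i := d_{K_i}(\varepsilon_i^{m_i})$ well-defined as the unique squarefree positive integer with $d_i \varepsilon_i^{m_i} \in K_i^{\times 2}$. By definition, we therefore obtain $\alpha_i \in K_i$ with $\alpha_i^2 = d_i \varepsilon_i^{m_i}$ for $i=1,2,3$. (In the degenerate case where $\varepsilon_i^{m_i}$ is already a square in $K_i$, we just take $d_i = 1$.)

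The main step is then to multiply these three identities. Since each $K_i$ sits inside $K$, the product $\alpha_1\alpha_2\alpha_3$ belongs to $K$, and
\[
(\alpha_1\alpha_2\alpha_3)^2 \;=\; \prod_{i=1}^3 d_i \varepsilon_i^{m_i} \;=\; d_\eta \cdot \eta.
\]
Thus $d_\eta \eta \in K^{\times 2}$, so $\eta$ and $d_\eta$ represent the same class in the group $K^\times / K^{\times 2}$. Since $K^{\times 2}$ is a subgroup of $K^\times$, one of $\eta, d_\eta$ lies in $K^{\times 2}$ if and only if the other does, which is precisely the statement of the proposition.

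There is no genuine obstacle in this argument; the proof is essentially a one-line manipulation once the definition of $d_{K_i}$ is unpacked. The only minor points to verify are that the hypotheses on the $m_i$ really do guarantee $\NN_{K_i/\Q}(\varepsilon_i^{m_i}) = 1$ (so that $d_i$ exists and is unique as a squarefree positive integer), and that multiplying squares across the three distinct subfields $K_1, K_2, K_3$ inside $K$ causes no compatibility issue — but this is automatic because each $\alpha_i$ lies in $K$ and multiplication in $K$ is commutative. No further input from Kubota's structure theorem is needed for this step beyond the definitions already recalled.
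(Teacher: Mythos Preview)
Your argument is correct. The paper does not supply its own proof of this proposition; it is quoted verbatim as \cite[Hilfssatz 11]{Kubota1956} and used as a black box in the subsequent discussion of unit signature ranks. Your proof is exactly the natural one: by definition each $d_i\varepsilon_i^{m_i}$ is a square in $K_i\subseteq K$, so multiplying gives $d_\eta\eta\in K^{\times 2}$, whence $\eta$ and $d_\eta$ lie in the same coset of $K^{\times 2}$. Nothing beyond the definition of $d_{K_i}$ is required.
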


To find the unit signature rank of $K$, it remains to determine the signature of $\sqrt{\eta}$ for which $\eta\in K^2$. Write $K_i = \Q(\sqrt{D_i})$, $1\le i \le 3$, with $D_i > 1$ squarefree. Since $\sqrt{d_{K_i}(\varepsilon_i^{m_i})\varepsilon_i^{m_i}}$ is totally positive, it follows that $\sqrt{\eta}$ has the same signature as $\sqrt{d_\eta}$. Finally, \eqref{eq:kc} implies $d_\eta \in c_\eta \Q^2$ for some $c_\eta \in \cb{1,D_1,D_2,D_3}$, this says $\sgn(\sqrt{\eta}) = \sgn(\sqrt{c_\eta})$. 

\subsection{Applications towards Kitaoka conjecture}

The Kitaoka conjecture states that there are only finitely many totally real number fields admit universal ternary forms. Towards this conjecture, Krásenský, Tinková, and Zemková \cite{KTZ2020} showed the following result concerning biquadratic fields.
\begin{thm}[\cite{KTZ2020}]\label{thm:bqk} 
	There are no classical universal ternary forms over real biquadratic fields.
\end{thm}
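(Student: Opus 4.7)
The strategy splits on the unit signature rank $\sgnrk(K)$. When $\sgnrk(K) \geq 3$, I would invoke \Cref{thm:usr} directly: since $R_{\cls}(K) \gg \Delta_K^{1/12}$, the assumption $R_{\cls}(K) \leq 3$ forces $\Delta_K$ into a bounded range. Tracing the explicit constants through the proof of \Cref{thm:usr}---using \Cref{prp:rs} with $R = 3$ to bound $u+1 \leq \tfrac 12 C(12,2) \leq \max\{240, 132\}$, then combining with $u \geq \sqrt{D_3}-3$ and $\Delta_K \asymp D_1 D_2 D_3 \ll D_3^3$---yields an explicit upper bound $N$ on $\Delta_K$. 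The remaining biquadratic fields with $\sgnrk(K) \geq 3$ and $\Delta_K \leq N$ form a finite list, which can be ruled out individually by enumerating indecomposables (or trace-$2$ codifferent multiples) and reapplying \Cref{prp:rs}, or by a direct quadratic-form computation.

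For $\sgnrk(K) \leq 2$, \Cref{thm:usr} gives no information, so I would fall back on the original arguments of Krásenský--Tinková--Zemková \cite{KTZ2020}. Those arguments exploit the tight constraints on $\mc O_K^\times$ imposed by Kubota's classification (\Cref{thm:k}) together with \Cref{lem:un1} (all units of $K$ have positive $K/\Q$-norm when $\sgnrk(K) \leq 3$), and carry out a case-by-case analysis of indecomposables coming from the three quadratic subfields $K_1, K_2, K_3$. The key point is that when few unit signatures are available, one has less freedom to move indecomposables around by unit multiplication, but correspondingly the list of possible unit structures via Kubota shrinks, making a case analysis tractable.

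The main obstacle is the explicit finite check in the $\sgnrk(K) \geq 3$ regime. The constant coming from \Cref{thm:usr} is far from sharp, and $N$ may range into the hundreds of thousands, requiring either a substantial computer search or a refinement of the trace-counting argument. A natural refinement would be to use $\eta \beta_{i,l}$ together with its Galois conjugates to extract more than $u+1$ distinct trace-$2$ elements per period, improving the exponent in the bound and shrinking $N$ considerably. Nevertheless, the principal conceptual contribution is that the bulk of the work has been shifted from an infinite family to a finite verification, which is precisely the simplification of the \cite{KTZ2020} proof alluded to after \Cref{thm:usr}.
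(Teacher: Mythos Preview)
Your proposal matches the paper's treatment essentially exactly: the paper does not give an independent proof of \Cref{thm:bqk} but rather notes that the $\sgnrk(K)\le 2$ case is already easy in \cite{KTZ2020}, and then (in the Corollary following \Cref{thm:bqk}) traces the constants through the proof of \Cref{thm:usr} with $R=3$ to reduce the $\sgnrk(K)\ge 3$ case to a finite check. The only refinement the paper makes over your outline is numerical: instead of the crude bound $u+1\le \max\{240,132\}=240$ from \Cref{prp:rs}, it invokes the sharp value $C(12,2)=264$ (attained by $\Z^{12}$ and $E_8\oplus D_4$), yielding $u\le 131$ and hence $D_i<133^2$, which keeps the residual finite search smaller than your estimate suggests.
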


In their proof, the case where the unit signature rank of $K$ is $\le 2$ is quite easy; however, for the case where $\sgnrk(K)\ge 3$, they relied on a lengthy and technical proof to show that there are only finitely many possible exceptions that one have to check. Our \Cref{thm:usr} gives a vast simplification for this part of the proof. 

\begin{cor}
	Let $K$ be a real biquadratic field with $\sgnrk(K)\ge 3$, and let $K_i = \Q(\sqrt{D_i})$, $1\le i \le 3$ be the quadratic subfields of $K$, with $D_i > 1$ squarefree. If $K$ admits a ternary universal form, then $D_i < 133^2$.
\end{cor}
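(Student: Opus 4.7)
The plan is to specialize the quantitative machinery in the proof of \Cref{thm:usr} to the case $R_{\cls}(K)\le 3$, keeping all absolute constants explicit instead of absorbing them into a $\gg$-symbol.

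By hypothesis there is a classical universal $\mc O_K$-lattice $(\Lambda,Q)$ of rank $3$. I would run the construction of \Cref{thm:usr} verbatim: use \Cref{lem:usc} to pick a unit $\eta\in\mc O_K^\times$ whose signature matches $\sqrt{D_3}$, feed the palindromic continued fraction expansion of $-\ol{\omega_{D_3}}$ and the codifferent formula \eqref{eq:cde} into this setup to obtain a totally positive $\delta\in\mc O_K^{\vee,+}$ together with $u+1$ distinct elements $\gamma_j\in\mc O_K^+$ satisfying $\Tr_{K/\Q}(\delta\gamma_j)=2$, where $u$ denotes the largest partial quotient of $-\ol{\omega_{D_3}}$. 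Fixing a $\Z$-basis of $\mc O_K$ identifies $\Lambda$ with a classical positive definite $\Z$-lattice of rank $12$ equipped with the trace form $q=\Tr_{K/\Q}(\delta Q(\cdot))$; since each $\gamma_j$ is represented by $Q$, there are $2(u+1)$ distinct vectors of $q$-norm $2$ in this $\Z$-lattice.

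Applying \Cref{prp:rs} with $R=12$ and $m=2$, using the $2R(R-1)$ branch of the bound to get $C(12,2)\le 264$, yields $2(u+1)\le 264$, hence $u\le 131$. The continued fraction identity \eqref{eq:us} then forces $u\ge u_s > \sqrt{D_3}-2$ in the worst residue class $D_3\equiv 1\pmod 4$, and an even stronger inequality when $D_3\equiv 2,3\pmod 4$; here the strict inequality uses that $\sqrt{D_3}$ is irrational because $D_3>1$ is squarefree. Combining these gives $\sqrt{D_3} < 133$, hence $D_3<133^2$; since $D_1<D_2<D_3$ by our labeling, the same bound holds for $D_1$ and $D_2$.

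The main obstacle is justifying the sharpened constant $C(12,2)\le 264$ used above: the general statement of \Cref{prp:rs} also permits the safety constant $480$ at small ranks, which would weaken the conclusion to $D_3<241^2$. Dropping the max with $480$ at $R=12$ should follow either by inspecting the Regev--Stephens-Davidowitz proof directly in this range, or by invoking additional symmetry in $q$ coming from the $\mc O_K$-module structure on $\Lambda$ (for example, that norm-$2$ vectors fall into $\mc O_K^{\times,+}$-orbits, or that they can be paired via the Galois action on $\Lambda$). Once this small-rank sharpening is secured, the rest of the argument is the direct calculation above.
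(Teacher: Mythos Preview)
Your approach is exactly the paper's: specialise the proof of \Cref{thm:usr} to rank $3$, pass to the rank-$12$ $\Z$-lattice with the trace form, bound the number of norm-$2$ vectors, and read off the bound on the largest partial quotient. The arithmetic you do at the end ($u\le 131\Rightarrow \sqrt{D_3}<133$) matches the paper's, and your reduction to $D_3$ via the ordering $D_1<D_2<D_3$ is fine (the paper instead runs the argument for each $D_i$ separately, which \Cref{lem:usc} allows).

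The one place you hesitate is also the one place the paper goes beyond the stated \Cref{prp:rs}: as you note, the displayed bound $\max\{480,2R(R-1)\}$ only gives $480$ at $R=12$. The paper does \emph{not} resolve this by mining the Regev--Stephens-Davidowitz argument or by exploiting the $\mc O_K$-module or Galois structure, as you propose. It simply invokes the sharp value $C(12,2)=264$ directly, pointing out that it is attained by $\Z^{12}$ and by $E_8\oplus D_4$. The underlying classical fact is that in a classical positive definite integral $\Z$-lattice the vectors of norm $2$ form a (simply laced) root system, and the maximal number of roots in rank $\le 12$ is $264$ (realised by $D_{12}$, by $E_8\oplus D_4$, and by the norm-$2$ vectors $\pm e_i\pm e_j$ in $\Z^{12}$). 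So your ``main obstacle'' is genuine, but the fix is more elementary than either of your suggestions: just cite the root-system bound.
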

\begin{proof}
	We follow the proof of \Cref{thm:usr}, and optimise the constants along the proof. For $R = 3$, we find $C(12) \le 264$, where the equality is attained for example by the lattices $\Z^{12}$, as well as $E_8 \oplus D_4$. It follows that $u\le 131$, and $\lfloor -\ol{\omega_{D_i}} \rfloor \le 65$. Using the definition of $\omega_{D_i}$, we obtain $\sqrt{D_i} < 133$, or equivalently $D_i < 133^2$, as claimed.
\end{proof}

\section{A family of fields with few indecomposables}

In this section, we let $F = \Q(\sqrt{5})$. Then $\frac{1+\sqrt{5}}2$ is a fundamental unit of $\mc O_F^\times$. For $n\in\N$, we define integers $x_n, y_n$ by the equation
\[
\frac{x_n+y_n\sqrt{5}}2 = \rb{\frac{1+\sqrt{5}}2}^n. 
\]
It is easily verified that $x_n,y_n$ satisfy the recurrence relations
\begin{equation}\label{eq:xy1} 
x_{n+1} = \frac{x_n+5y_n}2, \quad y_{n+1} = \frac{x_n+y_n}2,
\end{equation}
as well as the equations
\begin{equation}\label{eq:xy2} 
x_ny_{n+1} - x_{n+1}y_n = 2(-1)^n, \quad x_nx_{n+1} - 5y_ny_{n+1} = 2(-1)^n,
\end{equation}
and
\begin{equation}\label{eq:xy3}
x_n^2 - 5y_n^2 = 4(-1)^n.
\end{equation}

For $n\ge 0$ be an integer such that
\begin{equation}\label{eq:xy4}
p = p_n := y_{12n+3}^2 - 1, \quad r = r_n := x_{12n+3}^2-1
\end{equation}
are squarefree integers, and let $K = K_n := \Q(\sqrt{5},\sqrt{p})$. The discriminant of the field $K$ is given by \cite[Satz 2.1]{Schmal1989}
\begin{equation}\label{eq:pd} 
\Delta_K = 16(5p_n)^2 = 16(x_{24n+6}-3)^2.
\end{equation}
The subfields of $K$ are given by
\[
K_1 := F = \Q(\sqrt{5}), \quad K_2 := \Q(\sqrt{p}) \quad K_3 := \Q(\sqrt{r}).
\]
For convenience, we shall write $X := x_{12n+3}$, and $Y := y_{12n+3}$. It is easily verified that
\[
\epsilon_1 = \frac{3+\sqrt{5}}2, \quad \epsilon_2 = Y+\sqrt{p}, \quad \epsilon_3 = X+\sqrt{r}
\]
are the generators of $\mc O_{K_1}^{\times,+}$, $\mc O_{K_2}^{\times,+}$, $\mc O_{K_3}^{\times,+}$ respectively. We note from \cite{Williams1970} that an integral basis of $\mc O_K$ is given by
\[
\cb{1,\frac{1+\sqrt{5}}2, \sqrt{p}, \frac{\sqrt{p}+\sqrt{r}}2},
\]
and an integral basis of the codifferent $\mc O_K^\vee$ is given by
\[
\cb{\frac 14-\frac{\sqrt{5}}{20}, \frac{\sqrt{5}}{10}, \frac{\sqrt{p}}{4p}-\frac{\sqrt{r}}{4r}, \frac{\sqrt{r}}{2r}}.
\]
We shall write $[a,b,c,d]$ to denote the element $a+b\sqrt{5}+c\sqrt{p}+d\sqrt{r} \in K$.

\begin{thm}\label{thm:pfi} 
Let $K = K_n = \Q(\sqrt{5}, \sqrt{p_n})$ be a biquadratic field constructed as above. A complete set of indecomposable elements in $\mc O_K^+$ modulo totally positive units is given by
\[
1, \quad \rho := \sqb{\frac{X+Y}2, 0, -\frac 12, \frac 12}, \quad \text{ and}
\]
\[
\gamma_j = \sqb{\frac{y_{2j-1}X+1}2, \frac{x_{2j-1}X-(-1)^j}{10}, (-1)^j\frac{x_{2j-1}}2, (-1)^j\frac{y_{2j-1}}2}, \quad 1\le j \le 6n+1.
\]
So $\iota(K) = 6n+3$. In particular, for this family of biquadratic fields, we have
\[
\frac{\log \Delta_K}{8w} \le \iota(K) \le \frac{\log \Delta_K}{8w} + 1,
\]
where $w := \log (\frac{1+\sqrt{5}}2)$. 
\end{thm}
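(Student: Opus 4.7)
The plan is to describe the sail $\mc S_K$ explicitly and then identify the indecomposables via the tools of Section 2. The first step is to pin down the unit structure: from the fundamental units $\epsilon_1,\epsilon_2,\epsilon_3$ of the three quadratic subfields and the Kubota criteria reviewed in Section 3.5, one determines $\mc O_K^{\times,+}$ and verifies that $\sgnrk(K) = 2$ in this family, so that $\mc O_K^{\times,+}$ has rank $3$. The periodicity of the sail under this action will underlie the count $6n+1$, and the bound $\sgnrk(K)\le 2$ is also consistent with \Cref{thm:usr} (since $\sgnrk\ge 3$ would force polynomial growth of $\iota(K)$ in $\Delta_K$, contradicting the logarithmic claim).

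Second, I would verify that $1, \gamma_1, \ldots, \gamma_{6n+1}$ all lie on the sail. For each $j$, one constructs a codifferent element $\delta_j \in \mc O_K^{\vee,+}$ (expressed in the basis of $\mc O_K^\vee$ given in the text) such that an integer polytope $S_j$ with vertices $\gamma_j, \gamma_{j+1}$, and two further vertices (involving $1$ or its translates by totally positive units) sits in the hyperplane $\Tr_{K/\Q}(\delta_j(-)) = 1$; then \Cref{lem:pis} places $S_j \subseteq \mc S_K$, and \Cref{thm:si} gives indecomposability of every $\gamma_j$ and of $1$. The identities \eqref{eq:xy1}--\eqref{eq:xy3} should exactly close the chain after $6n+1$ steps, with $\gamma_{6n+1}$ mapped into the $\mc O_K^{\times,+}$-orbit of $\gamma_1$ by an appropriate totally positive unit. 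For $\rho$, which lies strictly inside $\mc K_K$, I would verify total positivity directly by evaluating all four real embeddings of $[(X+Y)/2,0,-1/2,1/2]$, and then exhibit a $\delta \in \mc O_K^{\vee,+}$ with $\Tr_{K/\Q}(\delta\rho) < 2$; since $\Tr_{K/\Q}(\delta\alpha) \ge 1$ for every $\alpha \in \mc O_K^+$ by the argument in the proof of \Cref{thm:si}, any decomposition $\rho = \rho_1 + \rho_2$ would force one summand to have trace strictly below $1$, a contradiction. Finally, applying \Cref{thm:ii} face by face---checking $\ID(A_i) = 1$ and exhibiting a unimodular triangulation of each $A_i$---shows that $\rho$ is the unique interior indecomposable orbit, yielding $\iota(K) = 1 + (6n+1) + 1 = 6n+3$.

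For the asymptotic bound, \eqref{eq:pd} gives $\log\Delta_K = 4\log 2 + 2\log(x_{24n+6}-3)$, and since $x_m = \phi^m + (-\phi^{-1})^m$ with $\phi = (1+\sqrt{5})/2$, we obtain
\[
\frac{\log\Delta_K}{8w} = 6n + \tfrac{3}{2} + \frac{\log 2}{2w} + O(\phi^{-24n-6}),
\]
which lies in $[6n+2, 6n+3]$ for every $n \ge 0$ (using $\log 2/(2w) \approx 0.72$), matching $\iota(K) = 6n+3$ up to an additive error of $1$.

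The main obstacle will be the middle step: establishing the face combinatorics of the sail in four dimensions and proving that the $\gamma_j$-polytopes tile a fundamental domain for the $\mc O_K^{\times,+}$-action. Verifying $\ID(A_i)=1$ together with unimodular triangulability of every face---so that \Cref{thm:ii} rules out interior indecomposables beyond $\rho$---will require careful bookkeeping with the recursions \eqref{eq:xy1}--\eqref{eq:xy3}, and the precise identification of $\delta_j$ in the explicit codifferent basis is likely to be the most computationally demanding part of the argument.
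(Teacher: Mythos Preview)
Your overall strategy---construct explicit codifferent elements, use \Cref{lem:pis} to place polytopes on the sail, then invoke \Cref{thm:ii} to rule out further indecomposables---is exactly the paper's approach. However, there is a genuine internal contradiction in your treatment of $\rho$.

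You assert that $\rho$ ``lies strictly inside $\mc K_K$'' and plan to recover it as ``the unique interior indecomposable orbit''. But you also plan to verify $\ID(A_i)=1$ together with a unimodular triangulation for every face $A_i$; by \Cref{thm:ii} this forces $\iota_{\textrm{int}}(A_i)=0$ for all $i$, i.e.\ there are \emph{no} interior indecomposables whatsoever. These two claims cannot both hold. Moreover, your own proposed indecomposability argument for $\rho$---finding $\delta\in\mc O_K^{\vee,+}$ with $\Tr_{K/\Q}(\delta\rho)<2$, hence $=1$---already shows that $\rho$ lies on a supporting hyperplane of $\mc K_K$ and therefore on the sail, not in the interior. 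In the paper the unit translate $\rho\epsilon_2$ appears as a vertex of the large face $A$ (the one cut out by $\delta_A$), so $\rho$ is a sail point on the same footing as $1$ and the $\gamma_j$. The analogy with the simplest cubic fields, where one indecomposable genuinely sits in the interior, is misleading here.

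A second, more practical gap is the face combinatorics. Your sketch envisions a chain of quadrilaterals $S_j$ with vertices $\gamma_j,\gamma_{j+1}$ and two unit translates of $1$. The actual sail has a considerably richer fundamental domain: one $14$-vertex polytope $A$ (integer volume $24$), two families $B_j^\pm$ of $8$-vertex polytopes (volume $9$), and two families $C_j^\pm$ of tetrahedra, with the $\gamma_j$'s of even and odd index distributed across different $B$-faces and $\rho$ appearing only on $A$. Proving completeness requires matching every $2$-face of each polytope with a $2$-face of a unit translate of another---roughly fifty face identifications---and exhibiting explicit unimodular triangulations of $A$ (into $24$ simplices) and of each $B_j^\pm$ (into $9$). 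Your single-chain picture would not close up into a boundaryless surface, so the step ``these polytopes tile a fundamental domain'' would fail as stated. The asymptotic computation at the end is fine.
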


The rest of the section is devoted to the proof of \Cref{thm:pfi}. We fix the embeddings
\begin{align*}
\tau_1\rb{a+b\sqrt{5}+c\sqrt{p}+d\sqrt{r}} &= a+b\sqrt{5}+c\sqrt{p}+d\sqrt{r},\\
\tau_2\rb{a+b\sqrt{5}+c\sqrt{p}+d\sqrt{r}} &= a-b\sqrt{5}+c\sqrt{p}-d\sqrt{r},\\
\tau_3\rb{a+b\sqrt{5}+c\sqrt{p}+d\sqrt{r}} &= a+b\sqrt{5}-c\sqrt{p}-d\sqrt{r},\\
\tau_4\rb{a+b\sqrt{5}+c\sqrt{p}+d\sqrt{r}} &= a-b\sqrt{5}-c\sqrt{p}+d\sqrt{r}.
\end{align*}

We consider the following elements in the codifferent $\mc O_K^\vee$:
\begin{align*}
\delta_A &:= \sqb{\frac 14, -\frac 1{20}, 0, -\frac{1}{5(X+Y)}},\\
\delta_{B,j}^\pm &:= \sqb{\frac 14, -\frac 1{20}, \pm\frac{X-x_{4j-1}}{20p}, \mp\frac{X-y_{4j-1}}{4r}}, & &1\le j \le 3n,\\
\delta_{C,j}^\pm &:= \sqb{\frac 14, \pm\frac 1{20}, -\frac{Y-y_{2j-1}}{4p}, \mp\frac{Y-x_{2j-1}}{4r}}, & &1\le j \le 6n+1.
\end{align*}

\begin{prp}
The elements $1,\rho,\gamma_j, \delta_A, \delta_{B,j}^\pm, \delta_{C,j}^\pm$ constructed above are totally positive.
\end{prp}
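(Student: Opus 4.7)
The plan is to exploit the identity $r = 5p$. Since $12n+3$ is odd, \eqref{eq:xy3} gives $X^2 - 5Y^2 = -4$, so $r = X^2 - 1 = 5(Y^2 - 1) = 5p$. Consequently $\sqrt{r} = \sqrt{5}\cdot\sqrt{p}$ inside $K$, and every element of $K$ admits a unique decomposition $A + B\sqrt{p}$ with $A, B \in F = \Q(\sqrt{5})$. The four Minkowski embeddings $\tau_1,\ldots,\tau_4$ then act by independently flipping the sign of $\sqrt{5}$ (inside $A$ and $B$) and the sign of $\sqrt{p}$, so total positivity of $A + B\sqrt{p}$ is equivalent to the two conditions: (i) $A$ is totally positive in $F$; and (ii) $(A^\sigma)^2 > (B^\sigma)^2 p$ for each of the two real embeddings $\sigma$ of $F$ into $\R$. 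This converts the proposition into a list of inequalities in the quadratic field $F$.

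I would then apply this reduction to each element in turn. For $1$ the conclusion is trivial. For $\rho$, the decomposition gives $A = (X+Y)/2 = y_{12n+4} \in \Q_{>0}$ (automatically totally positive in $F$) and $B = (\sqrt{5}-1)/2$, so (ii) reduces to the clean Fibonacci inequality $y_{12n+4}^2 > \tfrac{1}{4}(\sqrt{5}+1)^2(Y^2-1)$, verifiable from Binet's formula. For $\gamma_j$ we read off $B = (-1)^j\bigl(\tfrac{1+\sqrt{5}}{2}\bigr)^{2j-1}$ and an explicit $A \in F$ whose two conjugates are dominated by the leading term $y_{2j-1}X/2$, giving (i); then (ii) is expanded using the multiplication formulas $x_m x_n + 5 y_m y_n = 2x_{m+n}$ and $x_m y_n + y_m x_n = 2y_{m+n}$ (which come from $\epsilon^m\epsilon^n = \epsilon^{m+n}$ with $\epsilon = (1+\sqrt{5})/2$) together with \eqref{eq:xy3}. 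The codifferent elements $\delta_A, \delta_{B,j}^\pm, \delta_{C,j}^\pm$ are handled in the same spirit: rewrite each as $C + D\sqrt{p}$ with $C, D \in F$ and verify the corresponding $F$-inequalities directly, now taking care that the half-integer denominators $4p$ and $4r$ in the expressions get cancelled by the numerators.

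The hard part is that in all cases except $1$, the inequality in (ii) is asymptotically tight. For $\gamma_j$ with $j$ near the upper endpoint $6n+1$, both $(A^+)^2$ and $(B^+)^2 p$ are of order $\bigl(\tfrac{1+\sqrt{5}}{2}\bigr)^{24n+4}/5$, and their positive difference is pinned down only by the small correction terms ($\pm 1$, $\pm(-1)^j$, etc.) in conjunction with $X^2 - 5Y^2 = -4$. One must therefore carry out the Fibonacci/Lucas expansions carefully and uniformly in both $n$ and $j$; the same phenomenon occurs for $\delta_A$, $\delta_{B,j}^\pm$ (under $1 \le j \le 3n$), and $\delta_{C,j}^\pm$ (under $1 \le j \le 6n+1$), where the specific shape of the coefficients has been designed precisely to make these tight inequalities come out strictly positive.
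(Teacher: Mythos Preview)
Your approach is correct and is in substance the same as the paper's. The paper computes each embedding $\tau_i(\gamma_j)$ directly and then uses the bound $\sqrt{p}<Y$ to reduce everything to inequalities in $F$ of the form $(Y\sqrt{5}-X)(x_{2j-1}+y_{2j-1}\sqrt{5})<1+\sqrt{5}$, etc.; in writing $\tau_i(\gamma_j)$ as (element of $F$) $+$ (element of $F$)$\sqrt{p}$ the paper is already using your identity $\sqrt{r}=\sqrt{5}\sqrt{p}$, just without ever stating $r=5p$ explicitly. Your packaging via criteria (i) and (ii) for $A+B\sqrt{p}$ is a cleaner way to organise the same computation, and your diagnosis that the delicate point is the asymptotic tightness of (ii) (resolved only by the exact $(-1)^j$ corrections and $X^2-5Y^2=-4$) matches exactly what the paper's calculation does for $\gamma_j$ when it bounds $4\phi^{2j-12n-4}$ against $1+\sqrt{5}$ and $\sqrt{5}-1$.
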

\begin{proof}
This is some routine checking, using rational approximations of $\sqrt{p}$ and $\sqrt{r}$ to give lower bounds to the embeddings. Here we show that the elements $\gamma_j$ are totally positive; the proofs for other elements are analogous. The embeddings $\tau_i$ sends $\gamma_j$ to
\begin{align*}
\tau_1(\gamma_j) &= \rb{\frac{y_{2j-1}X+1}2 + \frac{x_{2j-1}X-(-1)^j}{10} \sqrt{5}} + (-1)^j \rb{\frac{x_{2j-1}}2 + \frac{y_{2j-1}}2 \sqrt{5}}\sqrt{p},\\
\tau_2(\gamma_j) &= \rb{\frac{y_{2j-1}X+1}2 - \frac{x_{2j-1}X-(-1)^j}{10} \sqrt{5}} + (-1)^j \rb{\frac{x_{2j-1}}2 - \frac{y_{2j-1}}2 \sqrt{5}}\sqrt{p},\\
\tau_3(\gamma_j) &= \rb{\frac{y_{2j-1}X+1}2 + \frac{x_{2j-1}X-(-1)^j}{10} \sqrt{5}} - (-1)^j \rb{\frac{x_{2j-1}}2 + \frac{y_{2j-1}}2 \sqrt{5}}\sqrt{p},\\
\tau_4(\gamma_j) &= \rb{\frac{y_{2j-1}X+1}2 - \frac{x_{2j-1}X-(-1)^j}{10} \sqrt{5}} - (-1)^j \rb{\frac{x_{2j-1}}2 - \frac{y_{2j-1}}2 \sqrt{5}}\sqrt{p}.
\end{align*}
Since we have $\frac{x_{2j-1}}{y_{2j-1}} < \sqrt{5}$, we check that
\[\begin{aligned}
\tau_1(\gamma_j) &< \tau_3(\gamma_j), & \tau_4(\gamma_j) &< \tau_2(\gamma_j) & &\text{ for $1\le j \le 6n+1$ odd, and}\\
\tau_3(\gamma_j) &< \tau_1(\gamma_j), & \tau_2(\gamma_j) &< \tau_4(\gamma_j) & &\text{ for $2\le j \le 6n$ even.}\\
\end{aligned}\]
So it suffices to show that $\tau_1(\gamma_j), \tau_4(\gamma_j) > 0$ for $j$ odd, and $\tau_2(\gamma_j), \tau_3(\gamma_j) > 0$ for $j$ even. 

Using the bound $\sqrt{p} < Y$, we obtain for $j$ odd
\begin{equation}\label{eq:t1jo1} 
\tau_1(\gamma_j) > \rb{\frac{y_{2j-1}X+1}2 + \frac{x_{2j-1}X+1}{10} \sqrt{5}} - \rb{\frac{x_{2j-1}}2 + \frac{y_{2j-1}}2 \sqrt{5}}Y.
\end{equation}
So it suffices to show the right of \eqref{eq:t1jo1} is positive. By rearranging the terms, we see that this is equivalent to
\begin{equation}\label{eq:t1jo2} 
\rb{Y\sqrt{5}-X}\rb{x_{2j-1}+y_{2j-1}\sqrt{5}} < 1+\sqrt{5}.
\end{equation}
To see that \eqref{eq:t1jo2} is true, we observe that
\[
\rb{Y\sqrt{5}-X}\rb{x_{2j-1}+y_{2j-1}\sqrt{5}} = 4 \rb{\frac{1+\sqrt{5}}2}^{2j-12n-4} \le 4\rb{\frac{1+\sqrt{5}}2}^{-2} < 1+\sqrt{5},
\]
using the assumption $1\le j \le 6n+1$. Similarly, for $j$ odd we have
\begin{equation}\label{eq:t4jo1} 
\tau_4(\gamma_j) > \rb{\frac{y_{2j-1}X+1}2 - \frac{x_{2j-1}X+1}{10} \sqrt{5}} + \rb{\frac{x_{2j-1}}2 - \frac{y_{2j-1}}2 \sqrt{5}}Y.
\end{equation}
So it suffices to show the right of \eqref{eq:t4jo1} is positive. By rearranging the terms, this is equivalent to
\begin{equation}\label{eq:t4jo2} 
\rb{X-Y\sqrt{5}}\rb{x_{2j-1}-y_{2j-1}\sqrt{5}} < \sqrt{5}-1.
\end{equation}
To see that \eqref{eq:t4jo2} is true, we observe that
\[
\rb{X-Y\sqrt{5}}\rb{x_{2j-1}-y_{2j-1}\sqrt{5}} = 4\rb{\frac{1+\sqrt{5}}2}^{-2j-12n-2} \le 4\rb{\frac{1+\sqrt{5}}2}^{-12n-4} < \sqrt{5}-1,
\]
again using the assumption $1\le j \le 6n+1$.

On the other hand, we obtain for $j$ even
\begin{equation}\label{eq:t2je1} 
\tau_2(\gamma_j) > \rb{\frac{y_{2j-1}X+1}2 - \frac{x_{2j-1}X-1}{10}\sqrt{5}} + \rb{\frac{x_{2j-1}}2-\frac{y_{2j-1}}2 \sqrt{5}}Y.
\end{equation}
By rearranging the terms, the right of \eqref{eq:t2je1} being positive is equivalent to
\[
\rb{X-Y\sqrt{5}}\rb{x_{2j-1}-y_{2j-1}\sqrt{5}} < 1+\sqrt{5},
\]
which follows from \eqref{eq:t4jo2}. Similarly, for $j$ even we have
\begin{equation}\label{eq:t3je1} 
\tau_3(\gamma_j) > \rb{\frac{y_{2j-1}X+1}2 + \frac{x_{2j-1}X-1}{10}\sqrt{5}} - \rb{\frac{x_{2j-1}}2+\frac{y_{2j-1}}2 \sqrt{5}}Y.
\end{equation}
By rearranging the terms, the right of \eqref{eq:t3je1} being positive is equivalent to
\begin{equation}\label{eq:t3je2} 
\rb{Y\sqrt{5}-X}\rb{x_{2j-1}+y_{2j-1}\sqrt{5}} < \sqrt{5}-1.
\end{equation}
To see that \eqref{eq:t3je2} is true, we observe that
\[
\rb{Y\sqrt{5}-X}\rb{x_{2j-1}+y_{2j-1}\sqrt{5}} = 4 \rb{\frac{1+\sqrt{5}}2}^{2j-12n-4} \le 4\rb{\frac{1+\sqrt{5}}2}^{-4} \le \sqrt{5}-1
\]
using the assumption $2\le j \le 6n$. Therefore $\gamma_j$ is totally positive for $1\le j \le 6n+1$.
\end{proof}

Next we show that the twisted trace-$1$ hyperplanes associated to $\delta_A, \delta_{B,j}^\pm, \delta_{C,j}^\pm$ contain certain translates of $1,\rho,\gamma_j$. As a convention we shall take $\gamma_0 := \gamma_1 \epsilon_1^{-1}\epsilon_3$.

\begin{prp}\label{prp:t1} 
The hyperplane $\Tr_{K/\Q}(\delta_A(-)) = 1$ contains the elements
\[
1, \epsilon_1, \epsilon_1^{6n+1}\epsilon_2, \epsilon_1^{6n+2}\epsilon_2, \epsilon_1^{6n+1}\epsilon_3, \epsilon_1^{6n+2}\epsilon_3, \epsilon_2\epsilon_3, \epsilon_1\epsilon_2\epsilon_3, \rho\epsilon_2, \rho\epsilon_1\epsilon_2, \gamma_{6n}\epsilon_1, \gamma_{6n}\epsilon_1^2, \gamma_{6n+1}\epsilon_2\epsilon_3, \gamma_{6n+1}\epsilon_1\epsilon_2\epsilon_3;
\]
for $1\le j \le 3n$, the hyperplanes $\Tr_{K/\Q}(\delta_{B,j}^+(-)) = 1$ contains the elements
\[
1, \epsilon_1, \epsilon_1^{2j-1}\epsilon_3, \epsilon_1^{2j}\epsilon_3, \gamma_{2j-2}\epsilon_1, \gamma_{2j-2}\epsilon_1^2, \gamma_{2j}, \gamma_{2j}\epsilon_1;
\]
for $1\le j \le 3n$, the hyperplanes $\Tr_{K/\Q}(\delta_{B,j}^-(-)) = 1$ contains the elements
\[
1, \epsilon_1, \epsilon_1^{2j-1}\epsilon_3^{-1}, \epsilon_1^{2j}\epsilon_3^{-1}, \gamma_{2j-1}, \gamma_{2j-1}\epsilon_1, \gamma_{2j+1}\epsilon_1^{-1}, \gamma_{2j+1};
\]
for $1\le j \le 6n+1$, the hyperplanes $\Tr_{K/\Q}(\delta_{C,j}^+(-)) = 1$ contains the elements
\[
\epsilon_1^{-1}, 1, \epsilon_1^{-j}\epsilon_2, \epsilon_1^{-j+1}\epsilon_2;
\]
for $1\le j \le 6n+1$, the hyperplanes $\Tr_{K/\Q}(\delta_{C,j}^-(-)) = 1$ contains the elements
\[
1, \epsilon_1, \epsilon_1^{j-1}\epsilon_2, \epsilon_1^j\epsilon_2.
\]
\end{prp}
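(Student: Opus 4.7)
The plan is to reduce every claim in the proposition to a direct trace computation in the basis $\{1,\sqrt{5},\sqrt{p},\sqrt{r}\}$. The starting observation is that \eqref{eq:xy3} at index $12n+3$ gives $X^2 - 5Y^2 = -4$, hence $r = X^2-1 = 5Y^2 - 5 = 5p$, so inside $K$ we have $\sqrt{r} = \sqrt{5}\sqrt{p}$. The basis therefore satisfies the multiplication rules
\[
\sqrt{5}\sqrt{p} = \sqrt{r}, \quad \sqrt{5}\sqrt{r} = 5\sqrt{p}, \quad \sqrt{p}\sqrt{r} = p\sqrt{5},
\]
and a short calculation shows that for $\xi = [a_1,b_1,c_1,d_1]$ and $\mu = [a_2,b_2,c_2,d_2]$,
\[
\Tr_{K/\Q}(\xi\mu) = 4\rb{a_1 a_2 + 5 b_1 b_2 + p\, c_1 c_2 + 5p\, d_1 d_2}.
\]

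Next, I would record the coordinates of each product $\mu$ appearing in the proposition in the basis $\{1,\sqrt 5,\sqrt p,\sqrt r\}$. The key building blocks are $\epsilon_1^m = \tfrac{x_m + y_m\sqrt{5}}{2}$, $\epsilon_2 = Y+\sqrt{p}$, $\epsilon_3 = X+\sqrt{r}$, together with the explicit expressions for $\rho$ and $\gamma_j$ given in \Cref{thm:pfi}. Using \eqref{eq:xy1}--\eqref{eq:xy2}, each product $\epsilon_1^k\epsilon_2$, $\epsilon_1^k\epsilon_3$, $\epsilon_2\epsilon_3$, $\gamma_j\epsilon_1^k\epsilon_2^{\pm 1}\epsilon_3^{\pm 1}$ can be written in the basis with coordinates that are polynomials in $X$, $Y$ and in $x_m,y_m$ at appropriate indices.

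Finally, for each assertion I would plug the coordinates of $\delta$ and $\mu$ into the trace formula and verify that the result equals $1$, using only the identities \eqref{eq:xy1}--\eqref{eq:xy3}. For instance, to check $\Tr_{K/\Q}(\delta_{C,j}^-\cdot \epsilon_1^{j-1}\epsilon_2) = 1$, one obtains a polynomial expression in $X,Y,x_{j-1},y_{j-1},x_{2j-1},y_{2j-1}$ and $p$, which collapses after applying a cross-relation of the shape $x_m y_n - x_n y_m = \pm 2(-1)^?$ coming from \eqref{eq:xy2} and then the defining equation $5Y^2 - X^2 = 4$. The computations for $\delta_A$ and $\delta_{B,j}^\pm$ proceed in the same spirit, with the appearance of $\rho = \sqb{\tfrac{X+Y}{2},0,-\tfrac12,\tfrac12}$ contributing a term proportional to $-pc_\mu + 5pd_\mu$ which neatly cancels through the identity $r = 5p$.

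The main obstacle is purely organisational rather than conceptual: with five families of $\delta$'s and up to fourteen entries per list, there are several dozen individual identities to check. The cleanest strategy is to dispatch the five families separately and to factor out repeated sub-computations, such as $\Tr_{K/\Q}(\delta\cdot\epsilon_1^k)$ for two consecutive values of $k$ (whose difference by $\Tr_{K/\Q}(\delta\epsilon_1^k(\epsilon_1-1))$ must vanish), so that the recursive structure of $(x_n),(y_n)$ does most of the arithmetic and each list reduces to only two or three genuinely distinct verifications.
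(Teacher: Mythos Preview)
Your plan is correct and is essentially the paper's approach: the proof there is exactly a case-by-case trace computation, first deriving a specialised formula such as $\Tr_{K/\Q}(\delta_A[a,b,c,d]) = a - b - \tfrac{4dr}{5(X+Y)}$ (which is your bilinear trace form evaluated at $\xi=\delta_A$), and then verifying each of the fourteen elements on the $\delta_A$-list explicitly via \eqref{eq:xy1}--\eqref{eq:xy3}, declaring the remaining families analogous. One small slip to fix before executing: since $\epsilon_1 = \tfrac{3+\sqrt 5}{2} = \bigl(\tfrac{1+\sqrt5}{2}\bigr)^2$, you have $\epsilon_1^m = \tfrac{x_{2m}+y_{2m}\sqrt 5}{2}$, not $\tfrac{x_m+y_m\sqrt5}{2}$; this only shifts indices in your subsequent formulas. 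Your observation $r=5p$ (hence $\sqrt r = \sqrt5\,\sqrt p$) is a tidy simplification that the paper leaves implicit.
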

\begin{proof}
This is also routine checking, using the relations \eqref{eq:xy1}-\eqref{eq:xy3} between $x_i$ and $y_i$. Here we show the statement for $\delta_A$; the proofs for other statements are analogous. For $\alpha = [a,b,c,d]$, we compute 
\begin{equation}\label{eq:tda} 
\Tr_{K/\Q}(\delta_A \alpha) = a-b-\frac{4dr}{5(X+Y)}. 
\end{equation}
Using \eqref{eq:tda}, we compute the trace as follows.
\begin{itemize}[wide]
\item For $\alpha = 1 = [1,0,0,0]$, we get $\Tr_{K/\Q}(\delta_A) = 1$.

\item For $\alpha = \epsilon_1 = [\frac 32,\frac 12,0,0]$, we get $\Tr_{K/\Q}(\delta_A\epsilon_1) = \frac 32 - \frac 12 = 1$.

\item For $\alpha = \epsilon_1^{6n+1}\epsilon_2 = [\frac{x_{12n+2}y_{12n+3}}2, \frac{y_{12n+2}y_{12n+3}}2, \frac{x_{12n+2}}2, \frac{y_{12n+2}}2]$, we get
\begin{multline}\label{eq:tda_s2} 
\Tr_{K/\Q}(\delta_A \epsilon_1^{6n+1}\epsilon_2) = \frac{x_{12n+2}y_{12n+3}}2 -\frac{y_{12n+2}y_{12n+3}}2 -\frac{2y_{12n+2}(x_{12n+3}^2-1)}{5(x_{12n+3}+y_{12n+3})}\\
\scalebox{0.87}{$\displaystyle =\frac{y_{12n+3}(x_{12n+2}x_{12n+3}-5y_{12n+2}y_{12n+3})+(4x_{12n+3}+5y_{12n+3})(x_{12n+2}y_{12n+3}-x_{12n+3}y_{12n+2})+4y_{12n+2}}{10(x_{12n+3}+y_{12n+3})}.$}
\end{multline}
Using \eqref{eq:xy1} and \eqref{eq:xy2}, the numerator in \eqref{eq:tda_s2} equals
\[
8x_{12n+3} + 4y_{12n+2} + 12y_{12n+3} = 10x_{12n+3} + 10y_{12n+3}.
\]
So we get $\Tr_{K/\Q}(\delta_A \epsilon_1^{6n+1}\epsilon_2) = 1$.

\item For $\alpha = \epsilon_1^{6n+2}\epsilon_2 = [\frac{x_{12n+3}y_{12n+4}}2, \frac{y_{12n+3}y_{12n+4}}2, \frac{x_{12n+4}}2, \frac{y_{12n+4}}2]$, we get
\begin{multline}\label{eq:tda_b2} 
\Tr_{K/\Q}(\delta_A \epsilon_1^{6n+2}\epsilon_2) = \frac{x_{12n+3}y_{12n+4}}2 -\frac{y_{12n+3}y_{12n+4}}2 -\frac{2y_{12n+4}(x_{12n+3}^2-1)}{5(x_{12n+3}+y_{12n+3})}\\
\scalebox{0.86}{$\displaystyle =\frac{y_{12n+3}(x_{12n+3}x_{12n+4}-5y_{12n+3}y_{12n+4})+(4x_{12n+3}+5y_{12n+3})(x_{12n+4}y_{12n+3}-x_{12n+3}y_{12n+4})+4y_{12n+4}}{10(x_{12n+3}+y_{12n+3})}.$}
\end{multline}
Using \eqref{eq:xy1} and \eqref{eq:xy2}, the numerator in \eqref{eq:tda_b2} equals
\[
8x_{12n+3} + 8y_{12n+3} + 4y_{12n+4} = 10x_{12n+3} + 10y_{12n+3}.
\]
So we get $\Tr_{K/\Q}(\delta_A \epsilon_1^{6n+2}\epsilon_2) = 1$.

\item For $\alpha = \epsilon_1^{6n+1}\epsilon_3 = [\frac{x_{12n+2}x_{12n+3}}2,\frac{x_{12n+3}y_{12n+2}}2,\frac{5y_{12n+2}}2,\frac{x_{12n+2}}2]$, we get
\begin{multline}\label{eq:tda_s3} 
\Tr_{K/\Q}(\delta_A\epsilon_1^{6n+1}\epsilon_3) = \frac{x_{12n+2}x_{12n+3}}2 - \frac{x_{12n+3}y_{12n+2}}2 - \frac{2x_{12n+2}(x_{12n+3}^2-1)}{5(x_{12n+3}+y_{12n+3})}\\
= \frac{x_{12n+3}(x_{12n+2}x_{12n+3}-5y_{12n+2}y_{12n+3}+5x_{12n+2}y_{12n+3}-5x_{12n+3}y_{12n+2}) + 4x_{12n+2}}{10(x_{12n+3}+y_{12n+3})}.
\end{multline}
Using \eqref{eq:xy1} and \eqref{eq:xy2}, the numerator in \eqref{eq:tda_s3} equals
\[
12x_{12n+3} + 4x_{12n+2} = 10x_{12n+3} + (2x_{12n+3}+4x_{12n+2}) = 10x_{12n+3}+ 10y_{12n+3}.
\]
So we get $\Tr_{K/\Q}(\delta_A \epsilon_1^{6n+1} \epsilon_3) = 1$.

\item For $\alpha = \epsilon_1^{6n+2}\epsilon_3 = [\frac{x_{12n+3}x_{12n+4}}2, \frac{x_{12n+3}y_{12n+4}}2, \frac{5y_{12n+4}}2, \frac{x_{12n+4}}2]$, we get
\begin{multline}\label{eq:tda_b3} 
\Tr_{K/\Q}(\delta_A\epsilon_1^{6n+2}\epsilon_3) = \frac{x_{12n+3}x_{12n+4}}2 -\frac{x_{12n+3}y_{12n+4}}2 - \frac{2x_{12n+4}(x_{12n+3}^2-1)}{5(x_{12n+3}+y_{12n+3})}\\
=\frac{x_{12n+3}(x_{12n+3}x_{12n+4}-5y_{12n+3}y_{12n+4}+5x_{12n+4}y_{12n+3}-5x_{12n+3}y_{12n+4}) + 4x_{12n+4}}{10(x_{12n+3}+y_{12n+3})}.
\end{multline}
Using \eqref{eq:xy1} and \eqref{eq:xy2}, we simplify the numerator in \eqref{eq:tda_b3} to
\[
8x_{12n+3} + 4x_{12n+4} = 10x_{12n+3} + (4x_{12n+4}-2x_{12n+3}) = 10x_{12n+3} + 10y_{12n+3}.
\]
So we get $\Tr_{K/\Q}(\delta_A \epsilon_1^{6n+2}\epsilon_3) = 1$.

\item For $\alpha = \epsilon_2\epsilon_3 = [x_{12n+3}y_{12n+3}, y_{12n+3}^2-1, x_{12n+3}, y_{12n+3}]$, we get
\begin{multline}\label{eq:tda_s23} 
\Tr_{K/\Q}(\delta_A\epsilon_2\epsilon_3) = x_{12n+3}y_{12n+3} - y_{12n+3}^2+1 - \frac{4y_{12n+3}(x_{12n+3}^2-1)}{5(x_{12n+3}+y_{12n+3})}\\
= \frac{y_{12n+3}(x_{12n+3}^2 - 5y_{12n+3}^2) + 5x_{12n+3} + 9y_{12n+3}}{5(x_{12n+3}+y_{12n+3})}.
\end{multline}
By \eqref{eq:xy3}, the numerator in \eqref{eq:tda_s23} equals $5x_{12n+3}+5y_{12n+3}$, thus $\Tr_{K/\Q}(\delta_A\epsilon_2\epsilon_3) = 1$.

\item For $\alpha = \epsilon_1\epsilon_2\epsilon_3 = [x_{12n+4}y_{12n+4}-\frac 32, y_{12n+4}^2-\frac 12, x_{12n+5}, y_{12n+5}]$, we get
\[
\Tr_{K/\Q}(\delta_A\epsilon_1\epsilon_2\epsilon_3) = x_{12n+4}y_{12n+4} -y_{12n+4}^2 -1 -\frac{4y_{12n+5}(x_{12n+3}^2-1)}{5(x_{12n+3}+y_{12n+3})}.
\]
Using \eqref{eq:xy1} and rewrite everything in terms of $x_{12n+3}$ and $y_{12n+3}$, we get
\begin{equation}\label{eq:tda_b23} 
\Tr_{K/\Q}(\delta_A\epsilon_1\epsilon_2\epsilon_3) = \frac{(2x_{12n+3}+y_{12n+3})(5y_{12n+3}^2-x_{12n+3}^2)-3x_{12n+3}+y_{12n+3}}{5(x_{12n+3}+y_{12n+3})}.
\end{equation}
By \eqref{eq:xy3}, the numerator in \eqref{eq:tda_b23} equals $5x_{12n+3}+5y_{12n+3}$, thus $\Tr_{K/\Q}(\delta_A\epsilon_1\epsilon_2\epsilon_3)=1$.

\item For $\alpha = \rho\epsilon_2 = [\frac{x_{12n+3}y_{12n+3}+1}2, \frac{y_{12n+3}^2-1}2, \frac{x_{12n+3}}2, \frac{y_{12n+3}}2]$, we get
\begin{multline}\label{eq:tda_sr2} 
\Tr_{K/\Q}(\delta_A\rho\epsilon_2) = \frac{x_{12n+3}y_{12n+3}+1}2 - \frac{y_{12n+3}^2-1}2 - \frac{2y_{12n+3}(x_{12n+3}^2-1)}{5(x_{12n+3}+y_{12n+3})}\\
=\frac{y_{12n+3}(x_{12n+3}^2-5y_{12n+3}^2)+10x_{12n+3}+14y_{12n+3}}{10(x_{12n+3}+y_{12n+3})}.
\end{multline}
By \eqref{eq:xy3}, the numerator in \eqref{eq:tda_sr2} equals $10x_{12n+3}+10y_{12n+3}$, thus $\Tr_{K/\Q}(\delta_A \rho\epsilon_2) = 1$.

\item For $\alpha = \rho\epsilon_1\epsilon_2 = [\frac{x_{12n+4}y_{12n+4}}2, \frac{y_{12n+4}^2}2, \frac{x_{12n+5}}2, \frac{y_{12n+5}}2]$, we get
\[
\Tr_{K/\Q}(\delta_A\rho\epsilon_1\epsilon_2) = \frac{x_{12n+4}y_{12n+4}}2 - \frac{y_{12n+4}^2}2 - \frac{2y_{12n+5}(x_{12n+3}^2-1)}{5(x_{12n+3}+y_{12n+3})}.
\]
Using \eqref{eq:xy1} and rewrite everything in terms of $x_{12n+3}$ and $y_{12n+3}$, we get
\begin{equation}\label{eq:tda_br2} 
\Tr_{K/\Q}(\delta_A\rho\epsilon_1\epsilon_2) = \frac{(2x_{12n+3}+y_{12n+3})(5y_{12n+3}^2-x_{12n+3}^2)+2x_{12n+3}+6y_{12n+3}}{10(x_{12n+3}+y_{12n+3})}.
\end{equation}
By \eqref{eq:xy3}, the numerator in \eqref{eq:tda_br2} equals $10x_{12n+3}+10y_{12n+3}$, thus $\Tr_{K/\Q}(\delta_A\rho\epsilon_1\epsilon_2) = 1$.

\item For $\alpha = \gamma_{6n}\epsilon_1 = [\frac{x_{12n+2}y_{12n+2}}2+1,\frac{y_{12n+2}^2}2, \frac{x_{12n+1}}2, \frac{y_{12n+1}}2]$, we get
\[
\Tr_{K/\Q}(\delta_A\gamma_{6n}\epsilon_1) = \frac{x_{12n+2}y_{12n+2}}2+1 - \frac{y_{12n+2}^2}2 - \frac{2y_{12n+1}(x_{12n+3}^2-1)}{5(x_{12n+3}+y_{12n+3})}. 
\]
Using \eqref{eq:xy1} and rewrite everything in terms of $x_{12n+1}$ and $y_{12n+1}$, we get
\begin{equation}\label{eq:tda_sg} 
\Tr_{K/\Q}(\delta_A\gamma_{6n}\epsilon_1) = 1+\frac{y_{12n+1}(x_{12n+1}^2-5y_{12n+1}^2)+4y_{12n+1}}{20(x_{12n+1}+2y_{12n+1})}. 
\end{equation}
By \eqref{eq:xy3}, the rightmost term on the right of \eqref{eq:tda_sg} vanishes, thus $\Tr_{K/\Q}(\delta_A\gamma_{6n}\epsilon_1) = 1$.

\item For $\alpha = \gamma_{6n}\epsilon_1^2 = [\frac{x_{12n+3}y_{12n+3}}2+1, \frac{y_{12n+3}^2}2, \frac{x_{12n+3}}2, \frac{y_{12n+3}}2]$, we get
\begin{multline}\label{eq:tda_bg} 
\Tr_{K/\Q}(\delta_A\gamma_{6n}\epsilon_1^2) = \frac{x_{12n+3}y_{12n+3}}2+1 - \frac{y_{12n+3}^2}2 - \frac{2y_{12n+3}(x_{12n+3}^2-1)}{5(x_{12n+3}+y_{12n+3})}\\
= 1 + \frac{y_{12n+3}(x_{12n+3}^2-5y_{12n+3}^2)+4y_{12n+3}}{5(x_{12n+3}+y_{12n+3})}.
\end{multline}
By \eqref{eq:xy3}, the rightmost term on the right of \eqref{eq:tda_bg} vanishes, thus $\Tr_{K/\Q}(\delta_A\gamma_{6n}\epsilon_1^2) = 1$.

\item For $\alpha = \gamma_{6n+1}\epsilon_2\epsilon_3 = [\frac{x_{12n+3}y_{12n+4}+1}2, \frac{y_{12n+3}y_{12n+4}-1}2, \frac{x_{12n+4}}2, \frac{y_{12n+4}}2]$, we get
\begin{multline}\label{eq:tda_sg23} 
\Tr_{K/\Q}(\delta_A\gamma_{6n+1}\epsilon_2\epsilon_3) = \frac{x_{12n+3}y_{12n+4}+1}2 - \frac{y_{12n+3}y_{12n+4}-1}2 - \frac{2y_{12n+4}(x_{12n+3}^2-1)}{5(x_{12n+3}+y_{12n+3})}\\
= 1+\frac{y_{12n+4}(x_{12n+3}^2-5y_{12n+3}^2)+ 4y_{12n+4}}{10(x_{12n+3}+y_{12n+3})}.
\end{multline}
By \eqref{eq:xy3}, the rightmost term on the right of \eqref{eq:tda_sg23} vanishes. Thus $\Tr_{K/\Q}(\delta_A\gamma_{6n+1}\epsilon_2\epsilon_3) = 1$.

\item For $\alpha = \gamma_{6n+1}\epsilon_1\epsilon_2\epsilon_3 = [\frac{x_{12n+5}y_{12n+4}-1}2, \frac{y_{12n+4}y_{12n+5}-1}2, \frac{x_{12n+6}}2, \frac{y_{12n+6}}2]$, we get
\[
\Tr_{K/\Q}(\delta_A\gamma_{6n+1}\epsilon_1\epsilon_2\epsilon_3) = \frac{x_{12n+5}y_{12n+4}-1}2 -\frac{y_{12n+4}y_{12n+5}-1}2 - \frac{2y_{12n+6}(x_{12n+3}^2-1)}{5(x_{12n+3}+y_{12n+3})}.
\]
Using \eqref{eq:xy1} and rewrite everything in terms of $x_{12n+3}$ and $y_{12n+3}$, we get
\begin{equation}\label{eq:tda_bg23} 
\Tr_{K/\Q}(\delta_A\gamma_{6n+1}\epsilon_1\epsilon_2\epsilon_3) = \frac{(3x_{12n+3}+y_{12n+3})(5y_{12n+3}^2-x_{12n+3}^2) + 8x_{12n+3} + 16y_{12n+3}}{20(x_{12n+3}+y_{12n+3})}.
\end{equation}
By \eqref{eq:xy3}, the numerator in \eqref{eq:tda_bg23} equals $20x_{12n+3}+20y_{12n+3}$, thus $\Tr_{K/\Q}(\delta_A\gamma_{6n+1}\epsilon_1\epsilon_2\epsilon_3) = 1$. \qedhere
\end{itemize}
\end{proof}

\begin{prp}\label{prp:pfi} 
The elements $1,\rho,\gamma_j$ constructed above are indecomposable.
\end{prp}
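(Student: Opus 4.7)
I plan to derive indecomposability from Theorem \ref{thm:si} via Lemma \ref{lem:pis}. Indecomposability is invariant under multiplication by totally positive units, since if $\alpha = \alpha_1+\alpha_2$ in $\mc O_K^+$ then $\alpha\epsilon = \alpha_1\epsilon + \alpha_2\epsilon$ in $\mc O_K^+$ for any $\epsilon\in\mc O_K^{\times,+}$. So it suffices to exhibit a totally positive unit translate of each of $1,\rho,\gamma_j$ lying on the sail $\mc S_K$.

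By Proposition \ref{prp:t1}, the elements $1$, $\rho\epsilon_2$, and $\gamma_{6n+1}\epsilon_2\epsilon_3$ all lie in the trace-$1$ hyperplane $\Tr_{K/\Q}(\delta_A(-))=1$; and for each $1\le k\le 3n$, $\gamma_{2k-1}$ lies in $\Tr_{K/\Q}(\delta_{B,k}^-(-))=1$ while $\gamma_{2k}$ lies in $\Tr_{K/\Q}(\delta_{B,k}^+(-))=1$. Thus every $\alpha\in\{1,\rho,\gamma_1,\ldots,\gamma_{6n+1}\}$ has a totally positive unit translate appearing on one of the Proposition \ref{prp:t1} lists, with the relevant $\delta$ already verified to lie in $\mc O_K^{\vee,+}$.

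For each such hyperplane $H$, I would form the convex hull $S_H$ of the listed elements. This is a polytope in $H$ with vertices in $\mc O_K^+$. Provided $\dim S_H = 3$, Lemma \ref{lem:pis} places $S_H$ inside $\mc S_K$, so every listed element lies on the sail, and Theorem \ref{thm:si} yields indecomposability of every listed element---in particular of the chosen translate of each of $1,\rho,\gamma_j$.

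The only substantive step is the dimension check, equivalently, exhibiting four affinely independent elements in each list. Working in the $\Q$-basis $\{1,\sqrt{5},\sqrt{p},\sqrt{r}\}$ (genuinely a basis since $r=5p$ and $K=\Q(\sqrt{5},\sqrt{p})$), this reduces to one $3\times 3$ determinant per hyperplane. For $\Tr_{K/\Q}(\delta_A(-))=1$, the quadruple $\{1,\epsilon_1,\epsilon_2\epsilon_3,\gamma_{6n}\epsilon_1\}$ should suffice---a $2\times 2$ minor in the $\sqrt{p},\sqrt{r}$-coordinates reduces via \eqref{eq:xy2} to $\pm 2\ne 0$---and analogous quadruples, drawn from the eight-element lists, should handle each $\Tr_{K/\Q}(\delta_{B,k}^\pm(-))=1$. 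I do not anticipate any conceptual obstacle here, as the hardest computations have already been carried out in Proposition \ref{prp:t1}.
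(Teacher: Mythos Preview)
Your approach is correct but takes a noticeably longer route than the paper. The paper dispenses with the sail entirely and uses only the elementary observation that if $\alpha\in\mc O_K^+$ and $\delta\in\mc O_K^{\vee,+}$ satisfy $\Tr_{K/\Q}(\delta\alpha)=1$, then $\alpha$ is indecomposable: any decomposition $\alpha=\alpha_1+\alpha_2$ in $\mc O_K^+$ would force two positive integers $\Tr_{K/\Q}(\delta\alpha_1),\Tr_{K/\Q}(\delta\alpha_2)$ to sum to $1$. Since Proposition~\ref{prp:t1} already exhibits, for each of $1,\rho,\gamma_j$ (or a unit translate thereof), such a $\delta$, the proof is one line. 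No polytope, no dimension check, no appeal to Lemma~\ref{lem:pis} or Theorem~\ref{thm:si} is needed.

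Your route---passing through Lemma~\ref{lem:pis} to place a $3$-polytope on the sail and then invoking Theorem~\ref{thm:si}---is logically sound, and the dimension checks you outline would indeed succeed (the paper effectively carries them out later in Lemma~\ref{lem:pc}). But this machinery is overkill here: Theorem~\ref{thm:si} is itself proved via exactly the trace argument above, so you are re-deriving a special case of its proof after first wrapping it in the sail language. The payoff of your approach is nil for this proposition; its real value surfaces only later, in proving that these are \emph{all} the indecomposables (Proposition~\ref{prp:pfs} and the conclusion of Theorem~\ref{thm:pfi}), where the sail structure genuinely does the work.
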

\begin{proof}
This follows from the observation that if $\alpha \in \mc O_K^+$, and there exists $\delta \in \mc O_K^{\vee,+}$ such that $\Tr_{K/\Q}(\delta\alpha) = 1$, then $\alpha$ is indecomposable.
\end{proof}

Now it remains to show that $1,\rho,\gamma_j$ are the \emph{only} indecomposables. For this we utilise the theory of sails. Following \Cref{prp:t1}, we define the following $3$-polytopes:
\begin{align*}
A &:= \left\langle1, \epsilon_1, \epsilon_1^{6n+1}\epsilon_2, \epsilon_1^{6n+2}\epsilon_2, \epsilon_1^{6n+1}\epsilon_3, \epsilon_1^{6n+2}\epsilon_3, \epsilon_2\epsilon_3, \epsilon_1\epsilon_2\epsilon_3,\right.\\
&\hspace{2cm}\left.\rho\epsilon_2, \rho\epsilon_1\epsilon_2, \gamma_{6n}\epsilon_1, \gamma_{6n}\epsilon_1^2, \gamma_{6n+1}\epsilon_2\epsilon_3, \gamma_{6n+1}\epsilon_1\epsilon_2\epsilon_3\right\rangle,\\
B_j^+ &:= \pb{1, \epsilon_1, \epsilon_1^{2j-1}\epsilon_3, \epsilon_1^{2j}\epsilon_3, \gamma_{2j-2}\epsilon_1, \gamma_{2j-2}\epsilon_1^2, \gamma_{2j}, \gamma_{2j}\epsilon_1}, & &1\le j \le 3n,\\
B_j^- &:= \pb{1, \epsilon_1, \epsilon_1^{2j-1}\epsilon_3^{-1}, \epsilon_1^{2j}\epsilon_3^{-1}, \gamma_{2j-1}, \gamma_{2j-1}\epsilon_1, \gamma_{2j+1}\epsilon_1^{-1}, \gamma_{2j+1}}, & &1\le j \le 3n,\\
C_j^+ &:= \pb{\epsilon_1^{-1}, 1, \epsilon_1^{-j}\epsilon_2, \epsilon_1^{-j+1}\epsilon_2}, & &1\le j \le 6n+1,\\
C_j^- &:= \pb{1, \epsilon_1, \epsilon_1^{j-1}\epsilon_2, \epsilon_1^j\epsilon_2}, & &1\le j \le 6n+1.
\end{align*}

\begin{lem}\label{lem:pc} 
For the polytopes $A, B_j^\pm, C_j^\pm$, we have
\[
\IV(A) = 24, \quad \IV(B_j^\pm) = 9, \quad \IV(C_j^\pm) = 1.
\]
Moreover, these polytopes all admit a unimodular triangulation.
\end{lem}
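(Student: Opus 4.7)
The plan is to verify each assertion by producing an explicit unimodular triangulation of the polytope. By \Cref{prp:t1}, each of $A$, $B_j^\pm$, $C_j^\pm$ lies in a hyperplane of the form $\Tr_{K/\Q}(\delta(-)) = 1$ with $\delta \in \mc O_K^{\vee,+}$; hence \Cref{cor:ctc} forces $\ID = 1$ in every case, and the integer volume becomes additive over any integer triangulation, equal to the number of tetrahedra in a unimodular one. So the task reduces to exhibiting $1$, $9$, $24$ unimodular tetrahedra for $C_j^\pm$, $B_j^\pm$, $A$ respectively.

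First I would treat the $C_j^\pm$, which are already $3$-simplices (four vertices in a $3$-plane). Multiplying by a suitable power of $\epsilon_1$, whose action on $\mc O_K \simeq \Z^4$ has determinant $\pm 1$ and hence preserves integer volume, normalises the simplex so that one vertex becomes $1$. The integer volume of the resulting tetrahedron equals $|\det(v_0, v_1, v_2, v_3)|$, computed as a $4\times 4$ determinant in the integral basis $\{1, \frac{1+\sqrt{5}}{2}, \sqrt{p}, \frac{\sqrt{p}+\sqrt{r}}{2}\}$ given at the beginning of Section 5. Direct expansion using the identities \eqref{eq:xy1}--\eqref{eq:xy3} should yield $\pm 1$, so the simplex is already unimodular and trivially admits a unimodular triangulation.

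For $B_j^\pm$ and $A$, the crucial structural observation is that the vertex sets decompose into pairs of the form $(v, v\epsilon_1)$: four such pairs for each $B_j^\pm$ and seven for $A$. Although $v \mapsto v\epsilon_1$ does not preserve the ambient hyperplane globally, it identifies vertices in pairs, endowing each polytope with a natural two-layer structure. I would exploit this to write down an explicit list of $9$ tetrahedra for each $B_j^\pm$ and $24$ for $A$ whose union is the polytope and whose pairwise intersections are faces. Unimodularity of each tetrahedron is then a $4\times 4$ integer determinant check, which again collapses via \eqref{eq:xy1}--\eqref{eq:xy3} and the explicit formulas for $\rho$, $\gamma_j$, $\epsilon_i$ from Section 5.

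The main obstacle is the bookkeeping of constructing these explicit triangulations with exactly the prescribed cardinality and verifying $\pm 1$ for each tetrahedral determinant. The $B_j^\pm$ verifications should be essentially uniform in $j$ once the polytope is normalised by an appropriate power of $\epsilon_1$, since after such a normalisation its combinatorial type does not depend on $j$. The fourteen-vertex polytope $A$, on the other hand, admits several possible triangulations, and one must be chosen so that each of the $24$ determinants can be evaluated cleanly; a natural candidate is to first triangulate the ``bottom layer'' built from the vertices without an extra factor of $\epsilon_1$, lift each of those $2$-simplices to three $3$-simplices using the paired ``top layer'' via the standard prism-triangulation pattern, and adjust for the twist introduced by $v \mapsto v\epsilon_1$.
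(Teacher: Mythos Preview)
Your overall strategy is sound and would eventually succeed, but it differs from the paper's proof in one important respect that makes the latter much cleaner.

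The paper does not compute $4\times 4$ determinants in the integral basis of $\mc O_K$ at all. Instead, for each of $A$ and $B_j^\pm$ it picks three edge vectors from the vertex $1$ to three neighbouring vertices, checks once that this reference tetrahedron is unimodular, and then expresses \emph{every} remaining vertex in these new coordinates. The result is a list of points in $\Z^3$ with small fixed entries (e.g.\ for $B_j^+$ the eight vertices become $(0,0,0),(1,0,0),(0,1,0),(1,4,-5),(0,0,1),(1,1,-1),(0,1,-1),(1,3,-4)$), \emph{completely independent of $n$ and $j$}. After this reduction, producing a unimodular triangulation and reading off the integer volume is a finite combinatorial check with no further appeal to the identities \eqref{eq:xy1}--\eqref{eq:xy3}. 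For $B_j^-$ the same change of basis even maps to the \emph{identical} $\Z^3$-polytope as $B_j^+$, so nothing new needs to be done. Your proposal, by contrast, would have you evaluate many $4\times 4$ determinants involving $x_{2j-1},y_{2j-1},X,Y$ and repeatedly simplify via the recursions; this works but is substantially more bookkeeping than the paper's single change-of-basis step.

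Your prism heuristic deserves a caveat. The map $v\mapsto v\epsilon_1$ is multiplicative, not a translation within the trace-$1$ hyperplane, so the pairs $(v,v\epsilon_1)$ do not have parallel connecting edges: in the paper's $\Z^3$-coordinates for $B_j^+$ the four ``vertical'' edges are $(1,0,0),(1,3,-5),(1,1,-2),(1,2,-3)$. Thus the polytopes are not prisms, and the standard ``triangulate the base, lift each triangle to three tetrahedra'' pattern does not apply directly; the paper's explicit $9$- and $24$-simplex lists do not visibly follow a prism scheme. Your acknowledgement that one must ``adjust for the twist'' is correct, but at that point you are essentially back to constructing an ad hoc triangulation, which is exactly what the paper does after its coordinate change.
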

\begin{proof}
It is more convenient to use alternative bases to describe these polytopes. For the polytope $A$, we take $1$ as the origin, and pick basis vectors $e_1 = \epsilon_1-1$, $e_2 = \epsilon_1^{6n+1}\epsilon_2-1$, and $e_3 = \epsilon_1^{6n+2}\epsilon_2-1$. We check that the integer simplex $\pb{1, \epsilon_1, \epsilon_1^{6n+1}\epsilon_2, \epsilon_1^{6n+2}\epsilon_2}$ corresponding to this basis has integral volume $1$. With respect to this basis, the lattice points on $A$ are taken to the following coordinates:
\begin{align*}
1&\mapsto(0,0,0)=:\texttt{A1}, & \epsilon_1&\mapsto(1,0,0)=:\texttt{A2},\\
\epsilon_1^{6n+1}\epsilon_2&\mapsto(0,1,0)=:\texttt{A3}, & \epsilon_1^{6n+2}\epsilon_2&\mapsto(0,0,1)=:\texttt{A4},\\
\epsilon_1^{6n+1}\epsilon_3&\mapsto(-2,-3,2) =: \texttt{A5}, & \epsilon_1^{6n+2}\epsilon_3&\mapsto(-2,-2,3)=:\texttt{A6},\\
\epsilon_2\epsilon_3&\mapsto(-2,-2,2)=:\texttt{A7}, & \epsilon_1\epsilon_2\epsilon_3 &\mapsto(-3,-2,4)=:\texttt{A8},\\
\rho\epsilon_2&\mapsto(-1,-1,1)=:\texttt{A9}, & \rho\epsilon_1\epsilon_2&\mapsto(-1,-1,2)=:\texttt{A10},\\
\gamma_{6n}\epsilon_1&\mapsto(-1,-2,1)=:\texttt{A11}, & \gamma_{6n}\epsilon_1^2&\mapsto(0,-1,1)=:\texttt{A12},\\
\gamma_{6n+1}\epsilon_2\epsilon_3&\mapsto(-1,0,1)=:\texttt{A13}, & \gamma_{6n+1}\epsilon_1\epsilon_2\epsilon_3&\mapsto(-2,-1,3)=:\texttt{A14}.
\end{align*}
With the new coordinates it is straightforward to verify that $\IV(A) = 24$. Now we give one (out of many possible) unimodular triangulation of $A$ into $24$ integer simplices with unit integer volume:
\begin{align*}
&\pb{\texttt{A1}, \texttt{A2}, \texttt{A3}, \texttt{A12}}, & &\pb{\texttt{A1}, \texttt{A2}, \texttt{A11}, \texttt{A12}}, & &\pb{\texttt{A1}, \texttt{A3}, \texttt{A8}, \texttt{A9}}, & &\pb{\texttt{A1}, \texttt{A3}, \texttt{A8}, \texttt{A14}},\\
&\pb{\texttt{A1}, \texttt{A3}, \texttt{A10}, \texttt{A12}}, & &\pb{\texttt{A1}, \texttt{A3}, \texttt{A10}, \texttt{A14}}, & &\pb{\texttt{A1}, \texttt{A9}, \texttt{A10}, \texttt{A12}}, & &\pb{\texttt{A1}, \texttt{A9}, \texttt{A10}, \texttt{A14}},\\
&\pb{\texttt{A1}, \texttt{A9}, \texttt{A11}, \texttt{A12}}, & &\pb{\texttt{A2}, \texttt{A3}, \texttt{A4}, \texttt{A10}}, & &\pb{\texttt{A2}, \texttt{A3}, \texttt{A10}, \texttt{A12}}, & &\pb{\texttt{A3}, \texttt{A4}, \texttt{A10}, \texttt{A14}},\\
&\pb{\texttt{A3}, \texttt{A7}, \texttt{A8}, \texttt{A9}}, & &\pb{\texttt{A3}, \texttt{A7}, \texttt{A8}, \texttt{A13}}, & &\pb{\texttt{A5}, \texttt{A6}, \texttt{A8}, \texttt{A11}}, & &\pb{\texttt{A5}, \texttt{A6}, \texttt{A11}, \texttt{A12}},\\
&\pb{\texttt{A5}, \texttt{A7}, \texttt{A8}, \texttt{A11}}, & &\pb{\texttt{A6}, \texttt{A8}, \texttt{A11}, \texttt{A12}}, & &\pb{\texttt{A7}, \texttt{A8}, \texttt{A9}, \texttt{A10}}, & &\pb{\texttt{A7}, \texttt{A8}, \texttt{A10}, \texttt{A11}},\\
&\pb{\texttt{A7}, \texttt{A9}, \texttt{A10}, \texttt{A11}}, & &\pb{\texttt{A8}, \texttt{A9}, \texttt{A10}, \texttt{A14}}, & &\pb{\texttt{A8}, \texttt{A10}, \texttt{A11}, \texttt{A12}}, & &\pb{\texttt{A9}, \texttt{A10}, \texttt{A11}, \texttt{A12}}.
\end{align*}

For the polytopes $B_j^+$, we take $1$ as the origin, and pick basis vectors $e_1 = \epsilon_1-1$, $e_2 = \epsilon_1^{2j-1}\epsilon_3-1$, and $e_3 = \gamma_{2j-2}\epsilon_1-1$. Again, we check that the simplex $\pb{1, \epsilon_1, \epsilon_1^{2j-1}\epsilon_3, \gamma_{2j-2}\epsilon_1}$ corresponding to this basis has integral volume $1$. With respect to this basis, the lattice points on $B_j^+$ are taken to the following coordinates:
\begin{align*}
1&\mapsto(0,0,0)=:\texttt{B1}, & \epsilon_1&\mapsto(1,0,0)=:\texttt{B2},\\
\epsilon_1^{2j-1}\epsilon_3&\mapsto(0,1,0)=:\texttt{B3}, & \epsilon_1^{2j}\epsilon_3&\mapsto(1,4,-5)=:\texttt{B4},\\
\gamma_{2j-2}\epsilon_1&\mapsto(0,0,1)=:\texttt{B5}, & \gamma_{2j-2}\epsilon_1^2&\mapsto(1,1,-1)=:\texttt{B6},\\
\gamma_{2j}&\mapsto(0,1,-1)=:\texttt{B7}, & \gamma_{2j}\epsilon_1&\mapsto(1,3,-4)=:\texttt{B8}.
\end{align*}
With the new coordinates it is straightforward to verify that $\IV(B_j^+) = 9$. Now we give a unimodular triangulation of $B$ into $9$ integer simplices with unit integer volume:
\begin{align*}
&\pb{\texttt{B1}, \texttt{B2}, \texttt{B5}, \texttt{B6}}, & &\pb{\texttt{B1}, \texttt{B2}, \texttt{B6}, \texttt{B8}}, & &\pb{\texttt{B1}, \texttt{B5}, \texttt{B6}, \texttt{B7}},\\
&\pb{\texttt{B1}, \texttt{B6}, \texttt{B7}, \texttt{B8}}, & &\pb{\texttt{B3}, \texttt{B4}, \texttt{B5}, \texttt{B6}}, & &\pb{\texttt{B3}, \texttt{B4}, \texttt{B5}, \texttt{B7}},\\
&\pb{\texttt{B4}, \texttt{B5}, \texttt{B6}, \texttt{B8}}, & &\pb{\texttt{B4}, \texttt{B5}, \texttt{B7}, \texttt{B8}}, & &\pb{\texttt{B5}, \texttt{B6}, \texttt{B7}, \texttt{B8}}.
\end{align*}

For the polytopes $B_j^-$, we take $1$ as the origin, and pick basis vectors $e_1 = \epsilon_1-1$, $e_2 = \epsilon_1^{2j-1}\epsilon_3^{-1}-1$, and $e_3 = \gamma_{2j-1}-1$. Again, we check that the simplex $\pb{1, \epsilon_1, \epsilon_1^{2j-1}\epsilon_3^{-1}, \gamma_{2j-1}}$ corresponding to this basis has integral volume $1$. With respect to this basis, the lattice points on $B_j^-$ are taken to the following coordinates:
\begin{align*}
1&\mapsto(0,0,0)=\texttt{B1}, & \epsilon_1&\mapsto(1,0,0)=\texttt{B2},\\
\epsilon_1^{2j-1}\epsilon_3^{-1}&\mapsto(0,1,0)=\texttt{B3}, & \epsilon_1^{2j}\epsilon_3^{-1}&\mapsto(1,4,-5)=\texttt{B4},\\
\gamma_{2j-1}&\mapsto(0,0,1)=\texttt{B5}, & \gamma_{2j-1}\epsilon_1&\mapsto(1,1,-1)=\texttt{B6},\\
\gamma_{2j}\epsilon_1^{-1}&\mapsto(0,1,-1)=\texttt{B7}, & \gamma_{2j}&\mapsto(1,3,-4)=\texttt{B8}.
\end{align*}
We observe that $B_j^+$ and $B_j^-$ are taken to the same polytope. So we also have $\IV(B_j^-) = 9$ and a unimodular triangulation of $B_j^-$. 

Finally, for simplices $C_j^+$ and $C_j^-$, we only have to verify that they have integral volume $1$, which is straightforward.
\end{proof}

\begin{prp}\label{prp:pfs} 
The sail $\mc S_K$ of the biquadratic field $K = K_n$ is given by
\[
\mc S_K = \bigcup_{m\in\Z^3} \epsilon_1^{m_1}\epsilon_2^{m_2}\epsilon_3^{m_3} \rb{ A \cup \bigcup_{j=1}^{3n} B_j^+ \cup \bigcup_{j=1}^{3n} B_j^- \cup \bigcup_{j=2}^{6n+1} C_j^+ \cup \bigcup_{j=1}^{6n+1} C_j^-}.
\]
\end{prp}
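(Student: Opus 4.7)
The plan is to prove the two set-theoretic inclusions separately, writing $U$ for the right-hand side of the claimed equality.

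For $U \subseteq \mc S_K$, I would apply Lemma \ref{lem:pis} to each polytope $A$, $B_j^\pm$, $C_j^\pm$ in turn. The three hypotheses of that lemma are: (i) $3$-dimensionality, which is immediate from the unimodular triangulations exhibited in Lemma \ref{lem:pc}; (ii) vertices in $\mc O_K^+$, which follows from the preceding proposition's total positivity of $1$, $\rho$, $\gamma_j$ combined with the elementary total positivity of $\epsilon_1, \epsilon_2, \epsilon_3$; and (iii) containment in a hyperplane $\Tr_{K/\Q}(\delta(-))=1$ for some $\delta \in \mc O_K^{\vee,+}$, which is the content of Proposition \ref{prp:t1} once one verifies that $\delta_A, \delta_{B,j}^\pm, \delta_{C,j}^\pm$ are themselves totally positive (a routine check in the style of the preceding proposition). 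Invariance of $\mc S_K$ under multiplication by totally positive units then extends the containment to all $\Gamma$-translates, where $\Gamma := \pb{\epsilon_1, \epsilon_2, \epsilon_3}$.

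For $\mc S_K \subseteq U$, I would argue via an open/closed/connected dichotomy. The sail $\mc S_K$ is a connected $3$-dimensional polyhedral surface (being the inner boundary of the convex Klein polyhedron in the open orthant $\R_{>0}^4$). The set $U$ is closed in $\mc S_K$ as a locally finite union of closed polytopes, so it suffices to show that $U$ is also open in $\mc S_K$; connectedness then forces $U = \mc S_K$. Openness is a local statement, trivial at interior points of any polytope of $U$, and at boundary points it reduces (by a standard induction on codimension in a polyhedral $3$-complex) to the single combinatorial claim that every $2$-dimensional facet of every polytope in the list $A$, $B_j^\pm$, $C_j^\pm$ is shared with exactly one other polytope in $U$ (possibly a $\Gamma$-translate).

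The hard part will be this facet-matching enumeration. Using the coordinate tables in the proof of Lemma \ref{lem:pc}, each polytope's $2$-facets can be read off and each facet matched to a facet of an adjacent polytope; individual matchings are mechanical, but the bookkeeping is substantial, especially for the $14$-vertex polytope $A$ with its $24$-simplex triangulation. The expected adjacency pattern is: successive $B_j^\pm$ glue to each other across shared $\gamma$-edges as $j$ varies; the $C_j^\pm$ stack along their $\epsilon_1^k\epsilon_2$ edges; and $A$, together with a handful of $\Gamma$-translates $\epsilon_i A$, closes up the complex at the extremal indices $j=3n$ and $j=6n+1$. Once every $2$-facet is accounted for in this way, openness of $U$ in $\mc S_K$ follows and the proposition is proved.
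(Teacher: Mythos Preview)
The proposal is correct and takes essentially the same approach as the paper: both establish $U \subseteq \mc S_K$ via Lemma~\ref{lem:pis} and Proposition~\ref{prp:t1}, and both reduce the reverse inclusion to a facet-matching enumeration (which the paper carries out in full, listing every $2$-face of $A$, $B_j^\pm$, $C_j^\pm$ together with its matching partner under a $\Gamma$-translate). Your open/closed/connected framing is simply a more explicit topological justification of what the paper phrases as ``showing that the union of these polytopes has no boundary.''
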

\begin{proof}
By \Cref{prp:t1}, each of the polytopes $A, B_j^{\pm}, C_j^{\pm}$ are contained in the some hyperplane $\Tr_{K/\Q}(\delta(-)) = 1$ with $\delta \in \mc O_K^{\vee,+}$. It follows from \Cref{lem:pis} that these polytopes are contained in the sail $\mc S_K$. It remains to check that there are no missing pieces. This can be proved by showing that the union of these polytopes have no boundary.

To do this, we match the faces of a polytope to another. With the help of the characterisation of the polytopes in the proof of \Cref{lem:pc}, we list the faces of the polytopes by lattice point elements they contain. We shall write the matching face's label in square brackets.

Here are the faces of the polytope $A$ (and their matching faces):
\begin{enumerate}[label=($A$-\Roman*)]
\item $\pbs{1, \epsilon_1, \epsilon_1^{6n+1}\epsilon_2}$, \quad [\textrm{($C_{6n+1}^-$-II)}]
\item $\pbs{\epsilon_1, \epsilon_1^{6n+1}\epsilon_2, \epsilon_1^{6n+2}\epsilon_2}$, \quad [$\epsilon_1 \textrm{($C_{6n+1}^-$-III)}$]
\item $\pbs{\epsilon_1^{6n+1}\epsilon_3, \epsilon_2\epsilon_3, \epsilon_1\epsilon_2\epsilon_3}$, \quad [$\epsilon_1^{6n+1}\epsilon_3 \textrm{($C_{6n+1}^+$-IV)}$]
\item $\pbs{\epsilon_1^{6n+1}\epsilon_3, \epsilon_1^{6n+2}\epsilon_3, \epsilon_1\epsilon_2\epsilon_3}$, \quad [$\epsilon_1^{6n+2}\epsilon_3 \textrm{($C_{6n+1}^+$-I)}$]
\item $\pbs{1, \epsilon_1, \gamma_{6n}\epsilon_1}$, \quad [\textrm{($B_{3n}^+$-II)}]
\item $\pbs{\epsilon_1^{6n+1}\epsilon_3, \epsilon_1^{6n+2}\epsilon_3, \gamma_{6n}\epsilon_1^2}$, \quad [$\epsilon_1^2 \textrm{($B_{3n}^+$-IV)}$]
\item $\pbs{\epsilon_2\epsilon_3, \epsilon_1\epsilon_2\epsilon_3, \gamma_{6n+1}\epsilon_2\epsilon_3}$, \quad [$\epsilon_2\epsilon_3 \textrm{($B_{3n}^-$-II)}$]
\item $\pbs{\epsilon_1^{6n+1}\epsilon_2, \epsilon_1^{6n+2}\epsilon_2, \gamma_{6n+1}\epsilon_1\epsilon_2\epsilon_3}$, \quad [$\epsilon_1^2\epsilon_2\epsilon_3 \textrm{($B_{3n}^-$-IV)}$]
\item $\pbs{\epsilon_1, \epsilon_1^{6n+1}\epsilon_3, \gamma_{6n}\epsilon_1, \gamma_{6n}\epsilon_1^2}$, \quad [$\epsilon_1 \textrm{($B_{3n}^+$-V)}$]
\item $\pbs{\epsilon_1^{6n+1}\epsilon_2, \epsilon_1\epsilon_2\epsilon_3, \gamma_{6n+1}\epsilon_2\epsilon_3, \gamma_{6n+1}\epsilon_1\epsilon_2\epsilon_3}$, \quad [$\epsilon_1\epsilon_2\epsilon_3 \textrm{($B_{3n}^-$-V)}$]
\item $\pbs{1, \epsilon_1^{6n+1}\epsilon_2, \epsilon_1^{6n+1}\epsilon_3, \epsilon_2\epsilon_3, \rho\epsilon_2, \gamma_{6n}\epsilon_1, \gamma_{6n+1}\epsilon_2\epsilon_3}$, \quad [$\epsilon_1^{-1} \textrm{($A$-XII)}$]
\item $\pbs{\epsilon_1, \epsilon_1^{6n+2}\epsilon_2, \epsilon_1^{6n+2}\epsilon_3, \epsilon_1\epsilon_2\epsilon_3, \rho\epsilon_1\epsilon_2, \gamma_{6n}\epsilon_1^2, \gamma_{6n+1}\epsilon_1\epsilon_2\epsilon_3}$.  \quad [$\epsilon_1 \textrm{($A$-XI)}$]
\end{enumerate}
Here are the faces of the polytopes $B_j^+$ (and their matching faces):
\begin{enumerate}[label=($B_j^+$-\Roman*)]
\item $\pbs{1, \epsilon_1, \gamma_{2j-2}\epsilon_1}$, \quad [\textrm{($B_{j-1}^+$-II)} if $j\ge 2$, $\epsilon_1^{-1}\epsilon_3 \textrm{($B_1^-$-III)}$ if $j=1$]
\item $\pbs{1, \epsilon_1, \gamma_{2j}\epsilon_1}$, \quad [\textrm{($B_{j+1}^+$-I)} if $j\le 3n-1$, \textrm{($A$-V)} if $j=3n$]
\item $\pbs{\epsilon_1^{2j-1}\epsilon_3, \epsilon_1^{2j}\epsilon_3, \gamma_{2j-2}\epsilon_1^2}$, \quad [$\epsilon_1^2 \textrm{($B_{j-1}^+$-IV)}$ if $j\ge 2$, $\epsilon_1\epsilon_3 \textrm{($B_1^-$-I)}$ if $j=1$]
\item $\pbs{\epsilon_1^{2j-1}\epsilon_3, \epsilon_1^{2j}\epsilon_3, \gamma_{2j}}$, \quad [$\epsilon_1^{-2} \textrm{($B_{j+1}^+$-III)}$ if $j\le 3n-1$, $\epsilon_1^{-2} \textrm{($A$-VI)}$ if $j=3n$]
\item $\pbs{1, \epsilon_1^{2j}\epsilon_3, \gamma_{2j}, \gamma_{2j}\epsilon_1}$, \quad [$\epsilon_1^{-1} \textrm{($B_{j+1}^+$-VI)}$ if $j\le 3n-1$, $\epsilon_1^{-1} \textrm{($A$-IX)}$ if $j=3n$]
\item $\pbs{\epsilon_1, \epsilon_1^{2j-1}\epsilon_3, \gamma_{2j-2}\epsilon_1, \gamma_{2j-2}\epsilon_1^2}$, \quad [$\epsilon_1 \textrm{($B_{j-1}^+$-V)}$ if $j\ge 2$, $\epsilon_3 \textrm{($B_1^-$-VI)}$ if $j=1$]
\item $\pbs{1, \epsilon_1^{2j-1}\epsilon_3, \gamma_{2j-2}\epsilon_1, \gamma_{2j}}$, \quad [$\epsilon_1^{-1} \textrm{($B_j^+$-VIII)}$]
\item $\pbs{\epsilon_1, \epsilon_1^{2j}\epsilon_3, \gamma_{2j-2}\epsilon_1^2, \gamma_{2j}\epsilon_1}$. \quad [$\epsilon_1 \textrm{($B_j^+$-VII)}$]
\end{enumerate}
Here are the faces of the polytopes $B_j^-$ (and their matching faces):
\begin{enumerate}[label=($B_j^-$-\Roman*)]
\item $\pbs{1, \epsilon_1, \gamma_{2j-1}}$, \quad [\textrm{($B_{j-1}^-$-II)} if $j\ge 2$, $\epsilon_1^{-1}\epsilon_3^{-1} \textrm{($B_1^+$-III)}$ if $j=1$]
\item $\pbs{1, \epsilon_1, \gamma_{2j+1}}$, \quad [\textrm{($B_{j+1}^-$-I)} if $j\le 3n-1$, $\epsilon_2^{-1}\epsilon_3^{-1} \textrm{($A$-VII)}$ if $j=3n$]
\item $\pbs{\epsilon_1^{2j-1}\epsilon_3^{-1}, \epsilon_1^{2j}\epsilon_3^{-1}, \gamma_{2j-1}\epsilon_1}$, \quad [$\epsilon_1^2 \textrm{($B_{j-1}^-$-IV)}$ if $j\ge 2$, $\epsilon_1\epsilon_3^{-1} \textrm{($B_1^+$-I)}$ if $j=1$]
\item $\pbs{\epsilon_1^{2j-1}\epsilon_3^{-1}, \epsilon_1^{2j}\epsilon_3^{-1}, \gamma_{2j+1}\epsilon_1^{-1}}$, \quad [$\epsilon_1^{-2} \textrm{($B_{j+1}^-$-III)}$ if $j\le 3n-1$, $\epsilon_1^{-2}\epsilon_2^{-1}\epsilon_3^{-1} \textrm{($A$-VIII)}$ if $j=3n$]
\item $\pbs{1, \epsilon_1^{2j}\epsilon_3^{-1}, \gamma_{2j+1}\epsilon_1^{-1}, \gamma_{2j+1}}$, \quad [$\epsilon_1^{-1} \textrm{($B_{j+1}^-$-VI)}$ if $j\le 3n-1$, $\epsilon_1^{-1}\epsilon_2^{-1}\epsilon_3^{-1} \textrm{($A$-X)}$ if $j=3n$]
\item $\pbs{\epsilon_1, \epsilon_1^{2j-1}\epsilon_3^{-1}, \gamma_{2j-1}, \gamma_{2j-1}\epsilon_1}$, \quad [$\epsilon_1 \textrm{($B_{j-1}^-$-V)}$ if $j\ge 2$, $\epsilon_3^{-1} \textrm{($B_1^+$-VI)}$ if $j=1$]
\item $\pbs{1, \epsilon_1^{2j-1}\epsilon_3^{-1}, \gamma_{2j-1}, \gamma_{2j+1}\epsilon_1^{-1}}$, \quad [$\epsilon_1^{-1} \textrm{($B_j^-$-VIII)}$]
\item $\pbs{\epsilon_1, \epsilon_1^{2j}\epsilon_3^{-1}, \gamma_{2j-1}\epsilon_1, \gamma_{2j+1}}$. \quad [$\epsilon_1 \textrm{($B_j^-$-VII)}$]
\end{enumerate}
Here are the faces of the polytopes $C_j^+$ (and their matching faces):
\begin{enumerate}[label=($C_j^+$-\Roman*)]
\item $\pbs{\epsilon_1^{-1}, 1, \epsilon_1^{-j}\epsilon_2}$, \quad [\textrm{($C_{j+1}^+$-II)} if $j\le 6n$, $\epsilon_1^{-6n-2}\epsilon_3^{-1} \textrm{($A$-IV)}$ if $j=6n+1$]
\item $\pbs{\epsilon_1^{-1}, 1, \epsilon_1^{-j+1}\epsilon_2}$, \quad [\textrm{($C_{j-1}^+$-I)} if $j\ge 3$, $\epsilon_1^{-1} \textrm{($C_1^-$-I)}$ if $j=2$]
\item $\pbs{\epsilon_1^{-1}, \epsilon_1^{-j}\epsilon_2, \epsilon_1^{-j+1}\epsilon_2}$, \quad [$\epsilon_1^{-1} \textrm{($C_{j-1}^+$-IV)}$ if $j\ge 3$, $\epsilon_1^{-2} \textrm{($C_1^-$-IV)}$ if $j=2$]
\item $\pbs{1, \epsilon_1^{-j}\epsilon_2, \epsilon_1^{-j+1}\epsilon_2}$. \quad [$\epsilon_1 \textrm{($C_{j+1}^+$-III)}$ if $j\le 6n$, $\epsilon_1^{-6n-1}\epsilon_3^{-1} \textrm{($A$-III)}$ if $j=6n+1$]
\end{enumerate}
Here are the faces of the polytopes $C_j^-$ (and their matching faces):
\begin{enumerate}[label=($C_j^-$-\Roman*)]
\item $\pbs{1, \epsilon_1, \epsilon_1^{j-1}\epsilon_2}$, \quad [\textrm{($C_{j-1}^-$-II)} if $j\ge 2$, $\epsilon_1 \textrm{($C_2^+$-II)}$ if $j=1$]
\item $\pbs{1, \epsilon_1, \epsilon_1^j\epsilon_2}$, \quad [\textrm{($C_{j+1}^-$-I)} if $j\le 6n$, \textrm{($A$-I)} if $j=6n+1$]
\item $\pbs{1, \epsilon_1^{j-1}\epsilon_2, \epsilon_1^j\epsilon_2}$, \quad [$\epsilon_1^{-1} \textrm{($C_{j+1}^-$-IV)}$ if $j\le 6n$, $\epsilon_1^{-1} \textrm{($A$-II)}$ if $j=6n+1$]
\item $\pbs{\epsilon_1, \epsilon_1^{j-1}\epsilon_2, \epsilon_1^j\epsilon_2}$. \quad [$\epsilon_1 \textrm{($C_{j-1}^-$-III)}$ if $j\ge 2$, $\epsilon_1^2 \textrm{($C_2^+$-III)}$ if $j=1$]
\end{enumerate}
Since all faces are matched, the proposition is established.
\end{proof}

Now we are ready to prove \Cref{thm:pfi}.

\begin{proof}[Proof of Theorem $\ref{thm:pfi}$]
From \Cref{prp:pfi}, we see that the elements $1,\rho,\gamma_j$ are indeed indecomposable. On the other hand, \Cref{prp:pfs}, the sail $\mc S_K$ of $K$ is given by the union of the polytopes $A$, $B_j^\pm$, $C_j^\pm$ and their translates by totally positive units. Since the polytopes $A$, $B_j^\pm$, $C_j^\pm$ all have integer distance $1$ and admits unimodular triangulations, we use \Cref{thm:ii} and conclude that all the indecomposable elements lie on the sail $\mc S_K$. Finally, using the characterisation of these polytopes in the proof of \Cref{lem:pc}, we see that the polytopes $A$, $B_j^\pm$, $C_j^\pm$ contains only translates of $1,\rho,\gamma_j$ by totally positive units, and no other lattice points. So the list of indecomposable elements is complete.

Concerning the growth of $\iota(K)$ with respect to the discriminant, we use \eqref{eq:pd} and the fact that $x_n$ is well approximated by $(\frac{1+\sqrt{5}}2)^n$. Recalling that $w = \log (\frac{1+\sqrt{5}}2)$, it is easily verified that for $n\in\N_0$ we have
\[
(24n+6)w - \frac 15 \le \log (x_{24n+6}-3) \le (24n+6)w.
\]
It follows that 
\[
(48n+12)w - \frac 15 + \log 16 \le \log \Delta_K \le (48n+12)w + \log 16.
\]
A rough approximation of the constants above then gives a bound
\[
\frac{\log \Delta_K}{8w} \le \iota(K) \le \frac{\log \Delta_K}{8w} + 1. \qedhere
\]
\end{proof}

\newcommand{\etalchar}[1]{$^{#1}$}

\end{document}